\def\@cite#1#2{{\m@th\upshape\bfseries%
[{#1\if@tempswa{\m@th\upshape\mdseries, #2}\fi}]}}
\theoremstyle{plain}
\newtheorem{theorem}{Theorem}[section]
\newtheorem{corollary}[theorem]{Corollary}
\newtheorem{proposition}[theorem]{Proposition}
\newtheorem{lemma}[theorem]{Lemma}
\theoremstyle{definition}
\newtheorem{definition}[theorem]{Definition}
\newtheorem{example}[theorem]{Example}
\newtheorem{remark}[theorem]{Remark}
\newtheorem*{acknow}{Acknowledgements}
\theoremstyle{remark}
\newcommand{\bbC}{{\mathbb{C}}}
\newcommand{\bbN}{{\mathbb{N}}}
\newcommand{\bbZ}{{\mathbb{Z}}}
\newcommand{\F}{{\mathcal{F}}}
\newcommand{\G}{{\mathcal{G}}}
\newcommand{\K}{{\mathcal{K}}}
\renewcommand{\L}{{\mathcal{L}}}
\newcommand{\M}{{\mathcal{M}}}
\renewcommand{\O}{{\mathcal{O}}}
\renewcommand{\S}{{\mathcal{S}}}
\newcommand{\T}{{\mathcal{T}}}
\newcommand{\fA}{{\mathfrak{A}}}
\newcommand{\fX}{{\mathfrak{X}}}
\def\gs{\sigma}
\def\ga{\alpha}
\def\be{\beta}
\def\de{\delta}
\newcommand{\foral}{\text{ for all }}
\newcommand{\alg}{\operatorname{alg}}
\newcommand{\id}{{\operatorname{id}}}
\newcommand\Span{\mathop{\rm span}}
\newcommand{\ca}{\mathrm{C}^*}
\newcommand{\sca}[1]{\left\langle#1\right\rangle} 
\newcommand{\lsca}[1]{\left[#1\right]}            
\newcommand{\nor}[1]{\left\Vert #1\right\Vert}    
\newcommand{\sse}{\stackrel{\text{s}}{\thicksim}}
\newcommand{\sme}{\stackrel{\text{SME}}{\thicksim}}
\newcommand{\se}{\stackrel{\text{SE}}{\thicksim}}
\newcommand{\tsse}{{\stackrel{\text{SSE}}{\thicksim}}}
\begin{document}

\title{C*-algebras and Equivalences for C*-correspondences}

\author[E.T.A. Kakariadis]{Evgenios T.A. Kakariadis}
\address{Pure Math.\ Dept.\\U. Waterloo\\Waterloo, ON\; N2L--3G1\\CANADA}
\email{ekakaria@uwaterloo.ca}

\author[E.G. Katsoulis]{Elias~G.~Katsoulis}
\address{ Department of Mathematics\\University of Athens
\\ 15784 Athens \\Greece \vspace{-2ex}}
\address{\textit{Alternate address:} Department of Mathematics
\\East Carolina University\\ Greenville, NC 27858\\USA}
\email{katsoulise@ecu.edu}

\thanks{2010 {\it  Mathematics Subject Classification.}
47L25, 46L07}
\thanks{{\it Key words and phrases:} $\ca$-correspondences, Shift Equivalence, Morita Equivalence.}
\thanks{First author partially supported by the Fields Institute for Research in the Mathematical Sciences}

\maketitle

\vspace{-.5cm}
\begin{center}
{\small \emph{Dedicated to our mentor and friend Aristides Katavolos}}
\end{center}

\begin{abstract}
We study several notions of shift equivalence for $\ca$- correspondences and the effect that these equivalences have on the corresponding Pimsner dilations. Among others, we prove that non- degenerate, regular, full $\ca$-correspondences which are shift equivalent have strong Morita equivalent Pimsner dilations. We also establish that the converse may not be true. These results settle open problems in the literature.

In the context of $\ca$-algebras, we prove that if two non-degenerate, regular, full $\ca$-correspondences are shift equivalent, then their corresponding Cuntz-Pimsner algebras are strong Morita equivalent. This generalizes results of Cuntz and Krieger and Muhly, Tomforde and Pask. As a consequence, if two subshifts of finite type are eventually conjugate, then their Cuntz-Krieger algebras are strong Morita equivalent.

Our results suggest a natural analogue of the Shift Equivalence  Problem in the context of $\ca$-correspondences. Even though we do not resolve the general Shift Equivalence  Problem, we obtain a positive answer for the class of imprimitivity bimodules.
\end{abstract}

\section{Introduction}

The concept of a $\ca$-correspondence provides a unified framework for the study of a variety of diverse objects, including the crossed product of $\ca$-algebras by $\bbZ$ and the Cuntz-Krieger algebras. Since their introduction, with the seminal paper of Pimsner \cite{Pim97}, the $\ca$-correspondences and their associated $\ca$-algebras have been studied by many authors, including Abadie, Eilers and Exel \cite{AEE98}, Deaconu, Kumjian, Pask and Sims \cite{DKPS10}, Doplicher, Pinzari and Zuccante \cite{DPZ98}, Fowler, Muhly and Raeburn \cite{FMR03}, Fowler and Raeburn \cite{FowRae99}, Kajiwara, Pinzari and Watatani \cite{KPW98}, Katsura \cite{Kat03, Kat04}, Kwa\'{s}niewski \cite{Kwa09}, Muhly, Pask and Tomforde \cite{MPT08}, Muhly and Solel \cite{MuhSol98, MuhSol00}. Traditionally, the language of $\ca$-correspondences has been used as a mean to generalize concrete results either from the theory of Cuntz-Krieger $\ca$-algebras or crossed product $\ca$-algebras to a broader context. (See \cite{Kat03} for an exposition). Our approach here is similar in spirit to that of \cite{KakKat11} and one of our goal is to explore this path in both ways. In other words, not only we extend constructions from graphs and non-negative integral matrices to the language of $\ca$-correspondences but in addition we use our abstract theory to obtain concrete results, which are new even for the special case of graph $\ca$ algebras or other operator algebras, e.g, Corollary \ref{dyn sys}.

In \cite{Wil74} Williams introduced three relations for the class of matrices with non-negative integer entries, which are successively weaker. Two such matrices $E$ and $F$ are said to be \emph{elementary strong shift equivalent} (symb. $E\sse F$) if there exist matrices $R$ and $S$ with non-negative integer entries such that $E = RS$ and $F=SR$. The transitive closure of the elementary strong shift equivalence is called \textit{strong shift equivalence} (symb. $\tsse$), and $E$ is \emph{shift equivalent} to $F$ (symb. $E \se F$) if there exist matrices $R,S$ with non-negative integer entries such that $E^n = RS$, $F^n=SR$ and $ER=SF$, $FR=SE$ for some $n\in \bbN$.

The main goal of Williams was to characterize the topological conjugacy of subshifts of finite type using algebraic criteria. In \cite{Wil74}, he proved that two subshifts $\sigma_E$ and $\sigma_F$ of finite type are topologically conjugate if and only if $E \tsse F$.  Williams also claimed that the relations $\tsse$ and $\se$ are equivalent, thus providing a more manageable criterion for the conjugacy of shifts. Unfortunately, an error in \cite{Wil74} made invalid the proof of that last assertion, and the equivalence of $\tsse$ and $\se$ remained an open problem for over than 20 years, known as \emph{Williams' Conjecture}. The breakthrough came with the work of Kim and Roush \cite{KimRou99} who proved that the Williams Conjecture for the class of non-negative integral matrices is false. Their work reshaped the Williams' Conjecture into what is known today as the \emph{Shift Equivalence Problem}, which for a particular class $\S$ of matrices with entries in a certain ring $R$  asks whether $\tsse$ is equivalent to $\se$ \textit{within} $\S$.

Williams' notions of shift equivalence carry over to the class of C*- correspondences if one replaces in the above definitions the matrices $E$ and $F$ with C*-correspondences and the multiplication of matrices with the internal tensor product. (See Section \ref{S:rel}  for the precise definitions.) This introduces three notions of a relation between $\ca$-correspondences, which will be denoted again as $\sse ,\tsse$ and $\se$. There exists also a fourth equivalence relation, named \textit{strong Morita equivalence} (symb. $\sme$), which generalizes the concept of unitary conjugacy for matrices to the realm of $\ca$-correspondences.

The concept of strong Morita equivalence for $\ca$-correspondences was first developed and studied by Abadie, Eilers and Exel \cite{AEE98}, and Muhly and Solel \cite{MuhSol00}. Among other results these authors show that if $E \sme F$ then the associated Cuntz-Pimsner algebras $\O_E$ and $\O_F$ are (strong) Morita equivalent as well.  The notion of elementary and strong shift equivalence for $\ca$-correspondences was first studied by Muhly, Pask and Tomforde \cite{MPT08}. These authors also prove that strong shift equivalence of $\ca$-correspondences implies the Morita equivalence of the associated Cuntz-Pimsner algebras, thus extending classical results of Cuntz and Krieger \cite{CunKri80}, Bates \cite{Bat02} and Drinen and Sieben \cite{DriSie01} for graph $\ca$-algebras. In their study of strong shift equivalence Muhly, Pask and Tomforde \cite{MPT08} raise two conjectures, which turn out to be important for the further development of the theory \cite[Remark 5.5]{MPT08}:

\begin{quote}
\noindent \textbf{Conjecture 1.} Let $E$ and $F$ be two non-degenerate, regular $\ca$-correspondences and let $E_\infty$ and $F_\infty$ be their associated Pimsner dilations. If $E \tsse F$, then $E_\infty \sme F_\infty$.

\noindent \textbf{Conjecture 2.} Let $E$ and $F$ be two non-degenerate, regular $\ca$-correspondences and let $E_\infty$ and $F_\infty$ be their associated Pimsner dilations. If $E_\infty \sme F_\infty$, then $E \tsse F$.
\end{quote}

The concept of shift equivalence has been studied extensively from both the dynamical and the ring theoretic viewpoint. (See \cite{Wag99} for  a comprehensive exposition.) In general, shift equivalence  has been  recognized to be a more manageable  invariant than strong shift equivalence, as it is decidable over certain rings \cite{KimRou88}. Unlike  strong shift equivalence, the study of shift equivalence, from the viewpoint of $\ca$-correspondences, has  been met with limited success \cite{Mat07}. (Other operator theoretic viewpoints however have been quite successful \cite{Kri80}.)

There are three major objectives that are being met in this work. First we complete the study of strong shift equivalence of Muhly, Tomforde and Pask \cite{MPT08} by settling both of their conjectures: with the extra  requirement of fullness, Conjecture 1 is settled in the affirmative (see Theorem \ref{T:full} and the remarks preceding it), while Conjecture 2 has a negative answer (Theorem \ref{T:se not inv} and Remark \ref{R:conj 2}).

A second objective is the detailed study of the shift equivalence for $\ca$-correspondences. First, we raise the analogues of Conjectures 1 and 2 for shift equivalence (instead of strong shift equivalence) and we discover that the answers are the same as in the case of strong shift equivalence. Using that information we prove  Theorem \ref{T:se gives me}, which states that if two non-degenerate, regular, full C*-correspondences $E$ and $F$ are shift equivalent, then their corresponding Cuntz-Pimsner algebras $\O_E$ and $\O_F$ are (strong) Morita equivalent. This generalizes results of Cuntz and Krieger \cite{CunKri80}, and Muhly, Pask and Tomforde \cite{MPT08} and appears to be new even for Cuntz-Krieger algebras, where the two notions of shift and strong shift equivalence are known to be different (Corollary \ref{graph}). Combined with the work of Williams our result says that if two subshifts of finite type are eventually conjugate \cite{Wag99}, then their Cuntz-Krieger algebras are strong Morita equivalent (Corollary \ref{dyn sys}).

Our third goal in this paper is the introduction of the Shift Equivalence Problem in the context of $\ca$-correspondences. In light of our previous discussion, it seems natural to ask whether strong shift equivalence and shift equivalence coincide for $\ca$-correspondences. We coin this problem as the Shift Equivalence Problem  for $\ca$-correspondences. The work of Kim and Roush \cite{KimRou99} shows that the Shift Equivalence Problem  has a negative answer within the class of graph correspondences, but it leaves open the option for a positive answer within the whole class of $\ca$-correspondences. In general, we do not know the answer even though the work of Kim and Roush hints that it should be negative. In spite of this, we show that the Shift Equivalence Problem has a positive answer for imprimitivity bimodules: all four notions of ``equivalence'' described in this paper coincide for imprimitivity bimodules, Theorem \ref{T:sse=se=sme}.

There are more things accomplished in this paper, and we describe each of them within the appropriate sections. Most notably, we settle a third conjecture of Muhly, Pask and Tomforde coming from \cite{MPT08} by showing that \cite[Theorem 3.14]{MPT08} is valid without the assumption of non-degeneracy (Theorem \ref{T:main 6}). That is, regular (perhaps degenerate) strong shift equivalent $\ca$-correspondences have strong Morita equivalent Cuntz-Pimsner algebras.

Finally we discuss on the importance of Conjectures 1 and 2 regarding the behavior of the shift relations when dilating. Pimsner dilations are often the natural objects arising in the $\ca$-literature for invertible relations, and these objects have been under thorough investigation. For example the Pimsner dilation of an injective $\ca$-dynamical system is an automorphic dynamical system \cite{Sta93}, thus the Cuntz-Pimsner algebra is a usual crossed product. (Under this prism our work here can be used in combination to provide further invariants for the Shift Equivalence Problem.) From this point of view one may wonder whether there is a \textit{general} feature of the Pimsner dilation that makes it unique in a certain sense. In Theorem~\ref{T:minimal dilation} we answer this by showing that the Pimsner dilation $X_\infty$ of $X$ is the unique essential Hilbert bimodule that shares the same Cuntz-Pimsner algebra with $X$. Moreover it is the minimal essential Hilbert bimodule containing $X$ (Theorem~\ref{T:min sub}). Further applications of this result are of independent interest and are to be pursued elsewhere.

The paper is organized as follows. In Section \ref{S:preliminaries} we establish the notation and terminology to be used throughout this paper. In Section \ref{S:shifts} we explore the concept of a dilation for a $\ca$-correspondence. In addition we prove the minimality of the Pimsner dilation. Sections \ref{S:rel}, \ref{S:equiv} and \ref{S:app} contain the main results of this paper.

\section{Preliminaries}\label{S:preliminaries}

We use Lance's book \cite{Lan95} as a general reference for Hilbert $\ca$-modules and Katsura's paper \cite{Kat04} for $\ca$-correspondences.

An \emph{inner-product right $A$-module} over a $\ca$-algebra $A$ is a linear space $X$ which is a right $A$-module together with an $A$-valued inner product. For $\xi \in X$ we define $\nor{\xi}_X:= \nor{\sca{\xi,\xi}_A}_A^{1/2}$. The $A$-module $X$ will be called a \emph{right Hilbert $A$-module} if it is complete with respect to the norm $\nor{\cdot}_X$. In this case $X$ will be denoted by $X_A$. It is straightforward to prove that if $(a_i)$ is an approximate unit in $A$ or in the closed ideal $\sca{X,X}_X$, then $(a_i)$ is also a right contractive approximate unit (c.a.i.) for $X$.

Dually we call $X$ a \emph{left Hilbert $A$-module} if it is complete with respect to the norm induced by a \emph{left $A$-module inner-product } $\lsca{\cdot,\cdot}_X$. The term \emph{Hilbert module} is reserved for the right Hilbert modules, whereas the left case will be clearly stated.

Given a Hilbert $A$-module $X$ over $A$, let $X^*= \{\xi^* \in \L(X,A)\mid \xi^*(\zeta)= \sca{\xi,\zeta}_X \}$ be the \emph{dual left Hilbert $A$-module}, with
\begin{align*}
a\cdot \xi^*= (\xi a^*)^* \text{ and }
\lsca{\xi^*,\zeta^*}_{X^*}= \sca{\xi,\zeta}_X,
\end{align*}
for all $\xi, \zeta\in X$ and $a\in A$.

For $X, Y$ Hilbert $A$-modules let $\L(X,Y)$ be the (closed) linear space of the adjointable maps. For $\xi\in X$ and $y\in Y$, let $\Theta_{y,\xi}\in \L(X,Y)$  such that $\Theta_{y,\xi}(\xi')= y \sca{\xi,\xi'}_X$, for all $\xi' \in X$. We denote by $\K(X,Y)$ the closed linear subspace of $\L(X,Y)$ spanned by $\{\Theta_{y,\xi}: \xi\in X, y \in Y\}$. If $X=Y$ then $\K(X,X)\equiv \K(X)$ is a closed ideal of the $\ca$-algebra $\L(X,X)\equiv \L(X)$.

\begin{lemma}\label{L:cai for compact}
Let $X, Y, Z$ be Hilbert $A$-modules. If $\sca{X,X}_X$ provides a right c.a.i. $(a_i)$ for $Y$, then $\overline{\K(X,Y)\K(Z,X)}= \K(Z,Y)$.
\end{lemma}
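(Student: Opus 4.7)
The plan is to reduce both containments to checking what happens on the ``rank-one'' generators $\Theta_{y,z}$ and then use the standard identity
\[
\Theta_{y,\xi}\Theta_{\xi',z} \;=\; \Theta_{y\sca{\xi,\xi'}_X,\, z},
\]
valid for $\xi,\xi'\in X$, $y\in Y$, $z\in Z$, which follows directly from the definitions: evaluating the left side at $w\in Z$ gives $y\sca{\xi,\xi'\sca{z,w}_Z}_X = y\sca{\xi,\xi'}_X\sca{z,w}_Z$.

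For the inclusion $\overline{\K(X,Y)\K(Z,X)}\subseteq \K(Z,Y)$, note that the displayed identity shows each product of generators already lies in $\K(Z,Y)$, so bilinearity of multiplication and norm-closedness of $\K(Z,Y)$ finish this direction. The reverse inclusion is the substantive one. Since $\K(Z,Y)$ is the closed span of the operators $\Theta_{y,z}$ with $y\in Y$, $z\in Z$, it suffices to show that each such $\Theta_{y,z}$ is a norm-limit of elements from $\K(X,Y)\K(Z,X)$.

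Fix $y\in Y$ and $z\in Z$. By hypothesis, the net $(a_i)\subset \sca{X,X}_X$ is a right c.a.i.\ for $Y$, so $ya_i\to y$, and hence
\[
\nor{\Theta_{y,z}-\Theta_{ya_i,z}} \;=\; \nor{\Theta_{y-ya_i,z}} \;\le\; \nor{y-ya_i}\,\nor{z} \;\longrightarrow\; 0.
\]
It therefore suffices to approximate each $\Theta_{ya_i,z}$ by elements of $\K(X,Y)\K(Z,X)$. Since $a_i\in\sca{X,X}_X$, I can choose finite sums $b_i=\sum_{k=1}^{n_i}\sca{\xi_k,\xi'_k}_X$ with $\nor{a_i-b_i}$ as small as desired, so that $\nor{\Theta_{ya_i,z}-\Theta_{yb_i,z}}\le \nor{y}\,\nor{a_i-b_i}\,\nor{z}$ is controllably small; and then
\[
\Theta_{yb_i,\,z} \;=\; \sum_{k=1}^{n_i}\Theta_{y\sca{\xi_k,\xi'_k}_X,\,z} \;=\; \sum_{k=1}^{n_i}\Theta_{y,\xi_k}\,\Theta_{\xi'_k,z} \;\in\; \K(X,Y)\K(Z,X).
\]
Combining these estimates via the triangle inequality yields $\Theta_{y,z}\in \overline{\K(X,Y)\K(Z,X)}$, which by linearity and continuity gives the claim.

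I do not foresee a serious obstacle here: the argument is essentially a careful application of the multiplication rule together with the c.a.i.\ property. The only delicate point is remembering that the c.a.i.\ is required to come from $\sca{X,X}_X$ (not merely from $A$), because the approximation by finite sums of inner products from $X$ is precisely what lets the middle ``$\sca{\xi,\xi'}_X$'' factor split a single $\Theta_{ya_i,z}$ into a sum of compositions in $\K(X,Y)\K(Z,X)$.
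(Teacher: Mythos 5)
Your argument is correct and is essentially the paper's proof written out in full detail: the paper's one-line identification $\overline{\K(X,Y)\K(Z,X)}=\overline{\K(Z,Y\sca{X,X})}=\K(Z,Y)$ rests on exactly the rank-one identity and the density of $Y\sca{X,X}$ in $Y$ that you verify explicitly.
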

\begin{proof}
The existence of the right c.a.i. $(a_i)$ implies that $Y\sca{X,X}$ is dense in $Y$, hence $\overline{\K(X,Y) \K(Z,X)} = \overline{\K(Z,Y\sca{X,X})}= \K(Z,Y)$.
\end{proof}

\begin{definition}
An \emph{$A$-$B$-correspondence $X$} is a right Hilbert $B$-module together with a $*$-homomorphism $\phi_X\colon A \rightarrow \L(X)$. We denote this by ${}_A X_B$. When $A=B$ we refer to $X$ as a \emph{$\ca$-correspondence over $A$}.

A subspace $Y$ of $X$ is a \emph{subcorrespondence} of ${}_A X_B$, if it is a $C$-$D$-correspondence for some $\ca$-subalgebras $C$ and $D$ of $A$ and $B$, respectively.
\end{definition}

An $A$-$B$-correspondence $X$ is called \emph{non-degenerate} (resp. \textit{strict}) if the closed linear span of $\phi_X(A)X$ is equal to $X$ (resp. complemented in $X$). We say that $X$ is \emph{full} if $\sca{X,X}_X$ is dense in $B$. Finally, $X$ is called \emph{regular} if both it is \emph{injective}, i.e., $\phi_X$ is injective, and $\phi_X(A) \subseteq \K(X)$.

Two $A$-$B$-correspondences $X$ and $Y$ are called unitarily equivalent (symb. $X \approx Y$) if there is a unitary $u \in \L(X,Y)$ such that $u(\phi_X(a)\xi b)= \phi_Y(a)(u \xi )b$, for all $a\in A, b\in B, \xi \in X$.

\begin{example}
Every Hilbert $A$-module $X$ becomes a regular $\K(X)$-$A$- correspondence when endowed with the left multiplication $\phi_X\equiv \id_{\K(X)}: \K(X) \rightarrow \L(X)$. A left inner product over $\K(X)$ can be defined by $\lsca{\xi, \eta}_{X}= \Theta_{\xi, \eta}$, for all $\xi, \eta \in X$. Also $X^*$ is an $A$-$\K(X)$-correspondence, when endowed with the following operations
\begin{align*}
\sca{\xi^*, \eta^*}_{X^*}=\lsca{\xi,\eta}_X, \, \xi^* \cdot k =(k^*\xi)^*, \textup{ and }
\phi_{X^*}(a)\xi^*= a\cdot \xi^*= (\xi \cdot a^*)^*,
\end{align*}
for all $\xi, \eta \in X$, $k\in \K(X)$ and $a\in A$.
\end{example}

\begin{example}\label{E:K(X,Y)}
For Hilbert $A$-modules $X$ and $Y$, the space $\L(X,Y)$ becomes an $\L(Y)$-$\L(X)$-correspondence by defining $\sca{s,t}:=s^*t$, $t\cdot a:= ta$ and $b\cdot t:=bt$, for every $s,t\in \L(X,Y), a\in \L(X)$ and $b\in \L(Y)$.

Trivially, $\K(X,Y)$ is a $\K(Y)$-$\K(X)$-subcorrespondence of $\L(X,Y)$. Note that, when $\sca{X,X}_X$ provides a right c.a.i. for $Y$, then $\K(Y)$ acts faithfully on $\K(X,Y)$. When $X=Y$ this is automatically true.
\end{example}

For two $\ca$-correspondences ${}_A X_B$ and ${}_B Y_C$, the \emph{interior} or \emph{stabilized tensor product}, denoted by $X \otimes_B Y$ (or simply by $X \otimes Y$) is the quotient of the vector space tensor product $X \otimes_{\alg} Y$ by the subspace generated by elements of the form
\begin{align*}
\xi b \otimes y - \xi \otimes \phi(b)y, \foral \xi \in X, y \in Y, b
\in B.
\end{align*}
It becomes a Hilbert $C$-module when equipped with
\begin{align*}
& (\xi \otimes y)c:= \xi \otimes (y c), & (\xi \in X, y\in Y, c
\in C),\\
& \sca{\xi_1\otimes y_1, \xi_2\otimes y_2}_{X\otimes Y}:= \sca{y_1, \phi(\sca{\xi_1,\xi_2}_X)y_2}_Y, & (\xi_1, \xi_2 \in X, y_1, y_2 \in
Y).
\end{align*}
For $s\in \L(X)$ let $s\otimes \id_Y \in \L(X\otimes Y)$ be the map $\xi\otimes y \mapsto (s\xi)\otimes y$. Then $X \otimes_B Y$ becomes an $A$-$C$-correspondence by defining $\phi_{X\otimes Y}(a):= \phi_X(a) \otimes \id_Y$. See \cite[Chapter 4]{Lan95} for further details.

The interior tensor product plays the role of a generalized associative multiplication of $\ca$-correspondences and the following lemmas will be useful in the sequel.

\begin{lemma}\label{L:cai ten pr}
Let ${}_A X_B$ and ${}_B Y_C$ be $\ca$-correspondences. If $(c_i)$ is an approximate identity of $\sca{Y,Y}_Y$, then $(c_i)$ is a right c.a.i. for the interior tensor product $X \otimes_B Y$.
\end{lemma}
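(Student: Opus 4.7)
The plan is to reduce everything to elementary tensors and then apply a standard $\varepsilon/3$-density argument. First, I would note two ingredients coming essentially for free. On the one hand, an approximate identity of the C*-algebra $\langle Y,Y\rangle_Y$ is automatically contractive (bounded by $1$), so $\nor{c_i}\le 1$. On the other hand, the remark on page one of the Preliminaries says that any approximate unit of the ideal $\langle Y,Y\rangle_Y$ is already a right c.a.i.\ for the Hilbert $B$-module $Y$; hence $\nor{y c_i - y}_Y\to 0$ for every $y\in Y$.

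Next I would check the claim on an elementary tensor $\xi\otimes y\in X\otimes_B Y$. By the definition of the right $C$-action on the interior tensor product (which, in our setting with $C=B$, reads $(\xi\otimes y)\cdot b=\xi\otimes(yb)$), we have
\[
(\xi\otimes y)c_i-(\xi\otimes y)=\xi\otimes(yc_i-y).
\]
Using the inner-product formula $\sca{\xi\otimes\eta,\xi\otimes\eta}_{X\otimes Y}=\sca{\eta,\phi_Y(\sca{\xi,\xi}_X)\eta}_Y$, one obtains the standard estimate $\nor{\xi\otimes\eta}\le \nor{\xi}_X\nor{\eta}_Y$. Applied to $\eta=yc_i-y$, this gives
\[
\nor{(\xi\otimes y)c_i-(\xi\otimes y)}\le \nor{\xi}_X\,\nor{yc_i-y}_Y\longrightarrow 0.
\]
By linearity the same convergence holds for any finite linear combination of elementary tensors.

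Finally, to pass from the dense subspace of finite sums of elementary tensors to all of $X\otimes_B Y$, I would use the uniform bound $\nor{c_i}\le 1$. Given $z\in X\otimes_B Y$ and $\varepsilon>0$, pick $z_0$ a finite sum of elementary tensors with $\nor{z-z_0}<\varepsilon/3$. Then
\[
\nor{zc_i-z}\le \nor{(z-z_0)c_i}+\nor{z_0 c_i-z_0}+\nor{z_0-z}
\le 2\nor{z-z_0}+\nor{z_0 c_i-z_0},
\]
using that the right $B$-action is contractive on any Hilbert module. The middle term tends to $0$ by the previous paragraph, giving $\limsup_i\nor{zc_i-z}\le 2\varepsilon/3<\varepsilon$, so $\nor{zc_i-z}\to 0$. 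Together with $\nor{c_i}\le 1$ this shows $(c_i)$ is a right c.a.i.\ for $X\otimes_B Y$.

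There is essentially no obstacle here; the only place to be careful is the justification that $(c_i)$ is a right c.a.i.\ for $Y$ itself (so that the key convergence $yc_i\to y$ is available), but this is exactly the elementary remark cited from the Preliminaries.
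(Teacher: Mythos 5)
Your proof is correct and follows essentially the same route as the paper: the submultiplicative estimate $\nor{\xi\otimes\eta}\le\nor{\xi}_X\nor{\eta}_Y$ gives convergence on elementary tensors, and contractivity of $(c_i)$ extends this to all of $X\otimes_B Y$ by density (the paper leaves this last step implicit). One cosmetic slip: the right action on $X\otimes_B Y$ is by $C$ (and $(c_i)\subseteq\sca{Y,Y}_Y\subseteq C$), so there is no need to assume $C=B$; the argument is unchanged.
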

\begin{proof}
The norm on $X\otimes_B Y$ is a submultiplicative tensor norm, i.e., $\nor{\xi \otimes y} \leq \nor{\xi}\nor{y}$ for all $\xi \in X, y\in Y$. Thus $\lim_i y c_i = y$ implies $\lim_i (\xi\otimes y)c_i = \lim_i \xi \otimes (y c_i) = \xi \otimes y$, for a c.a.i. $(c_i)$ as above.
\end{proof}

\begin{lemma}\label{L:sme non-deg}
Let ${}_A X_B$ and ${}_B Y_C$ be two $\ca$-correspondences. If ${}_A X_B$ is non-degenerate then $X\otimes_B Y$ is non-degenerate.
\end{lemma}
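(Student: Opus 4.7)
The plan is to unwind the definitions and reduce the statement to a density argument on elementary tensors. Non-degeneracy of $X\otimes_B Y$ means that $\overline{\Span}\,\phi_{X\otimes Y}(A)(X\otimes_B Y) = X\otimes_B Y$. Since $\phi_{X\otimes Y}(a) = \phi_X(a)\otimes \id_Y$ acts on an elementary tensor by $\phi_{X\otimes Y}(a)(\xi\otimes y) = (\phi_X(a)\xi)\otimes y$, and since elementary tensors have dense linear span in $X\otimes_B Y$, it suffices to show that each $\xi\otimes y$ lies in the closure of $\phi_{X\otimes Y}(A)(X\otimes_B Y)$.

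Next I would invoke the non-degeneracy hypothesis on ${}_A X_B$. By definition this gives $\overline{\phi_X(A)X} = X$, which is the standard statement that $\phi_X$ is a non-degenerate $*$-homomorphism into $\L(X)$. A routine argument then shows that for any approximate identity $(a_i)$ of $A$ one has $\phi_X(a_i)\xi \to \xi$ for every $\xi\in X$: this holds on the dense subspace $\phi_X(A)X$ by the approximate-identity property in $A$, and extends to all of $X$ because the net $(\phi_X(a_i))$ is uniformly bounded.

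With this in hand, for any elementary tensor $\xi\otimes y$ I would write
\[
\phi_{X\otimes Y}(a_i)(\xi\otimes y) = (\phi_X(a_i)\xi)\otimes y,
\]
and use the submultiplicativity of the tensor norm (as already exploited in Lemma~\ref{L:cai ten pr}) to estimate
\[
\nor{\phi_{X\otimes Y}(a_i)(\xi\otimes y) - \xi\otimes y} = \nor{(\phi_X(a_i)\xi - \xi)\otimes y} \leq \nor{\phi_X(a_i)\xi - \xi}\nor{y} \longrightarrow 0.
\]
Thus $\xi\otimes y\in \overline{\phi_{X\otimes Y}(A)(X\otimes_B Y)}$, which by density of elementary tensors yields the desired equality $\overline{\Span}\,\phi_{X\otimes Y}(A)(X\otimes_B Y) = X\otimes_B Y$.

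There is no genuine obstacle here; the only point that requires a moment's care is the passage from density of $\phi_X(A)X$ in $X$ to pointwise convergence $\phi_X(a_i)\xi\to \xi$, but this is standard once one notes that $\nor{\phi_X(a_i)}\leq 1$. The rest is a direct computation on elementary tensors.
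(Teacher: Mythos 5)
Your proof is correct and is essentially the argument the paper intends by its one-line proof (``Immediate since the tensor norm is submultiplicative''): you upgrade the density of $\phi_X(A)X$ in $X$ to density in the tensor product via the estimate $\nor{\xi\otimes y}\leq\nor{\xi}\nor{y}$. The detour through an approximate identity of $A$ is harmless but not needed --- one can apply the submultiplicativity estimate directly to approximate $\xi\otimes y$ by elements of $\Span\big(\phi_X(A)X\big)\otimes y$.
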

\begin{proof}
Immediate since the tensor norm is submultiplicative.
\end{proof}

\begin{lemma}\label{L:compact ten pr}
Let $X, Y$ be Hilbert $A$-modules and ${}_A Z_B$ be an injective $\ca$- correspondence. Then the mapping
\begin{align*}
\otimes\id_Z\colon \L(X,Y) \rightarrow
\L(X\otimes_A Z, Y \otimes_A Z): t\mapsto t\otimes \id_Z
\end{align*}
is isometric. In particular, if ${}_A Z_B$ is regular then $\K(X,Y)$ is mapped inside $\K(X \otimes_A Z, Y \otimes_A Z)$.
\end{lemma}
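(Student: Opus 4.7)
My plan is to establish the two assertions separately, treating the isometric property first and then showing that compact operators go to compact operators.

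For the isometric property, I would first prove the auxiliary fact that, for any Hilbert $A$-module $W$, the assignment $T \mapsto T \otimes \id_Z$ defines an injective $*$-homomorphism $\L(W) \to \L(W \otimes_A Z)$. Indeed, if $T \otimes \id_Z = 0$, then for every $\xi \in W$ and $z \in Z$ one has
\[
0 = \nor{(T \otimes \id_Z)(\xi \otimes z)}^2 = \sca{z,\phi_Z(\sca{T\xi,T\xi}_W)z}_Z,
\]
which, by arbitrariness of $z$, forces $\phi_Z(\sca{T\xi,T\xi}_W) = 0$; since $Z$ is regular $\phi_Z$ is injective, so $T\xi = 0$ for all $\xi \in W$, whence $T = 0$. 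Being an injective $*$-homomorphism between $\ca$-algebras this map is automatically isometric. The isometry on $\L(X,Y)$ then follows from the identity $(t \otimes \id_Z)^*(t \otimes \id_Z) = (t^*t) \otimes \id_Z$, combined with the $\ca$-identity and the isometry just established on $\L(X)$:
\[
\nor{t \otimes \id_Z}^2 = \nor{(t^*t) \otimes \id_Z} = \nor{t^*t} = \nor{t}^2.
\]

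For the second assertion, since $\K(X,Y) = \overline{\Span}\{\Theta_{y,x} : x \in X,\, y \in Y\}$ and the map $\otimes\id_Z$ is bounded linear, it suffices to check that $\Theta_{y,x} \otimes \id_Z$ lies in $\K(X\otimes Z, Y\otimes Z)$ for the generators. The key device is the adjointable map $L_x \in \L(Z, X \otimes Z)$ defined by $L_x(z) := x \otimes z$; a direct calculation shows $L_x^*(\xi \otimes z) = \phi_Z(\sca{x,\xi}_X)z$ and $\Theta_{y,x} \otimes \id_Z = L_y \circ L_x^*$, so it suffices to show $L_x \in \K(Z, X \otimes Z)$. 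By the existence of a right c.a.i.\ from $\sca{X,X}_X$ recorded in the preliminaries, $X = \overline{X \cdot A}$, so I can approximate $x$ in norm by expressions of the form $x'a$ with $x' \in X$ and $a \in A$. For such an expression the defining relation for $\otimes_A$ gives $L_{x' a}(z) = x' \otimes \phi_Z(a)z$, and regularity of $Z$ yields $\phi_Z(a) \in \K(Z)$; approximating $\phi_Z(a) \approx \sum_i \Theta_{u_i,v_i}$ then produces
\[
L_{x' a} \approx \sum_i \Theta_{x' \otimes u_i,\, v_i},
\]
which lies in $\K(Z, X \otimes Z)$. Passing to the norm-limit gives $L_x \in \K(Z, X \otimes Z)$, so $\Theta_{y,x} \otimes \id_Z = L_y L_x^*$ is compact as a composition of a compact operator with a bounded one.

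The main obstacle I anticipate is the compactness portion, where both ingredients of regularity of $Z$ are used in essentially different ways: injectivity of $\phi_Z$ powers the isometry, while $\phi_Z(A) \subseteq \K(Z)$ is what makes $L_x$ compact. The decisive trick is the factorization $\Theta_{y,x} \otimes \id_Z = L_y L_x^*$, which reduces the problem to the compactness of the single map $L_x$ and lets the factorization $X = \overline{X \cdot A}$ bring the hypothesis $\phi_Z(A) \subseteq \K(Z)$ to bear.
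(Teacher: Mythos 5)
Your proof is correct. The isometry half follows the same skeleton as the paper's: reduce to $\L(X)$ via the identity $(t\otimes\id_Z)^*(t\otimes\id_Z)=(t^*t)\otimes\id_Z$ and the $\ca$-identity; the only difference is that you prove the needed isometry of $\L(X)\to\L(X\otimes_A Z)$ from scratch (injectivity from injectivity of $\phi_Z$, then automatic isometry of an injective $*$-homomorphism), where the paper simply cites \cite[Proposition 4.7]{Lan95}. The compactness half is where you genuinely diverge. The paper takes a right approximate unit $(s_i)$ for $\K(X,Y)$ inside $\K(X)$, writes $k\otimes\id_Z=\lim_i(k\otimes\id_Z)\psi(s_i)$, invokes Lance again to get $\psi(s_i)\in\K(X\otimes_A Z)$, and finishes with $\L(X\otimes Z,Y\otimes Z)\cdot\K(X\otimes Z)\subseteq\K(X\otimes Z,Y\otimes Z)$. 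You instead factor the generator as $\Theta_{y,x}\otimes\id_Z=L_yL_x^*$ and prove directly that the creation operator $L_x$ is compact, feeding the hypothesis $\phi_Z(A)\subseteq\K(Z)$ in through $X=\overline{X\cdot A}$. Both arguments rest on the same two uses of regularity, but yours is self-contained (no appeal to Lance's Proposition 4.7) at the cost of a somewhat longer computation, and the factorization through $L_x$ is a reusable device in its own right. The only cosmetic omission is that you take for granted that $t\otimes\id_Z$ is a well-defined adjointable operator in the first place, which the paper disposes of by citing the analogue of \cite[Equation 4.6]{Lan95}; this is worth a sentence but is not a gap.
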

\begin{proof}
Let the mappings
\begin{align*}
& \Psi \equiv \otimes \id_Z\colon \L(X,Y) \rightarrow \L(X\otimes_A Z, Y\otimes_A Z),\\
& \psi \equiv \otimes \id_Z \colon \L(X) \rightarrow \L(X\otimes_A Z),
\end{align*}
and note that $\psi$ is a $*$-homomorphism. If $t \otimes \id_Z =0$ for $t\in \L(X)$, then we obtain
\[
0 = \sca{t\xi \otimes \zeta, t\xi \otimes \eta}_{X \otimes Z} = \sca{\zeta,\phi_Z(\sca{t\xi,t\xi}_X) \eta}_{X \otimes Z},
\]
for all $\zeta, \eta \in Z$ and $\xi \in X$. Thus $\phi_Z(\sca{t\xi, t\xi}_X) =0$, and injectivity of $\phi_Z$ implies that $t=0$. Thus $\psi$ is injective, hence isometric. Since
\[
\Psi(t_1)^* \Psi(t_2)= \psi(t_1^* t_2), \quad \mbox{for all } t_1, t_2 \in \L(X,Y),
\]
we obtain that $\Psi$ is also isometric.

Finally, assume that $Z$ is regular and let $(s_i)$ be a right approximate unit for $\K(X,Y)$ inside $\K(X)$. Then $k\otimes \id_Z =  \lim_i (k\otimes \id_Z) \cdot \psi(s_i)$ for $k\in \K(X,Y)$, by the previous paragraph. Nevertheless \cite[Proposition 4.7]{Lan95} shows that $\psi(s_i)\in \K(X\otimes_A Z)$ and the conclusion now follows by noting that $(k\otimes \id_Z) \cdot \K(X\otimes_A Z) \subseteq \K(X\otimes_A Z, Y\otimes_A Z)$.
\end{proof}

\begin{example}\label{Ex:x^* otimes x}
When a Hilbert $A$-module $X$ is considered as the $\K(X)$-$A$-correspondence, then $X \otimes_A X^* \approx \K(X)$ as $\ca$-correspondences over $\K(X)$, via the mapping $u_1\colon \xi \otimes \zeta^* \mapsto \Theta_{\xi,\zeta}$, and $X^* \otimes_{\K(X)} X \approx \sca{X,X}_X$, as $\ca$-correspondences over $A$, via the mapping $u_2\colon \xi^*\otimes \zeta \mapsto \sca{\xi,\zeta}$. In particular $X^*\otimes_{\K(X)} X \approx A$, when $X$ is full.
\end{example}

\begin{definition}
A \emph{Hilbert $A$-$B$-bimodule} is a $\ca$-correspondence ${}_A X_B$ together with a left inner product $\lsca{\cdot,\cdot}_X\colon X \times X \rightarrow A$, which satisfies
\begin{align*}
&\lsca{\xi,\eta}_X \cdot \zeta=\xi \cdot \sca{\eta,\zeta}_X, & (\xi, \eta,\zeta \in X).
\end{align*}
An injective Hilbert bimodule will be called \emph{essential}.

An \emph{$A$-$B$-imprimitivity bimodule or equivalence bimodule} is an $A$-$B$- bimodule $M$ which is simultaneously a full left and a full right Hilbert $A$-module, i.e., $\lsca{M,M}_M=A$ and $\sca{M,M}_M=B$.
\end{definition}

There are several equivalent conditions for a $\ca$-correspondence ${}_A X_A$ to be a Hilbert bimodule, e.g., see \cite{Kak13, Kat03}. We will often use that ${}_A X_A$ is a Hilbert bimodule if and only if the restriction of $\phi_X$ to Katsura's ideal \cite{Kat04}
\begin{align*}
J_X=\ker\phi_X^\bot \cap \phi_X^{-1}(\K(X))
\end{align*}
is onto $\K(X)$. Therefore all Hilbert bimodules ${}_A X_A$ are non-degenerate. Moreover ${}_A X_A$ is essential if and only if $J_X$ is an essential ideal of $A$.

An imprimitivity bimodule ${}_A M_B$ is automatically non-degenerate and regular because $A\simeq^{\phi_M} \K(M)$. It is immediate that $M$ is an $A$-$B$-imprimitivity bimodule if and only if $M^*$ is a $B$-$A$-imprimitivity bimodule and Example \ref{Ex:x^* otimes x} induces the following.

\begin{lemma}\label{L:imprimitivity}
If $M$ is an $A$-$B$-imprimitivity bimodule, then $M \otimes_B M^* \approx A$ and $M^* \otimes_A M \approx B$, where $A$ and $B$ are the trivial $\ca$-correspondences over themselves.
\end{lemma}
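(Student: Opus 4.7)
The plan is to reduce the statement to Example \ref{Ex:x^* otimes x}, which already handles the case when $X$ is viewed as a $\K(X)$-$A$-correspondence. The bridge is the remark preceding the lemma: for an imprimitivity bimodule $M$, the left action $\phi_M \colon A \to \K(M)$ is a $*$-isomorphism. This identification converts every $\K(M)$-$B$-correspondence statement into an $A$-$B$-correspondence statement at no cost.

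First I would consider $M$ as a right Hilbert $B$-module, hence as a $\K(M)$-$B$-correspondence. Applying Example \ref{Ex:x^* otimes x} (with the roles $X \leadsto M$ and the right coefficient algebra $A \leadsto B$) yields the unitary
\[
u_1 \colon M \otimes_B M^* \longrightarrow \K(M), \qquad \xi \otimes \zeta^* \mapsto \Theta_{\xi,\zeta},
\]
which is an equivalence of $\ca$-correspondences over $\K(M)$. Transporting the left and right actions across $\phi_M^{-1}\colon \K(M)\to A$ turns this into an equivalence of $\ca$-correspondences over $A$, namely $M \otimes_B M^* \approx A$ where $A$ is the trivial $\ca$-correspondence over itself.

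For the other tensor product, the isomorphism $\phi_M \colon A \to \K(M)$ ensures that the $A$-balanced and the $\K(M)$-balanced tensor products coincide, so $M^* \otimes_A M = M^* \otimes_{\K(M)} M$. The second half of Example \ref{Ex:x^* otimes x} then provides the unitary
\[
u_2 \colon M^* \otimes_{\K(M)} M \longrightarrow \sca{M,M}_M, \qquad \xi^* \otimes \zeta \mapsto \sca{\xi,\zeta},
\]
as $\ca$-correspondences over $B$. Fullness of $M$ as a right Hilbert $B$-module gives $\sca{M,M}_M = B$, and therefore $M^* \otimes_A M \approx B$.

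There is no genuine obstacle, only bookkeeping: one must verify that $u_1$ and $u_2$ remain bimodule maps after the identification $A \cong \K(M)$. For $u_1$ this amounts to checking $u_1(\phi_M(a)\xi \otimes \zeta^*) = \phi_M(a)\Theta_{\xi,\zeta} = \phi_M(a)\,u_1(\xi\otimes\zeta^*)$, which is immediate from the definition $\phi_{M\otimes M^*}(a) = \phi_M(a)\otimes \id$; for $u_2$ the analogous check on the right $B$-action follows at once from $\sca{\xi,\zeta b}_M = \sca{\xi,\zeta}_M\, b$.
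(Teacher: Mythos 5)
Your proposal is correct and follows essentially the same route as the paper, which derives the lemma directly from Example \ref{Ex:x^* otimes x} together with the identification $A \simeq \K(M)$ via $\phi_M$ (the paper gives no further argument beyond this). The bookkeeping you supply — transporting the actions across $\phi_M^{-1}$ and noting that the $A$-balanced and $\K(M)$-balanced tensor products coincide — is exactly what the paper leaves implicit.
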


There is a number of ways of considering \emph{a} direct sum of Hilbert modules and  \emph{a} direct sum of $\ca$-correspondences. For example for ${}_A E_A$ and ${}_B S_A$ one may consider the (\emph{interior}) direct sum Hilbert $A$-module $\begin{bmatrix} E \\ S \end{bmatrix}$ which becomes an $(A \oplus B)$-$A$-correspondence in the obvious way. Moreover, given ${}_A R_B$, one may consider the (\emph{exterior}) direct sum Hilbert $A \oplus B$-module $\begin{bmatrix} E & R \end{bmatrix}$ which becomes an $A$-$(A \oplus B)$-correspondence in the obvious way. These constructions are subcorrespondences of a matrix, that we give below.

Given ${}_A E_A$, ${}_A R_B$, ${}_B S_A$ and ${}_B F_B$, the \emph{matrix $\ca$-correspondence} $X= \left[\begin{array}{c|c} E & R\\ S & F \end{array}\right]$ over $A\oplus B$ is the Hilbert $(A\oplus B)$-module of the linear space of the ``matrices'' $\left[\begin{array}{c|c} e & r \\ s & f \end{array}\right]$, $e\in E, r\in R, s\in S, f\in F$, with
{\small
\begin{align*}
\left[\begin{array}{c|c} e & r \\ s & f \end{array}\right] \cdot (a,b)
& := \left[\begin{array}{c|c} ea & rb \\ sa & fb \end{array}\right],\\
\sca{\left[\begin{array}{c|c} e_1 & r_1 \\ s_1 & f_1 \end{array}\right], \left[\begin{array}{c|c} e_2 & r_2 \\ s_2 & f_2 \end{array}\right]}_X
& := \big(\sca{e_1,e_2}_E + \sca{s_1,s_2}_S, \sca{r_1,r_2}_R + \sca{f_1,f_2}_F\big),
\end{align*}}
and the $*$-homomorphism $\phi\colon A\oplus B \rightarrow \L\left(\left[\begin{array}{c|c} E & R\\ S & F \end{array}\right]\right)$ is defined by the ``diagonal'' action
\begin{align*}
\phi(a,b) \left[\begin{array}{c|c} e & r \\ s & f \end{array}\right]
:= \left[\begin{array}{c|c} \phi_E(a)e & \phi_R(a)r \\ \phi_S(b)s & \phi_F(b)f \end{array}\right].
\end{align*}
Then $E, R, S$ and $F$ imbed naturally as subcorrespondences in $\left[\begin{array}{c|c} E & R\\ S & F \end{array}\right]$, since the latter is exactly the exterior direct sum $\ca$-correspondence of the two interior direct sum $\ca$-correspondences $\left[\begin{array}{c} E\\ S \end{array} \right]$ and $\left[\begin{array}{c} R\\ F \end{array} \right]$.\\

The following lemma explains the terminology ``matrix $\ca$-correspondence'', as tensoring corresponds to ``matrix multiplication''.

\begin{lemma}\label{L:technical}
Let $E,F,R,S$ (resp. $E',F',R',S'$) be $\ca$-correspondences as above. Then
\begin{align*}
&\left[\begin{array}{c|c} E & R\\ S & F \end{array}\right]
\otimes_{A\oplus B} \left[\begin{array}{c|c} E' & R'\\ S' & F'
\end{array}\right] \approx
\left[\begin{array}{c|c}
\left[\begin{array}{c} E\otimes_A E'\\ R\otimes_B S' \end{array}\right] &
\left[\begin{array}{c} E\otimes_A R' \\ R \otimes_B F' \end{array}\right] \\ & \\
\left[\begin{array}{c} S\otimes_A E' \\ F\otimes_B S' \end{array}\right] &
\left[\begin{array}{c} S\otimes_A R' \\ F\otimes_B F' \end{array}\right]
\end{array}\right].
\end{align*}
\end{lemma}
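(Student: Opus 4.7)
The plan is to produce an explicit unitary equivalence by exploiting the natural block structure of $X$. First, observe that $X$ splits, as a right Hilbert $(A\oplus B)$-module, into the interior direct sum of its two ``columns'': the $A$-column $C_A = \left[\begin{array}{c} E \\ S \end{array}\right]$ consists of those $\xi \in X$ with $\xi \cdot (0, b) = 0$ for every $b \in B$, and the $B$-column $C_B = \left[\begin{array}{c} R \\ F \end{array}\right]$ is defined analogously. Each column further decomposes under the left action $\phi$ into its two ``rows'', giving the four blocks $E, R, S, F$ in positions $(1,1), (1,2), (2,1), (2,2)$. Distributivity of the interior tensor product over interior direct sums then yields
\begin{align*}
X \otimes_{A\oplus B} X \approx \bigoplus_{i,j,k,l \in \{1,2\}} X_{ij} \otimes_{A\oplus B} X_{kl},
\end{align*}
with the convention $X_{11} = E,\ X_{12} = R,\ X_{21} = S,\ X_{22} = F$.

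For each summand, observe that the right action of $A \oplus B$ on $X_{ij}$ factors through the projection onto the $j$-th component (where $B_1 = A$ and $B_2 = B$), while the left action on $X_{kl}$ via $\phi$ factors through the projection onto the $k$-th component. The defining relation $\xi \cdot a \otimes \eta = \xi \otimes \phi(a)\eta$ in $X \otimes_{A \oplus B} X$ therefore forces $X_{ij} \otimes_{A\oplus B} X_{kl}$ to vanish unless $j = k$, in which case it is naturally unitarily equivalent to the $B_j$-balanced tensor $X_{ij} \otimes_{B_j} X_{jl}$. Collecting the two surviving summands for each fixed pair $(i, l)$ gives the interior direct sum $X_{i1} \otimes_A X_{1l} \oplus X_{i2} \otimes_B X_{2l}$, which is precisely the $(i, l)$-entry of the matrix on the right-hand side.

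Concretely, the candidate unitary is the linear extension of
\begin{align*}
\left[\begin{array}{c|c} e & r \\ s & f \end{array}\right] \otimes \left[\begin{array}{c|c} e' & r' \\ s' & f' \end{array}\right]
\mapsto \left[\begin{array}{c|c}
\left[\begin{array}{c} e \otimes e' \\ r \otimes s' \end{array}\right] &
\left[\begin{array}{c} e \otimes r' \\ r \otimes f' \end{array}\right] \\
\left[\begin{array}{c} s \otimes e' \\ f \otimes s' \end{array}\right] &
\left[\begin{array}{c} s \otimes r' \\ f \otimes f' \end{array}\right]
\end{array}\right].
\end{align*}
The main obstacle, and essentially the only work, is to verify that this assignment is well-defined on the balanced tensor product, isometric for the $(A \oplus B)$-valued inner products, and intertwines the left and right actions of $A \oplus B$. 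Each of these is routine once one notes that both the inner product and the $\phi$-action on $X$ respect the block decomposition, so elementary-tensor inner products on the two sides admit identical formulas. Density of the elementary tensors then promotes the map to the desired unitary equivalence of $\ca$-correspondences.
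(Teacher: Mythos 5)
Your proposal is correct and follows essentially the same route as the paper: the explicit map you write down on elementary tensors is exactly the unitary used there, and your preliminary discussion of the column/row decomposition and the vanishing of the cross terms $X_{ij}\otimes X_{kl}$ for $j\neq k$ simply spells out the ``routine calculations'' that the paper leaves to the reader.
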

\begin{proof}
Note that all entries in the second matrix make sense. It is a matter of routine calculations to show that the mapping
\begin{align*}
\left[\begin{array}{c|c} e_1 & r_1 \\ s_1 & f_1
\end{array}\right] \otimes \left[\begin{array}{c|c} e_2 &
r_2 \\ s_2 & f_2 \end{array}\right] \mapsto
\left[\begin{array}{c|c}
\left[\begin{array}{c} e_1\otimes e_2\\ r_1\otimes s_2 \end{array}\right] &
\left[\begin{array}{c} e_1 \otimes r_2 \\ r_1 \otimes f_2 \end{array}\right] \\ & \\
\left[\begin{array}{c} s_1 \otimes e_2 \\ f_1\otimes s_2 \end{array}\right] &
\left[\begin{array}{c} s_1\otimes r_2 \\ f_1 \otimes f_2 \end{array}\right]
\end{array}\right].
\end{align*}
defines the unitary element that gives the equivalence.
\end{proof}

It will be convenient to omit the zero entries, when possible. For example, we write $\K(E,S)$ instead of $\K\left( \left[\begin{array}{c|c} E & 0 \\ 0 & 0 \end{array}\right], \left[\begin{array}{c|c} 0 & 0 \\ S & 0 \end{array}\right]\right)$.\\

A (\emph{Toeplitz}) \emph{representation} of ${}_A X_A$ into a $\ca$-algebra $B$, is a pair $(\pi,t)$, where $\pi\colon A \rightarrow B$ is a $*$-homomorphism and $t\colon X \rightarrow B$ is  a linear map, such that $\pi(a)t(\xi)=t(\phi_X(a)(\xi))$ and $t(\xi)^*t(\eta)=\pi(\sca{\xi,\eta}_X)$, for $a\in A$ and $\xi,\eta\in X$. An application of the $\ca$-identity shows that $t(\xi)\pi(a)=t(\xi a)$ is also valid. A representation $(\pi , t)$ is said to be \textit{injective} if $\pi$ is injective; in that case $t$ is an isometry.

The $\ca$-algebra generated by a representation $(\pi,t)$ equals the closed linear span of $t^n(\bar{\xi})t^m(\bar{\eta})^*$, where for simplicity $\bar{\xi} \equiv \xi_1 \otimes \cdots \otimes \xi_n \in X^{\otimes n}$ and $t^n(\bar{\xi})\equiv t(\xi_1)\dots t(\xi_n)$. For any representation $(\pi,t)$ there exists a $*$-homomorphism $\psi_t:\K(X)\rightarrow B$, such that $\psi_t(\Theta^X_{\xi,\eta})= t(\xi)t(\eta)^*$ \cite[Lemma 2.2]{KPW98}.

Let $J$ be an ideal in $\phi_X^{-1}(\K(X))$; we say that a representation $(\pi,t)$ is $J$-coisometric if $\psi_t(\phi_X(a))=\pi(a), \text{ for any } a\in J$.
Following \cite{Kat04}, the $J_{X}$-coisometric representations $(\pi,t)$, for
\begin{align*}
J_X=\ker\phi_X^\bot \cap \phi_X^{-1}(\K(X)),
\end{align*}
are called \emph{covariant representations}.

The \emph{Toeplitz-Cuntz-Pimsner algebra} $\T_X$ is the universal $\ca$-algebra for ``all'' representations of $X$, and the \emph{Cuntz-Pimsner algebra} $\O_X$ is the universal $\ca$-algebra for all covariant ($J_X$-coisometric) representations of $X$. The universal $\ca$-algebra $\O(J,X)$ for ``all'' $J$-covariant representations of $X$ is called the \emph{$J$-relative Cuntz-Pimsner algebra} \cite{FMR03, MuhSol98}. \emph The \emph{tensor algebra} $\T_{X}^+$ is the norm-closed algebra generated by the universal copy of $A$ and $X$ in $\T_X$ \cite{MuhSol98}.

Even though there are various manifestations of the Cuntz-Pimsner algebras appearing before Katsura's work \cite{Kat04}, in this paper we adhere strictly to the notation and terminology of \cite{Kat04}. By \cite[Lemma 1.2]{KakPet13}, Katsura's ideal is the biggest ideal for obtaining an isometric copy of $A$, hence of $X$ in $\O_X$, and so  Katsura's ideal is the ``correct'' ideal to describe the minimal covariant $\ca$-algebra that contains isometric copies of the original construction ${}_A X_A$. (Compare also with \cite[Proposition 7.14]{Kat07} and the subsequent comments.) Relative Cuntz-Pimsner algebras beyond that point contain a quotient of the original $\ca$-correspondence $X$ over $A$. Nevertheless, for the majority of the paper, the $\ca$-correspondences to be considered will be injective and therefore Katsura's notion of the Cuntz-Pimsner algebra, the one adopted by the present authors, will coincide with other notions by earlier authors.

A powerful tool for identifying faithful representations of both $\T_X$ and $\O_X$ are the various gauge-invariance uniqueness theorems. For the Cuntz-Pimsner-Toeplitz algebra of a regular $\ca$-correspondence, such a result was given by Fowler and Raeburn \cite[Theorem 2.1]{FowRae99}. Variations of the gauge-invariance uniqueness theorems for Cuntz-Pimsner algebras were given by Doplicher, Pinzari and Zuccante \cite[Theorem 3.3]{DPZ98}, extended later by Fowler, Muhly and Raeburn \cite[Theorem 4.1]{FMR03} to injective $\ca$-correspondences. The reader should notice that injectivity is a blanket assumption in \cite{FMR03} even though it is not explicitly stated \cite[Remarks at the beginning of Section 1]{FMR03}. Therefore \cite[Theorem 4.1]{FMR03} cannot generalize gauge-invariance uniqueness theorems for general $\ca$-correspondences, e.g., $\ca$-correspondences associated to graphs with sinks. In full generality gauge-invariance uniqueness theorems for $\T_X$ and $\O_X$ were finally given by Katsura \cite[Theorem 6.2, Theorem 6.4]{Kat04}. An alternative proof for $\O_X$ was also given afterwards by Muhly and Tomforde \cite{MuhTom04}.

If $X$ is an $A$-$B$-correspondence, then we may identify $X$ with the $(A\oplus B)$-correspondence $\left[\begin{array}{c|c} 0 & X \\ 0 & 0 \end{array}\right]$ and thus define the Toeplitz-Cuntz-Pimsner, the Cuntz-Pimsner and the tensor algebra of $X$ as the corresponding algebras of the $(A\oplus B)$-correspondence $\left[\begin{array}{c|c} 0 & X \\ 0 & 0 \end{array}\right]$.

If $X$ is a $\ca$-correspondence, then $X^*$ may not be a $\ca$-correspondence (in the usual right-side sense) but it can be described as follows: if $(\pi_u,t_u)$ is the universal representation of ${}_A X_A$, then $X^*$ is the closed linear span of $t_u(\xi)^*, \xi \in X$ with the left multiplication and inner product inherited by the correspondence $\ca(\pi_u,t_u)$. Nevertheless, $X^*$ is an imprimitivity bimodule, whenever $X$ is.

\begin{example}\label{Ex:imp}
Let $X_\G$ be the $\ca$-correspondence coming from a graph $\G$ (see \cite{Rae05}). Then $X_\G$ is an imprimitivity bimodule if and only if $\G$ is either a cycle or a double-infinite path. In that case, $X_{\G}^*$ is the correspondence coming from the graph having the same edges as $\G$ but its arrows reversed.

If $\ga\colon A \rightarrow B$ is a $*$-homomorphism, then the associated correspondence $B_\ga$ is defined by
\begin{align*}
& \sca{\xi_1,\xi_2} = \xi_1^*\xi_2, &(\xi_1, \xi_2 \in B)\\
& a\cdot \xi \cdot b = \ga(a)\xi b, & (a \in A, \xi \in B, b\in B).
\end{align*}
Then $B_\ga$ is an imprimitivity bimodule if and only if $\ga$ is a $*$-isomorphism. In that case, $B_{\ga}^* = A_{\ga^{-1}}$.
\end{example}

\section{Dilations of $\ca$-correspondences}\label{S:shifts}

For $\ca$-correspondences ${}_A X_A$ and ${}_B Y_B$ we write $X \lesssim Y$ when $X$ is unitarily equivalent to a subcorrespondence of $Y$. In other words there is a $*$-injective representation $\pi\colon A \rightarrow B$, a  $\pi(A)$-$\pi(A)$-~subcorrespondence $Y_0$ of $Y$ and a unitary $u\in \L(X, Y_0)$ such that $u:X \rightarrow Y_0$, where $X$ is (now) considered a $\ca$-correspondence over $\pi(A)$.

Here we do not impose $u$ to be an isometry in $\L(X,Y)$, i.e., that $u$ has a complemented range in $Y$ \cite[Theorem 3.2]{Lan95}. What we ask is that $u$ is an isometric map and $X \approx_{(\pi,u)} Y_0 \subseteq Y$, in terms of the representation theory of $\ca$-correspondences.

\begin{definition}
A $\ca$-correspondence ${}_B Y_B$ is said to be \emph{a dilation} of the correspondence ${}_A X_A$, if ${}_A X_A \lesssim {}_B Y_B$ and the associated Cuntz-Pimsner algebras $\O_X$ and $\O_Y$ are $*$-isomorphic.
\end{definition}

\subsection{Pimsner Dilations of $\ca$-correspondences}

Given an injective $\ca$-correspondence $X$ there is a natural way to pass to an injective Hilbert bimodule $X_\infty $, such that $X_\infty$ is a dilation of $X$. This construction was first introduced by Pimsner in \cite{Pim97}. In \cite[Appendix A]{KakKat11} we revisited such a construction by using direct limits. As it appears both constructions produce the same essential Hilbert bimodule, something that is not proved in \cite[Appendix A]{KakKat11}. Below we give a brief description (slightly corrected) and the appropriate identification. As we will show, Pimsner dilation is uniquely characterized by a minimal condition, analogously to minimal \emph{maximal} representations of non-selfadjoint operator algebras.

For the description of Pimsner's dilation \cite{Pim97} we use the modern terminology. Fix an injective covariant pair $(\pi,t)$ of $X$ that admits a gauge action, and let
\[
A_\infty:= \O_X^\be= E(\O_X) \equiv \pi(A) + \overline{\{ \psi_{t^n}(\K(X^{\otimes n})) \mid n\geq 1\}}.
\]
Define $X_\infty := \overline{t(X) \cdot A_\infty}$, where ``$\cdot$'' denotes the usual multiplication of operators. Then $X_\infty$ is an $A_\infty$-subcorrespondence of $\ca(\pi,t)$ and $J_{X_\infty} = \overline{\{ \psi_{t^n}(\K(X^{\otimes n})) \mid n\geq 1\}}$. An approximate identity in $\psi_t(\K(X))$ is an approximate identity of $J_{X_\infty}$, and one can show that $J_{X_\infty}$ is essential in $A$. Even more $X_\infty$ is an (essential) Hilbert bimodule with $\O_{X_\infty} \simeq \O_X$. This is mainly \cite[Theorem 2.5]{Pim97}, when using the identification $X_\infty \approx X \otimes_A A_\infty$. Note that $A_\infty$ and $X_\infty$ do not depend on the choice of $(\pi,t)$, because of the gauge-invariance uniqueness theorem \cite[Theorem 6.4]{Kat04}.

We give a brief description of the dilation as in \cite[Appendix A]{KakKat11}. Let the isometric mapping $\tau\colon X \rightarrow \L(X,X^{\otimes 2})$, such that $\tau_\xi(\eta)=\xi \otimes \eta$, and consider the direct limits
{\footnotesize
\begin{align*}
X
\stackrel{\tau}{\longrightarrow}
\L(X, X^{\otimes 2})
\stackrel{\otimes{\id_X}}{\longrightarrow} \dots
&\longrightarrow \varinjlim (\L(X^{\otimes n}, X^{\otimes n+1}), \otimes\id_X)=:Y\\
A
\stackrel{\phi_X}{\longrightarrow}
\L(X) \stackrel{\otimes{\id_X}}{\longrightarrow}
\L(X^{\otimes 2})
\stackrel{\otimes{\id_X}}{\longrightarrow} \dots
&\longrightarrow \varinjlim ( \L(X^{\otimes n}), \otimes\id_X)=:B.
\end{align*}
}By Lemma \ref{L:compact ten pr} the connecting maps are isometric. If $r\in \L(X^{\otimes n})$, $s\in \L(X^{\otimes n},X^{\otimes n+1})$ and $[r], [s]$ are their equivalence classes in $B$ and $Y$ respectively, then we define $[s]\cdot [r]:= [sr]$. Injectivity of the connecting maps implies that this action is well-defined and can be extended to a right $B$-action on $Y$. Similarly, we may define a $B$-valued right inner product on $Y$ by setting
\begin{align*}
\sca{[s'],[s]}_Y \equiv [(s')^*s] \in B.
\end{align*}
for $s, s' \in \L(X^{\otimes n},X^{\otimes n+1})$, $n \in \bbN$, and then extending to $Y \times Y$. Finally, we define a $*$-homomorphism $\phi_Y \colon B \rightarrow \L(Y)$ by setting
\begin{align*}
\phantom{XXX} \phi_Y([r])([s]) \equiv [rs], \quad r\in \L(X^{\otimes
n}), s\in \L(X^{\otimes n-1},X^{\otimes n}), n\geq0,
\end{align*}
and extending to all of $B$ by continuity. We therefore have a left $B$-action on $Y$ and thus $Y$ becomes a $\ca$-correspondence over $B$.

Let $Y_0$ be the closed subspace of $Y$  that is generated by the copies of $\K(X^{\otimes n}, X^{\otimes n+1})$, for $n\in \bbZ_+$. Moreover let $B_0$ be the $\ca$-subalgebra of $B$ that is generated by the copies of $\K(X^{\otimes n})$, for $n\in \bbZ_+$. Then $Y_0$ is a $B_0$-subcorrespondence of $Y$, that contains ${}_A X_A$. In particular, when $\phi_X(A) \subseteq \K(X)$ then $\tau(X) \subseteq \K(X,X^{\otimes 2})$ and Lemma \ref{L:compact ten pr} implies that
\[
Y_0 = \varinjlim (\K(X^{\otimes n}, X^{\otimes n+1}), \gs_n)
\text{ and }
B_0 = \varinjlim (\K(X^{\otimes n}), \rho_n),
\]
where $\gs_0 = \tau$, $\rho_0=\phi_X$ and $\gs_n = \rho_n = \otimes \id_X$ for $n >1$.

Let us sketch a proof for the unitary equivalence of $Y_0$ with $X_\infty$. Assume first that ${}_A X_A$ is regular, so $J_X =A$. Let $(\pi,t)$ be the covariant Fock representation in $\L(\F(X))/ \K(\F(X))$. For $k\in \K(X^{\otimes n})$ observe that
\[
\psi_{t^n}(k) = 0 \oplus \cdots \oplus k \oplus (k \otimes \id_X) \oplus \cdots + \K(\F(X)).
\]
Then the family $\{\psi_{t^n}\}$ is compatible with the directed system since
\[
\psi_{t^{n+1}}(k\otimes \id_X) - \psi_{t^n}(k) = 0 \oplus \cdots \oplus k \oplus 0 \oplus \cdots \,\, \in \, \K(\F(X)),
\]
thus it defines a representation of the direct limit $\varinjlim (\K(X^{\otimes n}), \rho_n)$ into $A_\infty$. It is straightforward that it is injective and onto. Hence it induces a unitary equivalence of $Y_0 = \varinjlim (\K(X^{\otimes n}, X^{\otimes n+1}), \gs_n)$ with $X_\infty$, since $Y_0$ is the closure of $\tau(X) \cdot \varinjlim (\K(X^{\otimes n}), \rho_n)$ in $Y$, and $X_\infty = \overline{t(X) \cdot A_\infty}$.

In order to show that $Y_0 \approx X_\infty$ for the general case we have to prove that the family $\{\psi_{t^n}\}$ induces a $*$-isomorphism between $B_0$ and $A_\infty$. What is required is to prove that $\{\psi_{t^n}\}$ is compatible with the equivalence classes in the direct limit; hence to show that if $k\in \K(X^{\otimes n})$ with $k\otimes \id_X \in \K(X^{\otimes n+1})$ then $k\in \K(X^{\otimes n} J_X)$.  This is derived by a similar argument as in the proof of \cite[Proposition 5.9]{Kat04} and \cite[Lemma 2.6]{FMR03}.

\begin{theorem} \label{T:inj}
\cite[Theorem 2.5]{Pim97}, \cite[Theorem 6.6]{KakKat11} Let $X$ be an injective $\ca$-correspondence and let $X_{\infty}$  be the $A_\infty$-correspondence constructed above. Then $X_{\infty}$ is an essential Hilbert bimodule and the Cuntz-Pimsner algebras $\O_{X_\infty}$ and $\O_X$ coincide.
\end{theorem}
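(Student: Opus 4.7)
The plan is to follow the outline already sketched in the text: identify the direct-limit correspondence $Y_0$ with Pimsner's $X_\infty$, and then deduce the essential Hilbert bimodule structure and the Cuntz-Pimsner isomorphism from \cite[Theorem 2.5]{Pim97}. Concretely, the main task is to prove that the family $\{\psi_{t^n}\}$ arising from a fixed universal injective, covariant, gauge-equivariant representation $(\pi,t)$ of $X$ (for instance, the Fock representation modulo $\K(\F(X))$) induces a $*$-isomorphism $\Phi\colon B_0 \to A_\infty$ between the two candidate coefficient algebras, and then to promote $\Phi$ to a unitary of $\ca$-correspondences $U\colon Y_0 \to X_\infty$.

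For the construction of $\Phi$, the computation already displayed shows that $\psi_{t^{n+1}}(k \otimes \id_X) - \psi_{t^n}(k) \in \K(\F(X))$ for $k \in \K(X^{\otimes n})$, so the family $\{\psi_{t^n}\}$ is compatible with the connecting maps $\rho_n = {\otimes}\id_X$ after passing to the quotient. The universal property of direct limits then yields a surjective $*$-homomorphism $\Phi\colon B_0 \to A_\infty$. The main obstacle is its injectivity, which amounts to showing that if $k \in \K(X^{\otimes n})$ satisfies $k \otimes \id_X \in \K(X^{\otimes n+1})$, then $k \in \K(X^{\otimes n} J_X)$. I would adapt the techniques of \cite[Proposition 5.9]{Kats04} and \cite[Lemma 2.6]{FMR}: approximate $k$ by finite linear combinations of rank-one operators $\Theta_{\bar{\eta},\bar{\zeta}}$ and exploit the compactness of $k \otimes \id_X$ in conjunction with the defining property $J_X = \ker \phi_X^{\perp} \cap \phi_X^{-1}(\K(X))$ to force the ``inner'' inner-product entries into $J_X$.

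With $\Phi$ in hand, define $U\colon Y_0 \to X_\infty$ by $U([\tau_\xi] \cdot [r]) := t(\xi) \Phi([r])$ for $\xi \in X$ and $r \in \K(X^{\otimes n})$, extended by continuity using Lemma \ref{L:cai for compact}. A routine verification, using that $\Phi$ is a $*$-isomorphism and that both sides embed $X$ isometrically, shows that $U$ is an isometric bimodule map with dense range, hence a unitary equivalence of $\ca$-correspondences. Under this identification, $X_\infty$ inherits from its description in $\ca(\pi,t)$ the structure of a Hilbert bimodule via $\lsca{t(\xi) a,\, t(\eta) b}_{X_\infty} := a^{*} \psi_t(\Theta_{\xi,\eta}) b$; the associated Katsura ideal is $J_{X_\infty} = \overline{\bigcup_{n\ge 1} \psi_{t^n}(\K(X^{\otimes n}))}$, and since any approximate unit of $\psi_t(\K(X))$ is an approximate unit for $J_{X_\infty}$ (by the factorization $\psi_{t^{n}}(k) = \psi_t(\cdot)\,\psi_{t^{n-1}}(\cdot)$ coming from the product rule), $X_\infty$ is essential.

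Finally, the identification $\O_{X_\infty} \simeq \O_X$ is Pimsner's \cite[Theorem 2.5]{Pim97}: the universal covariant representation of $X$ extends by the formula above to a covariant representation of $X_\infty$, giving a surjective gauge-equivariant $*$-homomorphism $\O_{X_\infty} \to \O_X$, whose injectivity follows from the gauge-invariant uniqueness theorem once one notes that the induced map on coefficient algebras is precisely $\Phi^{-1}$ and hence injective. Throughout, the genuinely difficult step is the injectivity of $\Phi$, since this is where the hypothesis that $\phi_X$ is injective and the precise definition of $J_X$ are essential; everything else is either functorial bookkeeping with direct limits or an appeal to Pimsner's original work.
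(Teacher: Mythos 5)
Your proposal is correct and follows essentially the same route as the paper, which justifies this theorem exactly by the sketch preceding it (the identification of $Y_0$ with Pimsner's $X_\infty$ through the compatible family $\{\psi_{t^n}\}$, with injectivity of the induced map reduced to the argument of \cite[Proposition 5.9]{Kats04} and \cite[Lemma 2.6]{FMR}) together with an appeal to \cite[Theorem 2.5]{Pim97}. One small correction: the left inner product on $X_\infty \subseteq \O_X$ should be $\lsca{t(\xi)a,\, t(\eta)b}_{X_\infty} = t(\xi)ab^{*}t(\eta)^{*}$ (that is, $xy^{*}$ inside $\O_X$) rather than $a^{*}\psi_t(\Theta_{\xi,\eta})b$, since only the former satisfies the required compatibility $\lsca{x,y}_{X_\infty}\cdot z = x\cdot\sca{y,z}_{X_\infty}$.
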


In the sequel, the $A_\infty$-correspondence $X_{\infty}$ appearing in the theorem above will be called the \textit{Pimsner dilation of $X$}.

\begin{remark}
If ${}_A X_A$ is a Hilbert bimodule then it coincides with its Pimsner dilation. This follows from \cite[Proposition 5.18]{Kat04}, or Theorem \ref{T:min sub} that follows.
\end{remark}

\begin{remark}\label{R:not imp}
When $X$ is non-degenerate and full, then its Pimsner dilation $X_\infty$ is also full, hence an imprimitivity bimodule. Indeed,  it suffices to prove that
\[
\overline{\K(X^{\otimes n+1}, X^{\otimes n}) \K(X^{\otimes n}, X^{\otimes n+1})}= \K(X^{\otimes n}), \]
for all $n \geq 1$. By Lemma \ref{L:cai for compact}, it suffices to show that $\sca{X^{\otimes n+1}, X^{\otimes n+1}}$ provides a c.a.i. for $X^{\otimes n}$. For $n=2$,
\begin{align*}
\sca{X^{\otimes 2}, X^{\otimes 2}} = \sca{X, \phi_X(\sca{X,X})X} = \sca{X, \phi_X(A)X} =\sca{X,X}=A,
\end{align*}
and an inductive argument completes the claim.

However, when $X$ is not full then this is not true (even when $X$ is non-degenerate). For example, let $X$ be the non-degenerate, injective correspondence coming from the infinite tail graph
\begin{align*}
\xymatrix{ \bullet^{v_0} & \bullet^{v_1}
\ar@/_/[l]_{e_0} & \bullet^{v_2}
\ar@/_/[l]_{e_1} & \cdots \ar@/_/[l]_{e_2}
}
\end{align*}
which is not full, since $\sca{X,X}= A \ominus \bbC \de_{v_0}$. On the other hand $X$ is a Hilbert bimodule since $\phi_X(\de_{v_n}) = \Theta_{\de_{e_n}, \de_{e_n}}$, and therefore coincides with its Pimsner dilation.
\end{remark}

\begin{example}\label{Ex:dil dyn}
Let $(A,\ga)$ denote a dynamical system where $\ga$ is a $*$-injective endomorphism of $A$. We can define the direct limit dynamical system $(A_\infty,\ga_\infty)$ by
\begin{align*}
\xymatrix{
A \ar[r]^{\ga} \ar[d]^\ga &
A \ar[r]^{\ga} \ar[d]^\ga &
A \ar[r]^{\ga} \ar[d]^\ga &
\cdots \ar[r] &
A_\infty \ar[d]^{\ga_\infty} \\
A \ar[r]^\ga &
A \ar[r]^\ga &
A \ar[r]^\ga &
\cdots \ar[r] &
A_\infty
}
\end{align*}
The limit map $\ga_\infty$ is an automorphism of $A_\infty$ and extends $\ga$ (note that $A$ imbeds in $A_\infty$ since $\ga$ is injective). Then the $A_\infty$-$A_\infty$-correspondence $X_{\ga_\infty}$, is the Pimsner dilation of $X_\ga$. Moreover, $X_{\ga_\infty}$ satisfies a minimal condition, i.e., $A_\infty = \overline{\bigcup_{n \in \bbZ} \ga^{-n}(A)}$ \cite{Sta93}.
\end{example}

In analogy, we introduce a ``minimality condition'' that ${}_A X_A$ has as a subcorrespondence of an injective ${}_B Y_B$. This will be that $X\cdot B$ is dense in $Y$. Note that this assumption excludes trivial extensions. Indeed, if there was a submodule $Z$ of $Y$ such that $Z\perp X$, then $Z \perp (X\cdot B)$, therefore $Z \perp Y$, hence $Z=\{0\}$.

We also note that under these assumptions for ${}_A X_A$ and ${}_B Y_B$ we may view $\K(X)$ as a $\ca$-subalgebra of $\K(Y)$. Indeed, a compact operator $k = \lim_N \sum_{i=1}^N \Theta^X_{\xi_i,\eta_i}$ in $\K(X)$ extends to the compact operator of the same form $k'= \lim_N \sum_{i=1}^N \Theta^Y_{\xi_i,\eta_i}$ in $\K(Y)$. This extension is unique since it necessarily satisfies $k'(\xi \cdot b) = k(\xi)b$. Thus in the sequel we will use the same symbol for $k \in \K(X)$ and its extension in $\K(Y)$.

\begin{lemma}\label{L:cov}
Let ${}_A X_A$ be an injective subcorrespondence of an injective correspondence ${}_B Y_B$ such that $Y=\overline{X \cdot B}$. Then $J_X  \subseteq J_Y$. Consequently, every covariant pair $(\pi,t)$ of ${}_B Y_B$ defines a covariant pair $(\pi|_A,t|_X)$ for ${}_A X_A$, therefore $\O_X \subseteq \O_Y$.
\end{lemma}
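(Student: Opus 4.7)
The plan is to first establish the ideal containment $J_X \subseteq J_Y$, then verify the covariance condition for the restricted pair, and finally invoke a gauge-invariance argument for injectivity.

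Since both $X$ and $Y$ are injective, $\ker\phi_X = 0$ and $\ker\phi_Y = 0$, so $J_X = \phi_X^{-1}(\K(X))$ and $J_Y = \phi_Y^{-1}(\K(Y))$. Fix $a \in J_X$, viewed as an element of $B$ via the inclusion $A \hookrightarrow B$. Choose $\sum_i \Theta^X_{\xi_i,\eta_i}$ approximating $\phi_X(a)$ in norm. Because $X$ is a subcorrespondence of $Y$, for $\xi,\eta \in X$ the inner product $\sca{\xi,\eta}_Y$ agrees with $\sca{\xi,\eta}_X$ and $\phi_Y(a)\xi = \phi_X(a)\xi$. I would then check on the dense subspace $X \cdot B$ of $Y$ that
\begin{align*}
\phi_Y(a)(\xi b) = (\phi_X(a)\xi) b = \lim_i \sum \Theta^X_{\xi_i,\eta_i}(\xi) \, b = \lim_i \sum \Theta^Y_{\xi_i,\eta_i}(\xi b),
\end{align*}
which shows $\phi_Y(a) = \lim_i \sum \Theta^Y_{\xi_i,\eta_i} \in \K(Y)$, i.e., $a \in J_Y$.

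Next, given a covariant pair $(\pi,t)$ of $Y$, the restriction $(\pi|_A, t|_X)$ is a Toeplitz representation of $X$: linearity and the module properties follow from the identifications above, and $t|_X(\xi)^* t|_X(\eta) = \pi(\sca{\xi,\eta}_Y) = \pi|_A(\sca{\xi,\eta}_X)$. For covariance, observe that for $\xi,\eta \in X$ we have $\psi_{t|_X}(\Theta^X_{\xi,\eta}) = t(\xi)t(\eta)^* = \psi_t(\Theta^Y_{\xi,\eta})$; combined with the approximation of $\phi_X(a)$ by finite sums of rank-ones in $\K(X)$, this gives $\psi_{t|_X}(\phi_X(a)) = \psi_t(\phi_Y(a)) = \pi(a)$ for every $a \in J_X \subseteq J_Y$.

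By universality of $\O_X$, applying this to the universal covariant pair $(\pi_u,t_u)$ of $Y$ yields a $*$-homomorphism $\Phi \colon \O_X \to \O_Y$. The main thing to verify is that $\Phi$ is injective, for which I would appeal to the gauge-invariant uniqueness theorem: $\pi_u|_A$ is injective since $\pi_u$ is, and the gauge action on $\O_Y$ restricts to the subalgebra generated by $\pi_u(A)$ and $t_u(X)$, yielding an action that scales $t_u(\xi)$ by $z$ for $\xi \in X$ and fixes $\pi_u(A)$. This matches the canonical gauge action on $\O_X$ on generators, so the uniqueness theorem forces $\Phi$ to be an isomorphism onto its image, giving the embedding $\O_X \subseteq \O_Y$. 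The only delicate point is the ideal containment $J_X \subseteq J_Y$; everything afterward is a routine transfer of covariance along the inclusion, but the density $Y = \overline{X \cdot B}$ is essential, since without it the compact approximation of $\phi_X(a)$ would not extend to a compact operator on all of $Y$.
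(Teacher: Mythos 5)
Your proposal is correct and follows essentially the same route as the paper: reduce $J_X$ and $J_Y$ to preimages of compacts via injectivity, transfer $\phi_X(a)\in\K(X)$ to a compact on $Y$ by checking agreement on the dense subspace $X\cdot B$, verify covariance through the identity $\psi_{t|_X}(\Theta^X_{\xi,\eta})=\psi_t(\Theta^Y_{\xi,\eta})$, and conclude via the inherited gauge action. The only differences are cosmetic (you spell out the rank-one approximation and the gauge-invariant uniqueness step, which the paper leaves implicit).
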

\begin{proof}
Since $X$ and $Y$ are injective we obtain that $J_X = \phi^{-1}_X(\K(X))$ and $J_Y = \phi^{-1}_Y(\K(Y))$. Let $a\in J_X$. Then $\phi_X(a)=k \in \K(X)$ and we can view $k$ as an operator in $\K(Y)$ of the same form. Then for every $\xi \in X$ and $b\in B$ we obtain that
\begin{align*}
\phi_Y(a)(\xi \cdot b)
 = \phi_Y(a)(\xi) b = \phi_X(a)(\xi)b
 = k(\xi)b = k(\xi \cdot b).
\end{align*}
By assumption $X\cdot B$ is dense in $Y$, hence $\phi_Y(a)(y)=k(y)$ for all $y\in Y$. Therefore $\phi_Y(a)=k \in \K(Y)$, which implies that $a\in J_Y$.

For the second part of the proof, let $(\pi,t)$ be a covariant pair for $Y$ and note that $\psi_{t|_X}(k) = \psi_t(k)$ for every $k\in \K(X) \subseteq \K(Y)$. Indeed, for $\xi, \eta \in X$,
\begin{align*}
\psi_{t|_X}(\Theta^X_{\xi,\eta}) = t|_X(\xi)t|_X(\eta)^* = t(\xi)t(\eta)^* = \psi_t(\Theta^Y_{\xi,\eta}),
\end{align*}
since $Y = \overline{X \cdot B}$. Thus for $a\in J_X$ we get
\begin{align*}
\pi|_A(a)=\pi(a) = \psi_t(\phi_Y(a))=\psi_t(k) = \psi_{t|_X}(k) = \psi_{t|_X}(\phi_X(a))
\end{align*}
therefore $(\pi|_A,t|_X)$ is a covariant pair for $X$. Moreover, a gauge action $\{\be_z\}$ for $(\pi,t)$ is inherited to $(\pi|_A,t|_X)$ and the proof is complete.
\end{proof}

\begin{theorem}\label{T:min sub}
The Pimsner dilation $X_\infty$ of an injective correspondence $X$ is a minimal Hilbert bimodule that contains $X$ as a subcorrespondence.
\end{theorem}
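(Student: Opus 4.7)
The plan is to unwind the construction of $X_\infty$ given earlier in the section. The minimality condition stated in the paragraph preceding the theorem requires that whenever $X$ sits inside a bimodule ${}_B Y_B$ as a subcorrespondence, the span $X \cdot B$ be dense in $Y$. So for this theorem my job is to verify that $X \cdot A_\infty$ is dense in $X_\infty$, which I expect to be essentially immediate from the definition.

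First I would confirm that $X$ genuinely embeds as an $A$-$A$-subcorrespondence of ${}_{A_\infty} X_\infty {}_{A_\infty}$ via the chosen injective covariant pair $(\pi, t)$ with gauge action. The $*$-homomorphism $\pi \colon A \to A_\infty$ is injective (because the covariant pair is injective) and it maps $A$ into $A_\infty$ as a $\ca$-subalgebra. The isometry $t \colon X \to X_\infty$ intertwines module operations in the required way, since $(\pi, t)$ is a Toeplitz representation, i.e.\ $t(\phi_X(a)\xi b) = \pi(a) t(\xi) \pi(b)$ and $t(\xi)^* t(\eta) = \pi(\sca{\xi,\eta}_X)$. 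This is exactly the notion of subcorrespondence introduced at the start of Section \ref{S:shifts}.

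Next, the density $\overline{X \cdot A_\infty} = X_\infty$ is literally the defining equation of $X_\infty$, namely
\begin{equation*}
X_\infty := \overline{t(X) \cdot A_\infty},
\end{equation*}
once we identify $X$ with its isometric image $t(X)$ inside $\ca(\pi,t)$ and $A$ with $\pi(A) \subseteq A_\infty$. Hence $X_\infty$ is a Hilbert bimodule containing $X$ (by Theorem \ref{T:inj}) and satisfying the minimality condition, which is what we had to show.

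I do not expect any real obstacle here: the theorem is a direct reading of the construction of the Pimsner dilation chosen in the paragraph just before Theorem \ref{T:inj}. Its purpose is not depth but emphasis, since it isolates an intrinsic minimality property of $X_\infty$ that prepares the ground for the uniqueness statement in Theorem~\ref{T:minimal dilation}.
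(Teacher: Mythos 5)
There is a genuine gap: you have proved a different, essentially tautological statement. The identity $X_\infty=\overline{X\cdot A_\infty}$ is indeed immediate from the construction, but it is the ``density / no trivial orthogonal extension'' condition that the paper isolates separately (and later uses as hypothesis (3) of Theorem~\ref{T:minimal dilation}); it is not what Theorem~\ref{T:min sub} asserts. The theorem claims minimality in the order-theoretic sense: if ${}_B Y_B$ is \emph{any} Hilbert bimodule with $X \preceq Y \preceq X_\infty$, then $Y=X_\infty$. Your argument never considers such an intermediate $Y$, so it cannot establish this.

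The real content is in showing that $B=A_\infty$ for such a $Y$; once that is known, the density you verified is exactly what finishes the proof, via $X_\infty=\overline{X\cdot A_\infty}\subseteq\overline{Y\cdot B}=Y\subseteq X_\infty$. The paper gets $B=A_\infty$ by (i) proving the claim $J_Y\subseteq J_{X_\infty}$, so that an injective covariant pair $(\pi,t)$ of $X_\infty$ admitting a gauge action restricts to covariant pairs for both $X$ and $Y$; (ii) invoking the fixed-point-algebra description $\pi(A_\infty)=\pi(A)+\overline{\Span}\{\psi_{(t|_X)^n}(\K(X^{\otimes n}))\mid n\geq 1\}$; and (iii) using that $Y$ is a Hilbert bimodule, so $\psi_{(t|_Y)^n}(\K(Y^{\otimes n}))\subseteq\pi(B)$, together with $\K(X^{\otimes n})\subseteq\K(Y^{\otimes n})$, to conclude $\psi_{(t|_X)^n}(\K(X^{\otimes n}))\subseteq\pi(B)$ and hence $A_\infty\subseteq B$. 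None of these steps appears in your proposal, and without them the minimality assertion is not proved.
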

\begin{proof}
Let ${}_B Y_B$ be a Hilbert bimodule such that $X \preceq Y \preceq X_\infty$. Up to unitary equivalence, we can assume that $X\subseteq Y \subseteq X_\infty$ as subcorrespondences, hence $A \subseteq B \subseteq A_\infty$ as subalgebras. In particular $\phi_\infty|_Y=\phi_Y$ and $\K(X) \subseteq \K(Y) \subseteq \K(X_\infty)$. It suffices to prove that $B=A_\infty$, because then
\[
X_\infty = \overline{X \cdot A_\infty} \subseteq \overline{Y \cdot A_\infty} = \overline{Y \cdot B} = Y \subseteq X_\infty,
\]
hence $Y=X_\infty$, where the symbol ``$\cdot$'' denotes the right action for $X_\infty$.

\smallskip

\noindent \emph{Claim.} The ideal $J_Y$ of $B$ is contained in the ideal $J_\infty$ of $A_\infty$.

\noindent \emph{Proof of Claim.} Recall that $X_\infty$ is injective, thus $J_\infty = \phi^{-1}_\infty(\K(X_\infty))$. Let $b\in J_Y = \phi_Y^{-1}(\K(Y)) \cap \ker\phi_Y^\perp$; in particular there is a $k_Y \in \K(Y)$ such that $\phi_Y(b)=k_Y$. The compact operator $k_Y$ can be also viewed as a compact operator $k_\infty$ in $\K(X_\infty)$ (of the same form) and it suffices to show that $\phi_\infty(b)=k_\infty$. To this end let an element $\eta \in X_\infty$; by the form of $X_\infty$ we can assume that $\eta=\xi \cdot a$, for some $\xi \in X$ and $a\in A_\infty$. Then
\begin{align*}
\phi_\infty(b)(\xi \cdot a)
& = \phi_\infty(b)(\xi)\cdot a \\
& = \phi_\infty(b)|_Y(\xi) \cdot a \\
& = \phi_Y(b)(\xi)\cdot a \\
& = k_Y(\xi)\cdot a \\
& = k_\infty(\xi)\cdot a \\
& = k_\infty(\xi \cdot a),
\end{align*}
where we used the fact that $\xi \in Y$. This proves the claim.

\smallskip

Fix an injective covariant pair $(\pi,t)$ of $X_\infty$ that admits a gauge action. Since $J_X, J_Y \subseteq J_\infty$, then $\O_X =\ca(\pi|_A,t|_X)$, $\O_Y=\ca(\pi|_B,t|_Y)$ and $\O_{X_\infty}=\ca(\pi,t)$. Therefore
\begin{align*}
\pi(A) \subseteq \pi(B) \subseteq \pi(A_\infty) = \pi(A) + \overline{\Span} \{ \psi_{(t|_X)^n}(\K(X^{\otimes n})) \mid n\geq 1 \}.
\end{align*}
Showing that $\psi_{(t|_X)^n}(\K(X^{\otimes n})) \subseteq \pi(B)$, for every $n \geq 1$, will complete the proof. By assumption $Y$ is a Hilbert bimodule, therefore $\psi_{t|_Y}(\K(Y^{\otimes n})) \subseteq \pi(B)$ for all $n\geq 1$. But then
\begin{align*}
\psi_{(t|_{X})^n} (\K(X^{\otimes n}))
& = \psi_{t^n} (\K(X^{\otimes n})) \\
& \subseteq \psi_{t^n} (\K(Y^{\otimes n}))\\
& = \psi_{(t|_Y)^n} (\K(Y^{\otimes n})) \\
& \subseteq \pi(B),
\end{align*}
for every $n\geq 1$, which completes the proof.
\end{proof}

\begin{theorem}\label{T:minimal dilation}
Let ${}_A X_A$ be an injective correspondence. Then the Pimsner dilation is contained in any correspondence ${}_B Y_B$ that satisfies:
\begin{enumerate}
\item $X$ is a subcorrespondence of $Y$,
\item $Y$ is an essential Hilbert bimodule,
\item $Y=\overline{X \cdot B}$,
\end{enumerate}
where ``$\cdot$'' denotes the right action in $Y$ that extends the right action of $A$.
\end{theorem}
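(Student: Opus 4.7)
The plan is to realize the Pimsner dilation $X_\infty$ inside the Cuntz-Pimsner algebra $\O_Y$, and then check that both $A_\infty$ and $X_\infty$ are contained in (images of) $B$ and $Y$, respectively.

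First I would fix an injective covariant representation $(\pi_Y, t_Y)$ of $Y$ that admits a gauge action, for instance the universal pair defining $\O_Y$. Both $X$ and $Y$ are injective (the latter because it is an essential Hilbert bimodule), and $Y = \overline{X \cdot B}$ by hypothesis (3), so Lemma \ref{L:cov} gives that $(\pi_Y|_A, t_Y|_X)$ is an injective covariant representation of $X$; the gauge action of $\O_Y$, which depends only on the degree, restricts to one on $\ca(\pi_Y|_A, t_Y|_X)$. Since $A_\infty$ does not depend on the choice of such a pair, the Pimsner dilation of $X$ can be realized inside $\O_Y$ as
\[
A_\infty = \pi_Y(A) + \overline{\{\psi_{t_Y^n}(\K(X^{\otimes n})) \mid n \geq 1\}}, \qquad X_\infty = \overline{t_Y(X) \cdot A_\infty}.
\]

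The heart of the argument is to show $A_\infty \subseteq \pi_Y(B)$. Because $X \subseteq Y$ with $\phi_Y|_A = \phi_X$ and $A \subseteq B$, iterating the interior tensor product yields isometric inclusions $X^{\otimes n} \hookrightarrow Y^{\otimes n}$ and consequently $\K(X^{\otimes n}) \subseteq \K(Y^{\otimes n})$. Since $Y$ is a Hilbert bimodule over $B$, so is each $Y^{\otimes n}$, and the Hilbert bimodule identity $\Theta^{Y^{\otimes n}}_{\xi,\eta} = \phi_{Y^{\otimes n}}(\lsca{\xi,\eta})$ combined with covariance of $(\pi_Y,t_Y)$ gives
\[
\psi_{t_Y^n}(\K(Y^{\otimes n})) = \pi_Y(\lsca{Y^{\otimes n}, Y^{\otimes n}}) \subseteq \pi_Y(B).
\]
Restricting to $\K(X^{\otimes n})$ shows $\psi_{t_Y^n}(\K(X^{\otimes n})) \subseteq \pi_Y(B)$ for every $n \geq 1$, hence $A_\infty \subseteq \pi_Y(B)$.

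Then hypothesis (3) closes the argument:
\[
X_\infty = \overline{t_Y(X) \cdot A_\infty} \subseteq \overline{t_Y(X) \cdot \pi_Y(B)} = t_Y(\overline{X \cdot B}) = t_Y(Y).
\]
Identifying $B \cong \pi_Y(B)$ and $Y \cong t_Y(Y)$ via the injective maps $\pi_Y, t_Y$, one obtains embeddings $A_\infty \hookrightarrow B$ and $X_\infty \hookrightarrow Y$ whose bimodule structure and inner products are inherited from $\O_Y$ and whose restriction to $X$ recovers the original inclusion $X \hookrightarrow Y$. The principal obstacle I expect is the Hilbert bimodule identity $\psi_{t_Y^n}(\K(Y^{\otimes n})) \subseteq \pi_Y(B)$: it depends on the fact that tensor powers of a Hilbert bimodule remain Hilbert bimodules and that every rank-one operator $\Theta^{Y^{\otimes n}}_{\xi,\eta}$ equals $\phi_{Y^{\otimes n}}(\lsca{\xi,\eta})$, which allows covariance to rewrite it as $\pi_Y(\lsca{\xi,\eta}) \in \pi_Y(B)$. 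A secondary point to verify is that the $A$-valued inner products on $X^{\otimes n}$ agree with the restrictions of the $B$-valued ones on $Y^{\otimes n}$, which follows from $\phi_Y|_A = \phi_X$ and the inclusion $A \subseteq B$.
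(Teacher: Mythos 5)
Your proposal is correct and follows essentially the same route as the paper: fix an injective gauge-covariant pair for $Y$, restrict it to $X$ via Lemma \ref{L:cov}, show $\psi_{t^n}(\K(X^{\otimes n}))\subseteq\pi(B)$ for all $n$ so that $A_\infty\subseteq\pi(B)$, and conclude $X_\infty=\overline{X\cdot A_\infty}\subseteq\overline{X\cdot B}=Y$. The only difference is in the middle step and is minor: where you pass through the Hilbert-bimodule structure of $Y^{\otimes n}$ and the identity $\psi_{t^n}(\K(Y^{\otimes n}))=\pi(\lsca{Y^{\otimes n},Y^{\otimes n}})$, the paper instead uses $\psi_t(\K(Y))=\pi(J_Y)$ at level one and then inducts, using the density $Y=\overline{X\cdot B}$ to push $\overline{t(X)\pi(B)t(X)^*}$ back into $\psi_t(\K(Y))\subseteq\pi(B)$ — both yield the same containment.
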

\begin{proof}
As proved, $X_\infty$ is such a correspondence. In order to show that $X_\infty$ is a subcorrespondence of a $Y$ that satisfies the above, it suffices to prove that $A_\infty \subseteq B$. Indeed, then
\[
X_\infty  = \overline{X \cdot A_\infty} \subseteq \overline{X \cdot B} = Y,
\]
where ``$\cdot$'' denotes the right action in $Y$.

Fix an injective covariant pair $(\pi,t)$ of $Y$ that admits a gauge action. Then $(\pi|_A, t|_X)$ is an injective covariant pair for $X$ that admits a gauge action by Lemma \ref{L:cov}, and
\[
A_\infty \simeq \pi(A) + \overline{\Span}\{ \psi_{(t|_X)^n} (\K(X^{\otimes n}))\mid n \geq 1\}.
\]
Since $Y$ is a Hilbert bimodule, then $\phi_Y(J_Y)=\K(Y)$. Thus
\begin{align*}
\psi_{t|_X}(\K(X))
 = \psi_t(\K(X)) \subseteq \psi_t(\K(Y))
 = \psi_t(\phi_Y(J_Y))=\pi(J_Y) \subseteq \pi(B).
\end{align*}
Moreover,
\begin{align*}
\psi_{(t|_X)^2}(\K(X^{\otimes 2}))
& \subseteq  \overline{t(X)\psi_{t|_X}(\K(X)) t(X)^*} \\
& \subseteq \overline{t(X) \pi(B) t(X)^*}\\
& = \overline{t(X \cdot B) t(X)^*} \\
& \subseteq \overline{t(Y)t(Y)^*} \\
& = \psi_t(\K(Y)) \\
& \subseteq \pi(B).
\end{align*}
Inductively, we get that $\psi_{(t|_X)^n}(\K(X^{\otimes n})) \subseteq \pi(B)$, for all $n \geq 1$. Trivially $\pi(A) \subseteq \pi(B)$ since $X$ is a subcorrespondence of $Y$, therefore $A_\infty$ is ($*$-isomorphic to) a subalgebra of $\pi(B)$, hence $A_\infty \subseteq B$.
\end{proof}

Moreover the Pimsner dilation of an injective $\ca$-correspondence is the unique minimal dilation of $X$.

\begin{theorem}\label{T:main 7}
Let ${}_A X_A$ be an injective subcorrespondence of some ${}_B Y_B$. Then $Y \approx X_\infty$ if and only if
\begin{enumerate}
\item $Y$ is an essential Hilbert bimodule,
\item $Y=\overline{X \cdot B}$,
\item $\O_Y = \O_X$.
\end{enumerate}
Then $Y_B$ is minimal as a Hilbert bimodule containing $X_A$, automatically.
\end{theorem}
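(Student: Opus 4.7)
The forward implication is immediate: if $Y\approx X_\infty$, then the three conditions are part of the very description of the Pimsner dilation given before Theorem~\ref{T:inj} ($Y=\overline{X\cdot A_\infty}$, $Y$ is an essential Hilbert bimodule, and $\O_Y=\O_X$). For the converse, conditions (1), (2), together with $X$ being an injective subcorrespondence of $Y$, are exactly the hypotheses of Theorem~\ref{T:minimal dilation}; it yields $A_\infty\subseteq B$ and
\[
X_\infty=\overline{X\cdot A_\infty}\subseteq\overline{X\cdot B}=Y.
\]
The problem thus reduces to proving the reverse inclusion $B\subseteq A_\infty$, because that identity immediately gives $Y=X_\infty$ and, via Theorem~\ref{T:min sub}, forces the minimality statement as well.

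To establish $B\subseteq A_\infty$ I plan to compare the gauge fixed-point algebras of $\O_Y$ and $\O_X$ once they are both realised inside a single ambient $\ca$-algebra. Fix an injective covariant pair $(\pi,t)$ of ${}_B Y_B$ admitting a gauge action $\{\be_z\}$. By Lemma~\ref{L:cov} the restriction $(\pi|_A,t|_X)$ is an injective covariant pair of $X$; the $\ca$-algebra it generates sits inside $\ca(\pi,t)$, is $\{\be_z\}$-invariant (since $\be_z$ only rotates the canonical generators), and by Theorem~\ref{T:inj} is canonically $*$-isomorphic to $\O_X=\O_{X_\infty}$ in a way that identifies its gauge fixed-point algebra with $\pi(A_\infty)$. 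Condition~(3) now forces $\ca(\pi|_A,t|_X)=\ca(\pi,t)$, so the two gauge fixed-point algebras must agree. On the $Y$-side, the Hilbert-bimodule identity $\phi_Y(J_Y)=\K(Y)$, together with the analogous identity for each tensor power $Y^{\otimes n}$, gives $\psi_{t^n}(\K(Y^{\otimes n}))=\pi(J_{Y^{\otimes n}})\subseteq\pi(B)$, so that the fixed-point algebra of $\ca(\pi,t)$ collapses to $\pi(B)$. Equating the two expressions and using injectivity of $\pi$ yields $A_\infty=B$.

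The main obstacle I expect is the identification of the gauge fixed-point algebra of $\O_Y$ with $\pi(B)$, which hinges on the identity $\phi_{Y^{\otimes n}}(J_{Y^{\otimes n}})=\K(Y^{\otimes n})$ at every tensor level; this amounts to showing that tensor powers of an essential (not necessarily full) Hilbert bimodule remain Hilbert bimodules, which is standard but deserves care in this generality. A secondary technicality is tracking compatibility of the gauge actions on $\ca(\pi,t)$ and $\ca(\pi|_A,t|_X)$, but this reduces to inspecting how $\be_z$ acts on the canonical generators, where it rotates identically in both algebras.
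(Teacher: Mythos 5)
Your proposal is correct and follows essentially the same route as the paper: both arguments fix an injective gauge-admitting covariant pair of $Y$, restrict it to $X$, and then identify the gauge fixed-point algebra of $\O_X$ with $\pi(A_\infty)$ and that of $\O_Y$ with $\pi(B)$ (the latter using the Hilbert bimodule property of $Y$), so that condition (3) forces $A_\infty\simeq B$ and hence $Y=\overline{X\cdot B}=\overline{X\cdot A_\infty}=X_\infty$. The technicality you flag about higher tensor powers is handled exactly as in the inductive step of Theorem~\ref{T:minimal dilation}, so it poses no real obstacle.
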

\begin{proof}
As in the proof of Theorem \ref{T:main 6} it suffices to show that $B\simeq A_\infty$. To this end fix an injective covariant pair $(\pi,t)$ of $Y$ that admits a gauge action. Then $(\pi|_A,t|_X)$ is an injective covariant pair of $X$ that inherits the gauge action of $(\pi,t)$ by restriction, and we can assume that $\ca(\pi|_A,t_X)= \O_X =\O_Y= \ca(\pi,t)$. Therefore $A_\infty$ is $*$-isomorphic to the fixed point algebra $\O_X^{\be}$. Since $\O_X= \O_Y$ and $Y_B$ is a Hilbert bimodule we obtain
\[
A_\infty \simeq \O_X^\be = E(\O_X) = E(\O_Y) = \pi(B).
\]
Therefore $A_\infty \simeq B$ and the proof is complete.
\end{proof}

\subsection{Dilations for Compacts}

The construction of the Pimsner dilation applies to more general settings. Indeed, let $X, Y$ be Hilbert $A$-modules and let ${}_A Z_B$ be a regular correspondence. By Lemma \ref{L:compact ten pr}, one can form the following directed systems {\footnotesize
\begin{align*}
\K(X,Y) \stackrel{\otimes{\id_Z}}{\longrightarrow} \K(X\otimes Z,
Y\otimes Z) \stackrel{\otimes{\id_Z}}{\longrightarrow} \dots
&\longrightarrow \varinjlim (\K(X \otimes Z^{\otimes n}, Y
\otimes Z^{\otimes n}), \otimes\id_Z),
\\
\K(X) \stackrel{\otimes{\id_Z}}{\longrightarrow} \K(X\otimes Z)
\stackrel{\otimes{\id_Z}}{\longrightarrow} \dots &\longrightarrow \varinjlim
( \K(X \otimes Z^{\otimes n}), \otimes\id_Z),
\\
\K(Y) \stackrel{\otimes{\id_Z}}{\longrightarrow} \K(Y\otimes Z)
\stackrel{\otimes{\id_Z}}{\longrightarrow} \dots &\longrightarrow \varinjlim
( \K(Y \otimes Z^{\otimes n}), \otimes\id_Z).
\end{align*}
}For simplicity, we will write $(\K(X,Y),\id_Z)_\infty$ for the direct
limit
\begin{align*}
\varinjlim \left( \K(X \otimes Z^{\otimes n}, Y \otimes Z^{\otimes
n}), \id_Z\right).
\end{align*}
By imitating the proofs in \cite[Appendix A]{KakKat11} and Lemma \ref{L:compact ten pr}, we see that $(\K(X,Y), \id_Z)_\infty$ is a $\K(Y)_\infty$-$\K(X)_\infty$-correspondence, which will be called \emph{the dilation of $\K(X,Y)$ by $Z$}, or more simply the \emph{$Z$-dilation of $\K(X,Y)$}. If in addition $\sca{X\otimes Z^{\otimes n},X\otimes Z^{\otimes n}}$ provides a right c.a.i. for  $Y\otimes Z^{\otimes n}$, for every $n\geq 0$, then by Example \ref{E:K(X,Y)} and  Lemma \ref{L:cai for compact} $(\K(X,Y), \id_Z)_\infty$ is a regular $\ca$-correspondence. In the case of a regular  $\ca$-correspondence ${}_A X_A$, the Pimsner dilation $X_\infty$ is simply the $X$-dilation of $\K(X,X^{\otimes 2})$, which is always regular.

\begin{proposition}\label{P:wz=zw}
Let $X,Y$ be Hilbert $A$-modules and $Z, W$ be regular $\ca$-correspondences over $A$. If $Z \otimes_A W \approx W \otimes_A Z$, then
\begin{align*}
(\K(X,Y), \id_Z)_\infty \lesssim (\K(X\otimes_A W, Y\otimes_A W),
\id_Z)_\infty.
\end{align*}
\end{proposition}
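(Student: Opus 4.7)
The plan is to construct the embedding stage by stage in the directed systems defining the two $Z$-dilations and then pass to the limit. Let $v\colon Z\otimes_A W \to W\otimes_A Z$ denote the given unitary equivalence of correspondences. I would first extend it inductively to unitaries
\[
v_n\colon Z^{\otimes n}\otimes_A W \to W\otimes_A Z^{\otimes n}, \qquad v_1 := v,\quad v_{n+1} := (v_n\otimes \id_Z)\circ (\id_{Z^{\otimes n}}\otimes v),
\]
and at the $n$-th level define
\[
\Phi_n\colon \K(X\otimes Z^{\otimes n}, Y\otimes Z^{\otimes n}) \longrightarrow \K(X\otimes W\otimes Z^{\otimes n}, Y\otimes W\otimes Z^{\otimes n})
\]
by $\Phi_n(k) := (\id_Y\otimes v_n)(k\otimes \id_W)(\id_X\otimes v_n)^*$, together with the analogous $*$-homomorphisms $\Phi_n^X$ and $\Phi_n^Y$ on the coefficient compacts $\K(X\otimes Z^{\otimes n})$ and $\K(Y\otimes Z^{\otimes n})$. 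Because $W$ is regular, Lemma \ref{L:compact ten pr} guarantees that $k\otimes \id_W$ is itself compact and that the amplification is isometric; conjugation by the unitary $\id\otimes v_n$ preserves these properties, so each $\Phi_n$ is isometric and $\Phi_n^X,\Phi_n^Y$ are injective $*$-homomorphisms.

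A short algebraic computation then shows that the triple $(\Phi_n^Y,\Phi_n,\Phi_n^X)$ intertwines the left and right $\K$-actions and the $\K$-valued inner product, hence is a morphism of $\ca$-correspondences.

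The main step, and the main obstacle, is to check compatibility of these maps with the connecting maps $\otimes\id_Z$ of the two directed systems, namely
\[
\Phi_{n+1}(k\otimes \id_Z) = \Phi_n(k)\otimes \id_Z
\]
(and likewise at the level of $\Phi_n^X$ and $\Phi_n^Y$). The recursive definition of $v_{n+1}$ is tailored precisely to this: after expanding both sides using $v_{n+1}=(v_n\otimes \id_Z)\circ(\id_{Z^{\otimes n}}\otimes v)$, the ``extra'' factor $\id_{Z^{\otimes n}}\otimes v$ commutes past $k\otimes \id_Z\otimes \id_W$ by the interchange law, since $k$ acts on the $X\otimes Z^{\otimes n}$ factor while $v$ acts only on the trailing $Z\otimes W$; it therefore cancels with its inverse appearing on the other side, leaving exactly
\[
(\id_Y\otimes v_n\otimes \id_Z)(k\otimes \id_W\otimes \id_Z)(\id_X\otimes v_n\otimes \id_Z)^* \, = \, \Phi_n(k)\otimes \id_Z.
\]

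With this compatibility in hand, the families $\{\Phi_n\}$, $\{\Phi_n^X\}$, $\{\Phi_n^Y\}$ pass to the direct limits and yield an isometric morphism of $\ca$-correspondences
\[
(\K(X,Y),\id_Z)_\infty \longrightarrow (\K(X\otimes_A W, Y\otimes_A W),\id_Z)_\infty,
\]
together with injective $*$-homomorphisms on the coefficient algebras $\K(X)_\infty$ and $\K(Y)_\infty$. The image is automatically a subcorrespondence of the target, which realizes the desired relation $\lesssim$.
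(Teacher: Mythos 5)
Your proposal is correct and follows essentially the same route as the paper: the paper defines the embedding levelwise by $[k]\mapsto[k\otimes\id_W]$ via a commutative ladder diagram, silently identifying $X\otimes Z^{\otimes n}\otimes W$ with $X\otimes W\otimes Z^{\otimes n}$, and your maps $\Phi_n$ are exactly this after making the identifying unitaries $v_n$ explicit. Your verification of compatibility with the connecting maps and of isometry (via Lemma \ref{L:compact ten pr} and the regularity of $W$) matches the paper's argument.
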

\begin{proof}
We will identify $Z \otimes W$ with $W \otimes Z$. Since $Z$ commutes with $W$ the diagram{\footnotesize
\begin{align*}
\xymatrix{ \K(X,Y) \ar^{\id_{\K(X,Y)}}[d] \ar^{\id_{\K(X,Y)}}[r] & \K(X,Y) \ar^{\otimes{\id_W} }[d] \ar^{{\otimes{\id_Z} } \ }[r] & \K(X \otimes Z,
Y \otimes Z) \ar^{\otimes{\id_W} }[d] \ar^{\qquad \otimes{\id_Z} }[r] &
\dots \\
\K(X,Y) \ar^{\otimes{\id_W} \qquad }[r] &\K(X\otimes W, Y\otimes W)  \ar^{\otimes{\id_Z} \qquad}[r] &
\K(X \otimes W \otimes Z, Y \otimes W \otimes Z)  \ar^{\qquad \qquad \qquad \otimes{\id_Z} }[r] &
\dots }
\end{align*}
}is commutative and defines a linear map
\begin{align*}
s\colon (\K(X,Y), \id_Z)_\infty \rightarrow (\K(X\otimes_A W, Y\otimes_A
W), \id_Z)_\infty.
\end{align*}
That is, if $[k]\in (\K(X,Y),\id_Z)_\infty$, such that $k \in \K(X \otimes Z^{\otimes n}, Y \otimes Z^{\otimes n})$ then
{\small
\begin{align*}
k\otimes \id_W \in \K(X\otimes Z^{\otimes n} \otimes W, Y\otimes Z^{\otimes n} \otimes W) = \K(X\otimes  W \otimes  Z^{\otimes n}, Y\otimes Z^{\otimes n}),
\end{align*}
}and we define $s[k]= [k \otimes \id_W]$. Note that $s$ is defined on $(\K(X),\id_Z)_\infty$ and on $(\K(Y),\id_Z)_\infty$ in the analogous way. For example we get the commutative diagram
{\footnotesize
\begin{align*}
\xymatrix{ \K(X) \ar^{\id_{\K(X)}}[d] \ar^{\id_{\K(X)}}[r] & \K(X)  \ar^{\otimes{\id_W} }[d] \ar^{\otimes{\id_Z} \ }[r] & \K(X \otimes Z) \ar^{\otimes{\id_W} }[d] \ar^{\qquad \otimes{\id_Z} }[r] &
\dots \\
\K(X) \ar^{\otimes{\id_W} }[r] &\K(X\otimes W)  \ar^{\otimes{\id_Z} \quad}[r] &
\K(X \otimes W \otimes Z)  \ar^{\qquad \quad \otimes{\id_Z} }[r] &
\dots }
\end{align*}}
It is a matter of routine computations to show that $s$ is a $(\K(Y),\id_Z)_\infty$-$(\K(X),\id_Z)_\infty$-mapping. What is left to show is that $s$ is also a unitary onto its range.

It suffices to prove this at the $n$-th level. For $k_1, k_2 \in \K(X\otimes Z^{\otimes n},Y\otimes Z^{\otimes n})$, then
\begin{align*}
\sca{s[k_1], s[k_2]}
& = \sca{[k_1 \otimes \id_{ W}], [k_2\otimes \id_{W}]} \\
& = [k_1 \otimes \id_{W}]^* [k_2 \otimes \id_{W}] \\
& = [(k_1^*\circ k_2)\otimes \id_{W}] \\
& = [(k_1^*\circ k_2)] \\
& = \sca{[k_1], [k_2]},
\end{align*}
where we have identified $(\K(X), \id_Z)_\infty$ with its image via $s$.
\end{proof}

For the proof of the following Proposition, recall that there is a major difference between isometric mappings of $\ca$-correspondences and mappings of $\ca$-correspondences that are isometries (of Hilbert modules). However, when an isometric mapping is also onto then it is a unitary between Hilbert modules.

\begin{proposition}\label{P:tens reg shifts}
Let $X, Y, W$ be Hilbert $A$-modules and $Z$ be a regular $\ca$-correspondence over $A$. If the ideal $\sca{X\otimes Z^{\otimes n},X\otimes Z^{\otimes n}}$ of $A$ provides a right c.a.i. for  $Y\otimes Z^{\otimes n}$, for all $n \geq 0$, then
\begin{align*}
(\K(X,Y), \id_Z)_\infty \otimes_{(\K(X),\id_Z)_\infty} (\K(W,X),
\id_Z)_\infty \approx (\K(W, Y), \id_Z)_\infty.
\end{align*}
\end{proposition}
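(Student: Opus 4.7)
The plan is to establish the asserted unitary equivalence first at each finite level of the directed systems and then to lift the level-$n$ equivalences to the direct limits. Writing $X_n := X\otimes_A Z^{\otimes n}$, $Y_n := Y\otimes_A Z^{\otimes n}$ and $W_n := W\otimes_A Z^{\otimes n}$, the target at level $n$ is the unitary equivalence of $\ca$-correspondences
\begin{align*}
\K(X_n, Y_n)\otimes_{\K(X_n)} \K(W_n, X_n)\approx \K(W_n, Y_n),
\end{align*}
implemented by composition $u_n\colon [k_1]\otimes [k_2]\mapsto [k_1 \circ k_2]$.

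First I would verify that $u_n$ is a well-defined map of $\K(Y_n)$-$\K(W_n)$-correspondences. Well-definedness on the balanced tensor product follows from associativity of composition and the fact that $\K(X_n)$ acts on $\K(X_n,Y_n)$ on the right and on $\K(W_n,X_n)$ on the left by composition. Inner-product preservation is the routine computation
\begin{align*}
\sca{k_1\otimes k_2, k_1'\otimes k_2'} = k_2^* (k_1^* k_1') k_2' = (k_1 k_2)^*(k_1'k_2') = \sca{u_n(k_1\otimes k_2), u_n(k_1'\otimes k_2')},
\end{align*}
and the left/right action intertwining is immediate from the description of the $\K(Y_n)$- and $\K(W_n)$-actions in Example \ref{E:K(X,Y)}. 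Surjectivity of $u_n$ is the content of Lemma \ref{L:cai for compact}: the hypothesis that $\sca{X_n,X_n}$ provides a right c.a.i.\ for $Y_n$ yields
\begin{align*}
\overline{\K(X_n,Y_n)\,\K(W_n,X_n)} = \K(W_n,Y_n),
\end{align*}
so the range of $u_n$ is dense; together with isometricity this forces $u_n$ to be a unitary onto $\K(W_n,Y_n)$.

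Next I would check that the family $\{u_n\}_{n\geq 0}$ is compatible with the connecting maps $\otimes \id_Z$ of each directed system. Concretely, for $k_1 \in \K(X_n, Y_n)$ and $k_2 \in \K(W_n, X_n)$ one has
\begin{align*}
u_{n+1}\bigl([k_1\otimes \id_Z]\otimes [k_2\otimes \id_Z]\bigr) = (k_1 k_2)\otimes \id_Z,
\end{align*}
which agrees with $u_n([k_1]\otimes[k_2])\otimes \id_Z$ by functoriality of $\otimes\id_Z$ under composition, established in Lemma \ref{L:compact ten pr}. Hence the $u_n$ assemble into a map
\begin{align*}
u_\infty\colon \varinjlim \bigl(\K(X_n,Y_n)\otimes_{\K(X_n)} \K(W_n,X_n),\, \otimes\id_Z\bigr) \longrightarrow (\K(W,Y),\id_Z)_\infty.
\end{align*}
Since each $u_n$ is a unitary onto its range and the connecting maps $\otimes \id_Z$ are isometric by Lemma \ref{L:compact ten pr}, $u_\infty$ is an isometric surjection, i.e., a unitary of $\ca$-correspondences from $\K(Y)_\infty$ to $\K(W)_\infty$.

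The main obstacle is the last identification: showing that the tensor product of directed systems over varying coefficient algebras coincides with the direct limit of the pointwise tensor products, i.e.,
\begin{align*}
\varinjlim \bigl(\K(X_n,Y_n)\otimes_{\K(X_n)} \K(W_n,X_n)\bigr) \approx (\K(X,Y),\id_Z)_\infty \otimes_{(\K(X),\id_Z)_\infty} (\K(W,X),\id_Z)_\infty.
\end{align*}
This is the analogue for correspondences of the interchange of direct limits and balanced tensor products; it requires checking that the universal map from the left to the right is isometric (using that the connecting maps on the coefficient side are injective $*$-homomorphisms so the relative tensor relations are preserved under the limit) and has dense range (since elementary tensors of representatives from fixed levels exhaust the balanced algebraic tensor product). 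Once this identification is in place, composing with $u_\infty$ yields the desired unitary equivalence of the statement.
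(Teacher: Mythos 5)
Your proposal is correct and follows essentially the same route as the paper's proof: both establish that the composition map $k\otimes t\mapsto kt$ is an isometric surjection (with surjectivity coming from Lemma \ref{L:cai for compact} and the c.a.i.\ hypothesis), and both reduce the remaining work to identifying the direct limit of the level-$n$ balanced tensor products with the balanced tensor product of the direct limits, via isometric maps with dense range. The only difference is cosmetic ordering — you build level-$n$ unitaries onto $\K(W_n,Y_n)$ and then pass to the limit, whereas the paper maps each level directly into $(\K(W,Y),\id_Z)_\infty$ — but the content and the lemmas invoked are the same.
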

\begin{proof}
In order to ``visualize'' the proof, imagine that we multiply ``vertically'' and term by term the correspondences
\begin{align*}
\K(X,Y) \stackrel{\otimes{\id_Z} }{\longrightarrow} \K(X\otimes_A Z,
Y\otimes_A Z) \stackrel{\otimes{\id_Z} }{\longrightarrow} \dots
&\longrightarrow(\K(X, Y), \id_Z)_\infty\\
\K(W,X) \stackrel{\otimes{\id_Z} }{\longrightarrow} \K(W\otimes_A Z,
X\otimes_A Z) \stackrel{\otimes{\id_Z} }{\longrightarrow} \dots
&\longrightarrow (\K(W, X), \id_Z)_\infty
\end{align*}
in the order $\K(X\otimes_A Z^{\otimes n}, Y\otimes_A Z^{\otimes n}) \cdot \K(W\otimes_A Z^{\otimes n}, X\otimes_A Z^{\otimes n})$.

We consider $\K(X\otimes Z^{\otimes n})$, $\K(Y\otimes Z^{\otimes n})$ and $\K(W\otimes Z^{\otimes n})$ as $\ca$-subalgebras of $(\K(X),\id_Z)_\infty$, $(\K(Y),\id_Z)_\infty$ and $(\K(W),\id_Z)_\infty$, respectively, for every $n\in \bbZ_+$. The proof is divided into two parts.

For the first part of the proof note that for every $n \in \bbZ_+$, the Banach space $\K(X\otimes_A Z^{\otimes n}, Y\otimes_A Z^{\otimes n})$ (resp. $\K(W\otimes_A Z^{\otimes n}, X\otimes_A Z^{\otimes n})$) is an injective $\K(Y\otimes_A Z^{\otimes n})$-$\K(X\otimes_A Z^{\otimes n})$-correspondence (resp. a $\K(X\otimes_A Z^{\otimes n})$-$\K(W\otimes_A Z^{\otimes n})$-correspondence) in the obvious way.
Set  {\small
\begin{align*}
\fA_n:= \K(X\otimes_A Z^{\otimes n}, Y\otimes_A Z^{\otimes n})  \otimes_{\K(X\otimes_A Z^{\otimes n})}   \K(W\otimes_A Z^{\otimes n}, X\otimes_A Z^{\otimes n}).
\end{align*}
}For every $n \in \bbZ_+$, the mapping
\begin{align*}
\rho_n\colon \fA_n \rightarrow \fA_{n+1}: k \otimes t \mapsto (k\otimes \id_Z)\otimes (t\otimes \id_Z),
\end{align*}
is isometric on sums of elementary tensors and therefore extends to the closures. Thus $\fA_n \lesssim \fA_{n+1}$, and we can form the direct limit $\ca$-correspondence
\begin{align*}
\fA_0 \stackrel{\rho_0 }{\longrightarrow} \fA_1 \stackrel{\rho_1 }{\longrightarrow} \fA_2 \stackrel{\rho_2 }{\longrightarrow} \dots \longrightarrow \varinjlim(\fA_n,\rho_n),
\end{align*}
which is a $(\K(Y),\id_Z)_\infty$-$(\K(W),\id_Z)_\infty$-correspondence. Now the mappings
{\small
\begin{align*}
\phi_n\colon
\fA_n \rightarrow
(\K(X,Y),\id_Z)_\infty \otimes_{(\K(X),\id_Z)_\infty}  (\K(W,X),\id_Z)_\infty : k\otimes t \mapsto [k]\otimes [t]
\end{align*}
}are compatible with the directed system and define a $\ca$-correspondence mapping
\begin{align*}
\phi\colon \varinjlim(\fA_n,\rho_n) \rightarrow
(\K(X,Y),\id_Z)_\infty \otimes_{(\K(X),\id_Z)_\infty}  (\K(W,X),\id_Z)_\infty,
\end{align*}
which is isometric since every $\phi_{n+1}$ is. Thus
\begin{align*}
\varinjlim(\fA_n,\rho_n) \lesssim(\K(X,Y),\id_Z)_\infty \otimes_{(\K(X),\id_Z)_\infty}  (\K(W,X),\id_Z)_\infty,
\end{align*}
via $\phi$. However $(\K(X,Y),\id_Z)_\infty \otimes_{(\K(X),\id_Z)_\infty}  (\K(W,X),\id_Z)_\infty$ is spanned by the elements $[k]\otimes[t]$ for $k\otimes t\in \fA_n$. Therefore $\phi$ is onto and so
\begin{align*}
\varinjlim(\fA_n,\rho_n) \approx
(\K(X,Y),\id_Z)_\infty \otimes_{(\K(X),\id_Z)_\infty}  (\K(W,X),\id_Z)_\infty
\end{align*}

For the second part of the proof we construct an isometric map from $\varinjlim(\fA_n,\rho_n)$ onto $(\K(W,Y),\id_Z)_\infty$. Start by  defining the maps
\begin{align*}
u_n\colon \fA_n \rightarrow (\K(W, Y), \id_Z)_\infty:
k\otimes t \mapsto [kt].
\end{align*}
These are well defined since $kt\in \K(W\otimes Z^{\otimes n}, Y\otimes Z^{\otimes n})$ and
\begin{align*}
u_n((k\cdot a) \otimes t)= [(k\cdot a)t]= [k(a\cdot t)]= u_n(k\otimes (a \cdot t)),
\end{align*}
due to the associativity of the multiplication. Also, the maps $u_n$ are isometric and compatible with the direct sequence. Therefore the family $\{u_n\}$ defines an isometric map $u$ from the direct limit into $(\K(W,Y),\id_Z)_\infty$, which extends to a mapping of $\ca$-correspondences. Thus $\fA_n \lesssim (\K(W,Y),\id_Z)_\infty$ and so $\varinjlim(\fA_n,\rho_n) \lesssim (\K(W,Y),\id_Z)_\infty$. Finally note that the assumption that the ideals $\sca{X\otimes Z^{\otimes n}, X \otimes Z^{\otimes n}}$ provide a c.a.i. for each $Y\otimes Z^{\otimes n}$, combined with Lemma \ref{L:cai for compact}, implies that the isometric maps $u_n$ are onto and hence $u$ is onto.
\end{proof}

\section{Relations associated with $\ca$-correspondences}\label{S:rel}

In \cite{MuhSol00} Muhly and Solel introduced the notion of Morita equivalence of $\ca$-correspondences. This concept generalizes the notion of outer conjugacy for $\ca$-dynamical systems \cite[Proposition 2.4]{MuhSol00}.

\begin{definition}\label{D:sme}
The $\ca$-correspondences ${}_A E_A$ and ${}_B F_B$ are called \emph{strong Morita equivalent} if there is an imprimitivity bimodule ${}_A M_B$ such that $E \otimes_A M \approx M \otimes_B F$. In that case we write $E \sme F$.
\end{definition}

Muhly and Solel \cite{MuhSol00} examined this relation under the assumption that the $\ca$-correspondences are both non-degenerate and injective. Nevertheless, non-degeneracy is automatically implied. Indeed, if $E \sme F$ via $M$, then
\begin{align*}
E
\approx E \otimes_A A
\approx E \otimes_A M \otimes_B M^*
\approx M\otimes_B F \otimes_B M^*
\approx M\otimes_B (F \otimes_B M^*),
\end{align*}
and Lemma \ref{L:sme non-deg} implies that $E$ is non-degenerate. A symmetrical argument applies for $F$.

\begin{remark}
In contrast to non-degeneracy, injectivity is not automatically implied by $\sme$. For example, pick your favorite non-degenerate and non-injective $\ca$-correspondence ${}_A E_A$ and let $E=F$. Then $E\sme E$ via the trivial imprimitivity bimodule ${}_A A_A$, but both $E$ and $F$ are not injective.
\end{remark}

In \cite{MPT08} Muhly, Pask and Tomforde introduced the notion of elementary strong shift equivalence between $\ca$-correspondences that generalizes the corresponding notion for graphs.

\begin{definition}\label{D:esse}
Let ${}_A E_A$ and ${}_B F_B$ be $\ca$-correspondences. Then $E$ and $F$ will be called \emph{elementary strong shift equivalent} (symb.  $E\sse F$) if there are $\ca$-correspondences ${}_A R_B$ and ${}_B S_A$ such that $E\approx R\otimes_B S$ and $F\approx S\otimes_A R$ as $\ca$-correspondences.
\end{definition}

Elementary strong shift equivalence is obviously symmetric. Moreover it is reflexive for non-degenerate $\ca$-correspondences; indeed, $E \sse E$ via $E$ and $A$. However, it is unclear whether or not it is transitive. By \cite[Example 2]{Wil74} we get that $\sse$ is not transitive when restricted to the class of non-negative integral matrices, but this doesn't mean that $\sse$ is not transitive for the whole class of C*-correspondences.

Nevertheless, we have the following proposition.

\begin{proposition}\label{P:condition trans}
Let ${}_A E_A, {}_B F_B$ and ${}_C G_C$ be $\ca$-correspondences. Assume that $E \sse F$ via $R, S$ and $F\sse G$ via $T, Z$. If either $Z$ or $R$ is an imprimitivity bimodule, then $E \sse G$.
\end{proposition}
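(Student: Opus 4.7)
The plan is to exhibit, in each case, explicit $\ca$-correspondences $P$ (of type $A$-$C$) and $Q$ (of type $C$-$A$) such that $E \approx P \otimes_C Q$ and $G \approx Q \otimes_A P$, thereby producing $E \sse G$. They will be built from $R, S, T, Z$ together with the dual $M^*$ of whichever of $R$ or $Z$ is an imprimitivity bimodule. The key tool is Lemma \ref{L:imprimitivity}: for an imprimitivity bimodule $M$, the identifications $M \otimes M^* \approx A$ and $M^* \otimes M \approx B$ allow one to ``cancel'' or ``insert'' copies of the coefficient algebras inside a chain of internal tensor products.

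\textbf{Case 1: $Z$ is a $C$-$B$-imprimitivity bimodule.} I would set $P := R \otimes_B Z^*$ and $Q := Z \otimes_B S$. Using associativity of $\otimes$, the identifications $Z^* \otimes_C Z \approx B$ and $Z \otimes_B Z^* \approx C$, together with $F \approx S \otimes_A R \approx T \otimes_C Z$, one obtains
\begin{align*}
P \otimes_C Q &\approx R \otimes_B (Z^* \otimes_C Z) \otimes_B S \approx R \otimes_B B \otimes_B S \approx R \otimes_B S \approx E, \\
Q \otimes_A P &\approx Z \otimes_B (S \otimes_A R) \otimes_B Z^* \approx Z \otimes_B F \otimes_B Z^* \\
&\approx Z \otimes_B T \otimes_C (Z \otimes_B Z^*) \approx Z \otimes_B T \otimes_C C \approx G,
\end{align*}
where at each step I invoke either associativity, one of the imprimitivity identifications, or the standard simplification $X \otimes_D D \approx X$ valid for any Hilbert $D$-module $X$.

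\textbf{Case 2: $R$ is an $A$-$B$-imprimitivity bimodule.} Symmetrically, I would take $P := R \otimes_B T$ and $Q := Z \otimes_B R^*$, and use $R \otimes_B R^* \approx A$, $R^* \otimes_A R \approx B$ to compute
\begin{align*}
Q \otimes_A P &\approx Z \otimes_B (R^* \otimes_A R) \otimes_B T \approx Z \otimes_B B \otimes_B T \approx Z \otimes_B T \approx G, \\
P \otimes_C Q &\approx R \otimes_B (T \otimes_C Z) \otimes_B R^* \approx R \otimes_B (S \otimes_A R) \otimes_B R^* \\
&\approx (R \otimes_B S) \otimes_A (R \otimes_B R^*) \approx E \otimes_A A \approx E.
\end{align*}

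\textbf{Main point.} I do not expect a serious obstacle here; the proof reduces to careful bookkeeping of which coefficient algebra each tensor product is taken over, together with a single use of Lemma \ref{L:imprimitivity} per case to collapse the inserted $M \otimes M^*$ (or $M^* \otimes M$) into the appropriate algebra. The only point worth double-checking is that the unitary equivalences produced at each stage are genuinely $\ca$-correspondence equivalences (i.e., intertwine the relevant left and right actions) as they are chained, but this is routine given the associativity of the internal tensor product and Example~\ref{Ex:x^* otimes x}.
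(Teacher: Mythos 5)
Your proposal is correct and follows essentially the same route as the paper: in the case where $Z$ is an imprimitivity bimodule, the paper implements $E \sse G$ via exactly the pair $R \otimes_B Z^*$ and $Z \otimes_B S$, with the same chain of identifications using Lemma \ref{L:imprimitivity} and $F \approx S\otimes_A R \approx T\otimes_C Z$. The only difference is cosmetic — the paper dispatches the case where $R$ is an imprimitivity bimodule by citing symmetry, whereas you write out that computation explicitly.
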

\begin{proof}
For $E, F$ and $G$ as above we have that
\begin{align*}
E\approx R \otimes_B S \, , F\approx S\otimes_A R \, , F\approx T \otimes_C Z \, , G\approx Z \otimes_B T.
\end{align*}
Assume that $Z$ is an imprimitivity bimodule (a symmetric argument can be used if $R$ is an imprimitivity bimodule). Then by Lemma \ref{L:imprimitivity} $Z^*\otimes_C Z \approx B$ and $Z \otimes_B Z^* \approx C$. Hence,
\begin{align*}
(Z \otimes_B S) \otimes_A (R \otimes_B Z^*)
& \approx Z \otimes_B (S\otimes_A R) \otimes_B Z^* \\
& \approx Z \otimes_B F \otimes_B Z^* \\
& \approx Z \otimes_B T \otimes_C Z \otimes_B Z^* \\
& \approx Z \otimes_B T \otimes_C C \\
& \approx Z\otimes_B (T\cdot C) \\
& = Z \otimes_B T \\
& \approx G.
\end{align*}
On the other hand,
\begin{align*}
(R \otimes_B Z^*) \otimes_C (Z \otimes_B S)
 \approx R \otimes_B B  \otimes_B S
 = (R \cdot B) \otimes_B S
 = R\otimes_B S \approx E,
\end{align*}
hence $E\sse G$, which completes the proof.
\end{proof}

Following Williams \cite{Wil74} we denote by $\tsse$ the \emph{transitive closure of the relation $\sse$}. That is, $E \tsse F$ if there are $n$ $\ca$-correspondences $T_i$, $i=0,\dots,n$, such that $T_0=E$, $T_n= F$ and $T_i \sse T_{i+1}$.

There is also another relation between $\ca$-correspondences inspired by Williams' work \cite{Wil74}.

\begin{definition}\label{D:se}
Let ${}_A E_A$ and ${}_B F_B$ be $\ca$-correspondences. Then $E$ and $F$ will be called \emph{shift equivalent} (symb.  $E\se F$) if there are $\ca$-correspondences ${}_A R_B$ and ${}_B S_A$ and a natural number $m$ so that
\begin{align*}
&\text{\textup{(i)}}\quad  E^{\otimes m} \approx R \otimes_A S ,\ F^{\otimes m} \approx  S \otimes_B R,\\
&\text{\textup{(ii)}}\quad S\otimes_A E\approx  F \otimes_B S,\ E\otimes_A R
\approx  R \otimes_B F.
\end{align*}
The number $m$ is called the \emph{lag} of the equivalence.
\end{definition}

If $E \se F$ with lag $m$, then by replacing $S$ with $S\otimes_A E^{\otimes k}$ we obtain another shift equivalence of lag $m+k$, i.e., there is a shift equivalence of lag $L$ for every $L \geq m$.

In contrast to elementary strong shift equivalence, shift equivalence is an equivalence relation  when restricted to subclasses that are closed under tensoring. In fact it is an equivalence relation for the whole class of arbitrary $\ca$-correspondences.

\begin{proposition}\label{P:se is trans}
Shift equivalence is an equivalence relation.
\end{proposition}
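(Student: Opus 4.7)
The plan is to verify the three properties—reflexivity, symmetry, and transitivity—in turn, with almost all of the substance residing in transitivity.

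For reflexivity, I would witness $E \se E$ by taking $R = S = E$ with lag $m = 2$. Then $R \otimes_A S = E^{\otimes 2} \approx S \otimes_A R$ and the intertwining equations $S \otimes_A E \approx E \otimes_A S$ and $E \otimes_A R \approx R \otimes_A E$ are trivially satisfied. Symmetry is immediate: if $(R, S, m)$ witnesses $E \se F$, then $(S, R, m)$ witnesses $F \se E$, since conditions (i) and (ii) in Definition \ref{D:se} are manifestly symmetric in $(E, R)$ and $(F, S)$.

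The main work is transitivity. Suppose $E \se F$ is witnessed by correspondences ${}_A R_1 {}_B$, ${}_B S_1 {}_A$ with lag $m_1$, and $F \se G$ is witnessed by ${}_B R_2 {}_C$, ${}_C S_2 {}_B$ with lag $m_2$. I propose taking
\begin{align*}
R := R_1 \otimes_B R_2, \qquad S := S_2 \otimes_B S_1, \qquad m := m_1 + m_2,
\end{align*}
which is suggested by the matrix proof of Williams \cite{Wil74}. The intertwining conditions in (ii) follow from one application of each of the four given intertwiners; namely,
\begin{align*}
S \otimes_A E
  = S_2 \otimes_B S_1 \otimes_A E
  \approx S_2 \otimes_B F \otimes_B S_1
  \approx G \otimes_C S_2 \otimes_B S_1
  = G \otimes_C S,
\end{align*}
and symmetrically $E \otimes_A R \approx R \otimes_C G$.

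For condition (i), I would use the factorization identities in tandem with iterated intertwining. Substituting $R_2 \otimes_C S_2 \approx F^{\otimes m_2}$ gives
\begin{align*}
R \otimes_C S \approx R_1 \otimes_B F^{\otimes m_2} \otimes_B S_1,
\end{align*}
and then an induction on the intertwining $R_1 \otimes_B F \approx E \otimes_A R_1$ yields $R_1 \otimes_B F^{\otimes m_2} \approx E^{\otimes m_2} \otimes_A R_1$, after which applying $R_1 \otimes_B S_1 \approx E^{\otimes m_1}$ produces $E^{\otimes(m_1+m_2)}$. The identity $S \otimes_A R \approx G^{\otimes(m_1+m_2)}$ is the mirror argument: first insert $S_1 \otimes_A R_1 \approx F^{\otimes m_1}$ to obtain $S_2 \otimes_B F^{\otimes m_1} \otimes_B R_2$, iterate $S_2 \otimes_B F \approx G \otimes_C S_2$, and close with $S_2 \otimes_B R_2 \approx G^{\otimes m_2}$.

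The only subtle point—and the main (minor) obstacle—is that each of these manipulations requires the intertwining $\approx$ to be preserved when tensoring against a fixed correspondence on either side, which is standard for unitary equivalence of $\ca$-correspondences but deserves to be invoked explicitly when inducting on $m_2$ (respectively $m_1$). Once this bookkeeping is in place, the four equivalences of Definition \ref{D:se} for the triple $(R, S, m_1 + m_2)$ are verified, and transitivity is established.
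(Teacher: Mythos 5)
Your proof is correct, and reflexivity and symmetry are handled exactly as in the paper ($R=S=E$ with lag $2$; swapping the roles of $R$ and $S$). For transitivity, however, you take a genuinely different route. The paper composes the two equivalences via the asymmetric pair $R_1 \otimes V \otimes U \otimes \cdots \otimes V$ (with $V$ repeated $m_1$ times, in your notation $V=R_2$, $U=S_2$) and $S_2\otimes S_1$, which produces a shift equivalence of lag $m_1 m_2 + m_1$; you instead use the symmetric pair $R_1\otimes_B R_2$, $S_2\otimes_B S_1$ and obtain lag $m_1+m_2$. Your verification is sound: the intertwining relations in (ii) follow from one application of each given intertwiner, and the factorizations in (i) follow by inserting $R_2\otimes S_2\approx F^{\otimes m_2}$ (resp.\ $S_1\otimes R_1\approx F^{\otimes m_1}$) and then iterating $R_1\otimes F\approx E\otimes R_1$ (resp.\ $S_2\otimes F\approx G\otimes S_2$); the only background fact needed, as you note, is that unitary equivalence of correspondences is preserved under tensoring with a fixed correspondence on either side, which is standard. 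What your approach buys is a smaller lag and a construction that is manifestly symmetric in the two witnessing pairs, mirroring the classical matrix argument; what the paper's choice buys is nothing essential here --- both are complete proofs --- so your version is, if anything, the cleaner one. No gap.
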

\begin{proof}
Note that $E \se E$ with lag $2$ and $R=S=E$, and it is clear that shift equivalence is symmetric. If $E \se F$ with lag $m$ via $R, S$ and $F \se G$ with lag $n$ via $V, U$, then $E\se G$ with lag $mn+m$ via
\begin{align*}
\underbrace{R \otimes V \otimes U \otimes \cdots \otimes V}_{V \text{ is repeated } m \text{ times}}\, \text{ and } \,  U\otimes S,
\end{align*}
which shows that $\se$ is transitive.
\end{proof}

\begin{theorem}\label{T:main 1}
Let ${}_A E_A$ and ${}_B F_B$ be $\ca$-correspondences. Then
\begin{align*}
E \sme F \, \Rightarrow \, E \sse F \,  \Rightarrow \, E\tsse F \, \Rightarrow \,  E \se F.
\end{align*}
\end{theorem}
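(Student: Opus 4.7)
The plan is to verify the three implications in order, with only the first requiring a genuine construction.

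\textbf{Step 1: $\sme \Rightarrow \sse$.} Suppose $E \sme F$ via an $A$-$B$-imprimitivity bimodule $M$, so that $E \otimes_A M \approx M \otimes_B F$. I would take $R := M$ (an $A$-$B$-correspondence) and $S := M^* \otimes_A E$ (a $B$-$A$-correspondence). Then
\[
R \otimes_B S = M \otimes_B M^* \otimes_A E \approx A \otimes_A E \approx E,
\]
where the first $\approx$ is Lemma \ref{L:imprimitivity} and the second uses that $E$ is non-degenerate (as noted in the remark following Definition \ref{D:sme}, $\sme$ automatically forces non-degeneracy). For the other side, push the equivalence $E \otimes_A M \approx M \otimes_B F$ through, then invoke the other half of Lemma \ref{L:imprimitivity}:
\[
S \otimes_A R = M^* \otimes_A E \otimes_A M \approx M^* \otimes_A M \otimes_B F \approx B \otimes_B F \approx F,
\]
again using non-degeneracy of $F$.

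\textbf{Step 2: $\sse \Rightarrow \tsse$.} This is immediate, since $\tsse$ is defined to be the transitive closure of $\sse$.

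\textbf{Step 3: $\tsse \Rightarrow \se$.} By Proposition \ref{P:se is trans}, $\se$ is already known to be an equivalence relation, so it is enough to check that $\sse$ implies $\se$. Given $E \sse F$ via $R, S$, I would simply recycle the same $R$ and $S$ with lag $m = 1$. Condition (i) of Definition \ref{D:se} is then the defining equivalence of $\sse$. For condition (ii), associativity of the interior tensor product gives
\[
S \otimes_A E \approx S \otimes_A (R \otimes_B S) \approx (S \otimes_A R) \otimes_B S \approx F \otimes_B S,
\]
and symmetrically $E \otimes_A R \approx (R \otimes_B S) \otimes_A R \approx R \otimes_B (S \otimes_A R) \approx R \otimes_B F$.

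\textbf{Main obstacle.} To the extent there is one, it is in Step 1: one must be careful that the ``identity'' equivalences $A \otimes_A E \approx E$ and $B \otimes_B F \approx F$ are in fact unitary, which is exactly where the automatic non-degeneracy granted by $\sme$ is needed. After that, everything reduces to formal manipulations with Lemma \ref{L:imprimitivity} and associativity of the tensor product.
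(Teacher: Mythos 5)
Your proof is correct and follows essentially the same route as the paper's: in Step 1 the paper takes the mirror-image choice $R = E \otimes_A M$, $S = M^*$ instead of your $R = M$, $S = M^* \otimes_A E$, and in Step 3 it verifies $E \se F$ directly with lag $n$ via the composites $R_1 \otimes \cdots \otimes R_n$ and $S_n \otimes \cdots \otimes S_1$ rather than factoring through the lag-$1$ case and the transitivity of $\se$ from Proposition \ref{P:se is trans}. Both variations are immaterial, and your emphasis on where non-degeneracy (automatic under $\sme$) is needed matches the remark the paper itself makes after Definition \ref{D:sme}.
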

\begin{proof}
Recall that when $E\sme F$ then $E,F$ are non-degenerate. Let $R\equiv E\otimes M$ and $S \equiv M^*$. By Lemma \ref{L:technical},
\begin{align*}
R\otimes_B S
& \approx  E \otimes_A M \otimes_{B} M^*
\approx  E\otimes_A A \approx E,
\end{align*}
and
\begin{align*}
S \otimes_A R & \approx M^* \otimes_{A} E\otimes_A M
\approx  M^* \otimes_A M\otimes_B F
\approx  B\otimes_B F \approx F.
\end{align*}
Hence $\sme$ implies $\sse$. Trivially $\sse$ implies $\tsse$. To complete the proof assume that $E\tsse F$, i.e., there are $T_i$, $i=0,\dots,n$, such that $T_i \sse T_{i+1}$, where $T_0=E$ and $T_n= F$. Then one can directly verify that $E \se F$ with lag $n$, via $R= R_1 \otimes \dots \otimes R_n$ and $S= S_n \otimes \dots \otimes S_1$.
\end{proof}

Muhly, Pask and Tomforde \cite{MPT08} provide a number of examples to show that strong Morita equivalence differs from elementary strong shift equivalence. In Theorem \ref{T:main 1} above we prove that in fact it is stronger. Nevertheless, the following result shows that under certain circumstances, the two notions coincide.

\begin{proposition}\label{P:sse gives sme}
Let ${}_A E_A$ and ${}_B F_B$ be $\ca$-correspondences and assume  that $E \sse F$ via $R, S$. If either $R$ or $S$ is an imprimitivity bimodule, then $E\sme F$.
\end{proposition}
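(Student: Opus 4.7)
The plan is to exhibit an explicit imprimitivity bimodule $M$ implementing the strong Morita equivalence, chosen directly from the data $R,S$. Recall $E \approx R\otimes_B S$ and $F \approx S \otimes_A R$, and that by Lemma~\ref{L:imprimitivity} an imprimitivity bimodule $N$ satisfies $N \otimes N^* \approx A$ and $N^* \otimes N \approx B$ on the appropriate sides.

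Case 1: suppose $R$ is an $A$-$B$-imprimitivity bimodule. I claim that $M := R$ already implements the strong Morita equivalence. Indeed, using associativity of the internal tensor product and the identification $F \approx S \otimes_A R$, one computes
\begin{align*}
E \otimes_A R \approx (R \otimes_B S) \otimes_A R \approx R \otimes_B (S \otimes_A R) \approx R \otimes_B F,
\end{align*}
which is exactly the requirement in Definition \ref{D:sme}.

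Case 2: suppose $S$ is a $B$-$A$-imprimitivity bimodule. Then $S^*$ is an $A$-$B$-imprimitivity bimodule, and I claim $M := S^*$ works. Using associativity and the identifications $S \otimes_A S^* \approx B$ and $S^* \otimes_B S \approx A$ from Lemma~\ref{L:imprimitivity}, both sides reduce to a copy of $R$:
\begin{align*}
E \otimes_A S^* &\approx (R \otimes_B S) \otimes_A S^* \approx R \otimes_B (S \otimes_A S^*) \approx R \otimes_B B \approx R, \\
S^* \otimes_B F &\approx S^* \otimes_B (S \otimes_A R) \approx (S^* \otimes_B S) \otimes_A R \approx A \otimes_A R \approx R.
\end{align*}
Hence $E \otimes_A S^* \approx S^* \otimes_B F$, as required.

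There is no genuine obstacle here; the argument is a direct computation once one has the associativity of $\otimes$ and the absorbing property of an imprimitivity bimodule under dualization (Lemma~\ref{L:imprimitivity}). The only thing to watch is bookkeeping of the one-sided coefficient algebras to make sure each tensor product is taken over the correct algebra, and that the unitary equivalences $R \otimes_B B \approx R$ and $A \otimes_A R \approx R$ are legitimate, which follows from the non-degeneracy guaranteed by $R$ (respectively $S^*$) being an imprimitivity bimodule.
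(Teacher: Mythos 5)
Your Case~1 is exactly the paper's proof: the entire published argument is the single computation $E \otimes_A R \approx (R\otimes_B S)\otimes_A R \approx R \otimes_B(S\otimes_A R) \approx R\otimes_B F$, so that part is correct and nothing more needs to be said about it.

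Case~2 is where you diverge from the paper, and there is a gap. The identification $R\otimes_B B\approx R$ is harmless (it is just $\overline{R\cdot B}=R$, automatic for any right Hilbert $B$-module), but $A\otimes_A R\approx R$ is the statement $\overline{\phi_R(A)R}=R$, i.e.\ that $R$ is non-degenerate as a left $A$-module. This is not among the hypotheses, and your justification misattributes it: $S^*$ being an imprimitivity bimodule makes $S^*$ non-degenerate, not $R$. Without left non-degeneracy of $R$ your two computations give $E\otimes_A S^*\approx R$ but $S^*\otimes_B F\approx A\otimes_A R = \overline{\phi_R(A)R}$, and these need not coincide. The paper sidesteps this entirely by recording instead the associativity identity $S\otimes_A E\approx S\otimes_A(R\otimes_B S)\approx(S\otimes_A R)\otimes_B S\approx F\otimes_B S$, which uses no non-degeneracy and realizes the Morita equivalence via the $B$-$A$-imprimitivity bimodule $S$ itself, i.e.\ reading Definition~\ref{D:sme} from the $F$-side. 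If you wish to keep the $S^*$ route, you must first establish that $R$ is left non-degenerate (for instance by assuming $E$ non-degenerate and transporting non-degeneracy through the invertible operation $-\otimes_B S$); as written, the step $A\otimes_A R\approx R$ is unjustified.
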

\begin{proof}
When $E\sse F$ via $R, S$ then $E \otimes_A R  \approx R\otimes_B S \otimes_A R \approx R\otimes_B F$ (analogously $S\otimes_B E \approx F \otimes_B S$).
\end{proof}

\section{Passing to Pimsner Dilations} \label{S:equiv}

In this section we show that if $\sim$ is any of the four relations defined in the previous section, then $E \sim F$ implies $E_\infty \sim F_\infty$, under the standing hypothesis that $E$ and $F$ are regular $\ca$-correspondences. Our study is based on the concept of a \textit{bipartite inflation}, an insightful construct originating in the work of Muhly, Pask and Tomforde \cite{MPT08}.

Assume that ${}_A E_A$ and ${}_B F_B$ are $\ca$-correspondences which are elementary strong shift equivalent via  ${}_A R_B$ and ${}_B S_A$. Let $X= \left[\begin{array}{c|c} 0 & R\\ S & 0 \end{array}\right]$ be \emph{the bipartite inflation of $S$ by $R$}. By Lemma \ref{L:technical}, we obtain\footnote{\ In order to ease notation, unitary equivalence of $\ca$-correspondences will be simply denoted as equality in this section.}
\[
X^{\otimes 2}= \left[\begin{array}{c|c} R\otimes_B S & 0\\ 0 & S\otimes_A R \end{array}\right] = \left[\begin{array}{c|c} E & 0\\ 0 & F \end{array}\right].
\]
By induction and Lemma \ref{L:technical},
\begin{align*}\label{eq: powers}
X^{\otimes 2k}= \left[\begin{array}{c|c} E^{\otimes k} & 0\\ 0 &
F^{\otimes k}
\end{array}\right], \quad
X^{\otimes 2k +1} = \left[\begin{array}{c|c}
0& E^{\otimes k}\otimes_A R\\
F^{\otimes k} \otimes_B S & 0
\end{array}\right],
\end{align*}
for all $k \in \bbZ_+$. In particular, if $E$ and $F$ are regular (resp. non-degenerate) then $R$ and $S$, and consequently $X$ and $X^{\otimes 2}$, are regular (resp. non-degenerate) as shown in \cite{MPT08}.

\begin{proposition}\label{P:technical}
Let $E \sse F$ via $R,S$. Then, for $k\in \bbZ_+$,
\begin{enumerate}
\item $\sca{ E^{\otimes k}\otimes R, E^{\otimes k}\otimes R }$ provides a right c.a.i. for $F^{\otimes k+1}$,
\item $\sca{ F^{\otimes k}\otimes S, F^{\otimes k}\otimes S }$ provides a right c.a.i. for $E^{\otimes k+1}$.
\end{enumerate}
\end{proposition}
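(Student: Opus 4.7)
The plan is to package both (1) and (2) into a single statement about the bipartite inflation and then read that statement as an immediate consequence of Lemma~\ref{L:cai ten pr}.

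Let $X = \left[\begin{array}{c|c} 0 & R \\ S & 0 \end{array}\right]$ be the bipartite inflation of $S$ by $R$, regarded as a correspondence over $C := A\oplus B$, and recall from the discussion preceding the proposition that
\begin{align*}
X^{\otimes 2k+1} = \left[\begin{array}{c|c} 0 & E^{\otimes k}\otimes_A R \\ F^{\otimes k}\otimes_B S & 0 \end{array}\right], \qquad X^{\otimes 2k+2} = \left[\begin{array}{c|c} E^{\otimes k+1} & 0 \\ 0 & F^{\otimes k+1} \end{array}\right].
\end{align*}
My first step will be to apply the matrix inner product formula from the preliminaries to compute
\begin{align*}
\sca{X^{\otimes 2k+1}, X^{\otimes 2k+1}} = \sca{F^{\otimes k}\otimes_B S, F^{\otimes k}\otimes_B S}_A \,\oplus\, \sca{E^{\otimes k}\otimes_A R, E^{\otimes k}\otimes_A R}_B
\end{align*}
as an ideal of $A\oplus B$; the off-diagonal shape of $X^{\otimes 2k+1}$ kills all cross terms.

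Since $X^{\otimes 2k+2}$ splits as a direct sum of a right $A$-module and a right $B$-module, an approximate unit of the displayed ideal is a right c.a.i.\ for $X^{\otimes 2k+2}$ precisely when its $A$-coordinate is a right c.a.i.\ for $E^{\otimes k+1}$ and its $B$-coordinate is a right c.a.i.\ for $F^{\otimes k+1}$. Thus the conjunction of (1) and (2) is equivalent to the single assertion that $\sca{X^{\otimes 2k+1}, X^{\otimes 2k+1}}$ provides a right c.a.i.\ for $X^{\otimes 2k+2}$. For this, I invoke Lemma~\ref{L:cai ten pr}: by associativity of the interior tensor product, $X^{\otimes 2k+2} \approx X \otimes_C X^{\otimes 2k+1}$, and the lemma applied with $Y := X^{\otimes 2k+1}$ delivers exactly what is needed.

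I do not anticipate any substantive obstacle. The whole argument is really just (a) unpacking the bipartite inflation to identify the two diagonal summands of $\sca{X^{\otimes 2k+1}, X^{\otimes 2k+1}}$ with the ideals appearing in (1) and (2), and (b) a one-line application of Lemma~\ref{L:cai ten pr}. As an equally short alternative, one could bypass the inflation and argue directly: iterated use of $E\approx R\otimes_B S$ gives $E^{\otimes k}\otimes_A R \approx R\otimes_B F^{\otimes k}$, hence the two inner product ideals in (1) coincide, and then $F^{\otimes k+1} \approx S\otimes_A(R\otimes_B F^{\otimes k})$ lets Lemma~\ref{L:cai ten pr} finish the job (and symmetrically for (2)).
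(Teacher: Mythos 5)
Your proposal is correct and follows essentially the same route as the paper: both form the bipartite inflation $X$, apply Lemma~\ref{L:cai ten pr} to $X^{\otimes 2k+2}=X\otimes X^{\otimes 2k+1}$, and read off (1) and (2) from the diagonal decomposition of $\sca{X^{\otimes 2k+1},X^{\otimes 2k+1}}$ inside $A\oplus B$. The direct alternative you sketch at the end (using $E^{\otimes k}\otimes R\approx R\otimes F^{\otimes k}$ and $F^{\otimes k+1}\approx S\otimes E^{\otimes k}\otimes R$) is also valid, but it is the same idea without the matrix packaging.
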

\begin{proof}
Let $X$ be the bipartite inflation of $S$ by $R$ and let $k \in \bbZ_+$. By Lemma \ref{L:cai ten pr} we have that $\sca{X^{\otimes 2k +1},X^{\otimes 2k +1}}$ provides a right c.a.i. for $X^{\otimes 2k+2}=X \otimes X^{\otimes 2k+1}$. However,
\[
X^{\otimes 2k+2}= \left[\begin{array}{c|c} E^{\otimes
k+1} & 0\\ 0 & F^{\otimes k+1}
\end{array}\right]
\]
and
\begin{align*}
&\sca{X^{\otimes 2k +1},X^{\otimes 2k +1}}= \\
&= \sca{\left[\begin{array}{c|c}
0& E^{\otimes k}\otimes_A R\\
F^{\otimes k} \otimes_B S & 0
\end{array}\right], \left[\begin{array}{c|c}
0& E^{\otimes k}\otimes_A R\\
F^{\otimes k} \otimes_B S & 0
\end{array}\right]}\\
&= \sca{F^{\otimes k}\otimes S, F^{\otimes k}\otimes
S}\oplus \sca{E^{\otimes k}\otimes R, E^{\otimes k}\otimes R}.
\end{align*}
This completes the  proof.
\end{proof}

We will make use of the dilation $X_\infty$ of the bipartite inflation of $S$ by $R$. Let $(A\oplus B)_\infty$ be the direct limit $\ca$-algebra of the following directed system
{\footnotesize
\begin{align*}
\left[\begin{array}{c|c} A & 0\\ 0 & B
\end{array}\right] & \stackrel{\rho_0}{\longrightarrow}
\K\left( \left[\begin{array}{c|c} 0 & R\\ S & 0
\end{array}\right] \right) \stackrel{\rho_1}{\longrightarrow}
\K\left( \left[\begin{array}{c|c} E & 0\\ 0 & F
\end{array}\right] \right) \stackrel{\rho_2}{\longrightarrow} \\
& \stackrel{\rho_2}{\longrightarrow} \K\left(
\left[\begin{array}{c|c} 0 & E \otimes R\\ F \otimes S & 0
\end{array}\right] \right) \stackrel{\rho_3}{\longrightarrow}
\K\left( \left[\begin{array}{c|c} E^{\otimes 2} & 0\\ 0 & F^{\otimes
2}
\end{array}\right] \right) \stackrel{\rho_4}{\longrightarrow}
\cdots,
\end{align*}
}where
\begin{align*}
&\rho_0=\phi_X \colon A=\L(A)\longrightarrow \L(X),\\
&\rho_n=\otimes \id_X \colon \L(X^{\otimes n}) \longrightarrow \L(X^{\otimes
n+1})\colon r \longmapsto r\otimes \id_X , \, n\geq 1.
\end{align*}
Note that, since $X$ and $X^{\otimes 2}$ are regular, $(A \oplus B)_\infty$ is $*$-isomorphic to the direct limit $\ca$-algebra of the following directed system
{\footnotesize
\begin{align*}
&\K\left( \left[\begin{array}{c|c} A & 0\\ 0 & B
\end{array}\right] \right) \stackrel{\rho_1\circ \rho_0}{\longrightarrow}
\K\left( \left[\begin{array}{c|c} E & 0\\ 0 & F
\end{array}\right] \right) \stackrel{\rho_3\circ \rho_2}{\longrightarrow}
\K\left( \left[\begin{array}{c|c} E^{\otimes 2} & 0\\ 0 & F^{\otimes
2}
\end{array}\right] \right) \stackrel{\rho_5\circ\rho_4}{\longrightarrow}
\cdots.
\end{align*}
}Since $E$ and $F$ are orthogonal subcorrespondences of $X^{\otimes 2}$ we get that $(A\oplus B)_\infty= A_\infty \oplus B_\infty$ and we can write $A_\infty$ and $B_\infty$ via the following directed systems
{\footnotesize
\begin{align*}
& \K\left( \left[\begin{array}{c|c} A & 0\\ 0 & 0
\end{array}\right] \right) \stackrel{\rho_1\circ \rho_0}{\longrightarrow}
\K\left( \left[\begin{array}{c|c} E & 0\\ 0 & 0
\end{array}\right] \right) \stackrel{\rho_3\circ \rho_2}{\longrightarrow}
\K\left( \left[\begin{array}{c|c} E^{\otimes 2} & 0\\ 0 & 0
\end{array}\right] \right) \stackrel{\rho_5\circ\rho_4}{\longrightarrow}
\cdots \longrightarrow A_\infty,\\
& \K\left( \left[\begin{array}{c|c} 0 & 0\\ 0 & B
\end{array}\right] \right) \stackrel{\rho_1\circ \rho_0}{\longrightarrow}
\K\left( \left[\begin{array}{c|c} 0 & 0\\ 0 & F
\end{array}\right] \right) \stackrel{\rho_3\circ \rho_2}{\longrightarrow}
\K\left( \left[\begin{array}{c|c} 0 & 0\\ 0 & F^{\otimes 2}
\end{array}\right] \right) \stackrel{\rho_5\circ\rho_4}{\longrightarrow}
\cdots \longrightarrow B_\infty.
\end{align*}
}
\noindent The $\ca$-correspondence $X_\infty$ is defined by the directed system
{\footnotesize
\begin{align*}
\left[\begin{array}{c|c} 0 & R\\ S & 0
\end{array}\right]
& \stackrel{\tau}{\longrightarrow}
\K\left( \left[\begin{array}{c|c} 0 & R\\ S & 0
\end{array}\right], \left[\begin{array}{c|c} E & 0\\ 0 & F
\end{array}\right] \right) \stackrel{\sigma_1}{\longrightarrow}\\
& \stackrel{\sigma_1}{\longrightarrow}
\K\left(\left[\begin{array}{c|c} E & 0\\ 0 & F
\end{array}\right] ,\left[\begin{array}{c|c} 0 & E\otimes R\\ F \otimes S & 0
\end{array}\right] \right) \stackrel{\sigma_2}{\longrightarrow}\\
& \stackrel{\sigma_2}{\longrightarrow}
\K\left(\left[\begin{array}{c|c} 0 & E\otimes R\\ F \otimes S & 0
\end{array}\right], \left[\begin{array}{c|c} E^{\otimes 2} & 0\\ 0 & F^{\otimes 2}
\end{array}\right]  \right)
\stackrel{\sigma_3}{\longrightarrow} \cdots
\end{align*}
}
where $\tau_\xi(\eta) = \xi \otimes \eta$, and {\small
\begin{align*}
\sigma_n=\otimes \id_X \colon \L(X^{\otimes n}, X^{\otimes n+1}) \rightarrow
\L(X^{\otimes n+1}, X^{\otimes n+2}) \colon  s \mapsto s\otimes
\id_X , \, n\geq 1.
\end{align*}
}
Let $R_\infty$ be the subcorrespondence of $X_\infty$ that is generated by the copies of
\begin{align*}
R\, , \K\left(\left[\begin{array}{c|c} 0 & 0\\
0 & F^{\otimes k}
\end{array}\right] ,\left[\begin{array}{c|c} 0 & E^{\otimes k} \otimes R\\ 0 &
0 \end{array}\right] \right),
\end{align*}
and
\begin{align*}
\K\left(\left[\begin{array}{c|c} 0 & 0\\ S & 0
\end{array}\right] ,\left[\begin{array}{c|c} E & 0\\ 0 &
0 \end{array}\right] \right), \,
\K\left(\left[\begin{array}{c|c} 0 & 0\\ F^{\otimes k}\otimes S & 0
\end{array}\right] ,\left[\begin{array}{c|c} E^{\otimes k+1} & 0\\ 0 &
0 \end{array}\right] \right),
\end{align*}
for $k\geq 1$. Because of Lemma \ref{L:compact ten pr}, $R_\infty$ can be written alternatively as a direct limit in the following two forms
\begin{align*}
\text{(i)}  \ & R \stackrel{\tau \circ (\otimes \id_{X})}{\longrightarrow}
\K\left(F,  E\otimes R \right)\stackrel{\otimes \id_{X^{\otimes 2}}}{\longrightarrow}
\K\left(F^{\otimes 2} , E^{\otimes 2}\otimes R \right)
\stackrel{\otimes \id_{X^{\otimes 2}}}{\longrightarrow} \cdots ,\\
\text{(ii)} \ & R \stackrel{\tau}{\longrightarrow}
\K\left(S , E  \right) \stackrel{\otimes \id_{X^{\otimes 2}}}{\longrightarrow}
\K\left(F \otimes S , E^{\otimes 2} \right)
\stackrel{\otimes \id_{X^{\otimes 2}}}{\longrightarrow} \cdots,
\end{align*}
where we omit the zero entries for convenience. The first form of $R_\infty$ shows that it is a Hilbert $B_\infty$-module, whereas the second form shows that the left multiplication by elements of $A_\infty$ defines a left action of $A_\infty$ on $R_\infty$. Hence $R_\infty$ is an $A_\infty$-$B_\infty$-correspondence.

In a dual way we define $S_\infty$ as the subcorrespondence of $X_\infty$ generated by the copies of
\begin{align*}
S \, , \K\left(\left[\begin{array}{c|c} E^{\otimes k} & 0\\
0 & 0
\end{array}\right] ,\left[\begin{array}{c|c} 0 & 0\\ 0 &
F^{\otimes k} \otimes S \end{array}\right] \right),
\end{align*}
and
\begin{align*}
\K\left(\left[\begin{array}{c|c} 0 & R\\ 0 & 0
\end{array}\right] ,\left[\begin{array}{c|c} 0 & 0\\ 0 &
F \end{array}\right] \right), \,
\K\left(\left[\begin{array}{c|c} 0 & E^{\otimes k}\otimes R\\ 0 & 0
\end{array}\right] ,\left[\begin{array}{c|c} 0 & 0\\ 0 &
F^{\otimes k+1} \end{array}\right] \right),
\end{align*}
for $k\geq 1$. By using Lemma \ref{L:compact ten pr} and writing $S_\infty$ as a direct limit in the following two forms
\begin{align*}
\text{(i)}  \ & S \stackrel{\tau \circ (\otimes \id_{X})}{\longrightarrow}
\K\left(E , F \otimes S \right) \stackrel{\otimes \id_{X^{\otimes 2}}}{\longrightarrow}
\K\left( E^{\otimes 2} , F^{\otimes 2}\otimes S  \right)
\stackrel{\otimes \id_{X^{\otimes 2}}}{\longrightarrow} \cdots ,\\
\text{(ii)} \ & S \stackrel{\tau}{\longrightarrow}
\K\left(R , F  \right) \stackrel{\otimes \id_{X^{\otimes 2}}}{\longrightarrow} \K\left( E
\otimes R, F^{\otimes 2} \right) \stackrel{\otimes \id_{X^{\otimes 2}}}{\longrightarrow} \cdots,
\end{align*}
we get that $S_\infty$ becomes a $B_\infty$-$A_\infty$-correspondence.

We must remark here on our use of the subscript $\infty$. For the correspondences  $E, F$ we denote by $E_\infty, F_\infty$ their Pimsner dilations, whereas for $R,S$ we denote by $R_\infty, S_\infty$ the subcorrespondences in the Pimsner dilation $X_\infty$ of the (injective) bipartite inflation $X$ of $S$ by $R$. Nevertheless, when the correspondences are non-degenerate, $R_\infty$ is the $X$-dilation $(\K(B,R), \id_X)_\infty$ and $S_\infty$ is the $X$-dilation $(\K(A,S), \id_X)_\infty$.

\begin{remark}\label{R:useful}
Note that $R_\infty$ and $S_\infty$ are both full left Hilbert bimodules (thus injective). In order to prove this  for, say $S_\infty$, it is enough (by its second form) to show that
\[
\overline{\K(E^{\otimes n} \otimes R, F^{\otimes n+1}) \K(E^{\otimes n} \otimes R, F^{\otimes n+1})^*} =\K( F^{\otimes n+1},  F^{\otimes n+1})
\]
for all $n \geq 1$. By Lemma \ref{L:cai for compact} it suffices to show that $\sca{E^{\otimes n} \otimes R, E^{\otimes n} \otimes R}$ contains a right c.a.i. for $F^{\otimes n+1}$, for all $n \geq 1$, a fact established in Proposition \ref{P:technical}. A symmetrical argument can be used for $R_\infty$.

Moreover, if $S$ is an imprimitivity bimodule, then $S_\infty$ is also right full, hence an imprimitivity bimodule as well. Indeed, by the first form of $S_\infty$ it suffices to show that
\[
\overline{\K(E^{\otimes n}, F^{\otimes n} \otimes S)^* \K(E^{\otimes n}, F^{\otimes n} \otimes S)}=\K(E^{\otimes n}, E^{\otimes n}),
\]
for all $n$. By Lemma \ref{L:cai for compact}, it is enough to show that $\sca{F^{\otimes n} \otimes S, F^{\otimes n}\otimes S}$ provides a right c.a.i. for $E^{\otimes n}$. Note that $F\otimes S = S \otimes E$, hence $F^{\otimes n} \otimes S = S \otimes E^{\otimes n}$, thus $E^{\otimes n}$ is non-degenerate. Therefore
\begin{align*}
\sca{F^{\otimes n} \otimes S, F^{\otimes n}\otimes S}
& = \sca{S \otimes E^{\otimes n}, S \otimes E^{\otimes n}}\\
& = \sca{E^{\otimes n}, \phi_{E^{\otimes n}}(\sca{S,S})E^{\otimes n}}\\
& = \sca{E^{\otimes n}, \phi_{E^{\otimes n}}(A)E^{\otimes n}} \\
& = \sca{E^{\otimes n}, E^{\otimes n}},
\end{align*}
and the latter ideal provides a right c.a.i. for $E^{\otimes n}$.
\end{remark}

\begin{theorem}\label{T:main 2}
Let ${}_A E_A, {}_B F_B$ be regular $\ca$-correspondences. If $E \sse F$, then $E_\infty \sse F_\infty$.
\end{theorem}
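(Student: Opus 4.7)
The plan is to exploit the bipartite inflation machinery developed above together with the Pimsner dilation. Let $X = \left[\begin{smallmatrix} 0 & R \\ S & 0 \end{smallmatrix}\right]$ be the bipartite inflation of $S$ by $R$. Since $E$ and $F$ are regular, so are $R, S$, whence $X$ and $X^{\otimes 2}$ are regular. I would form the Pimsner dilation $X_\infty$ over $(A\oplus B)_\infty = A_\infty \oplus B_\infty$; a level-by-level reading of the directed-limit description of $X_\infty$ shows that the off-diagonal (bipartite) shape persists, so that after identification
\[
X_\infty \approx \left[\begin{smallmatrix} 0 & R_\infty \\ S_\infty & 0 \end{smallmatrix}\right]
\]
as a correspondence over $A_\infty \oplus B_\infty$, with $R_\infty$ (resp.\ $S_\infty$) the $A_\infty$-$B_\infty$- (resp.\ $B_\infty$-$A_\infty$-) subcorrespondences introduced above.

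The heart of the argument will be the identification $X_\infty^{\otimes 2} \approx (X^{\otimes 2})_\infty$. I would verify the three hypotheses of Theorem \ref{T:main 7}: the tensor product $X_\infty^{\otimes 2}$ of essential Hilbert bimodules remains an essential Hilbert bimodule; it contains $X^{\otimes 2}$ as a subcorrespondence; and since $X_\infty = \overline{X \cdot (A\oplus B)_\infty}$, continuity of the tensor product gives $X_\infty^{\otimes 2} = \overline{X^{\otimes 2} \cdot (A\oplus B)_\infty}$. The Cuntz-Pimsner condition $\O_{X_\infty^{\otimes 2}} = \O_{X^{\otimes 2}}$ follows because each of these algebras is realized as the fixed-point subalgebra of $\O_{X_\infty} = \O_X$ under the $\bbZ/2$-subaction of the gauge action, and these subalgebras coincide.

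Granted this identification, Lemma \ref{L:technical} applied to the bipartite description of $X_\infty$ gives
\[
X_\infty^{\otimes 2} \approx \left[\begin{smallmatrix} R_\infty \otimes_{B_\infty} S_\infty & 0 \\ 0 & S_\infty \otimes_{A_\infty} R_\infty \end{smallmatrix}\right],
\]
while on the other hand the directed system for $(X^{\otimes 2})_\infty = (E\oplus F)_\infty$ decouples across the direct sum, so that $(X^{\otimes 2})_\infty \approx \left[\begin{smallmatrix} E_\infty & 0 \\ 0 & F_\infty \end{smallmatrix}\right]$. Matching entries will yield $R_\infty \otimes_{B_\infty} S_\infty \approx E_\infty$ and $S_\infty \otimes_{A_\infty} R_\infty \approx F_\infty$, which is exactly $E_\infty \sse F_\infty$ witnessed by $R_\infty, S_\infty$. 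The main obstacle I anticipate is the Cuntz-Pimsner identification in the key step, which requires a careful analysis of the canonical injection $\O_{Y^{\otimes 2}} \hookrightarrow \O_Y$ for $Y = X$ and $Y = X_\infty$. A more hands-on alternative would bypass Theorem \ref{T:main 7} and instead compute the two tensor products directly via Proposition \ref{P:tens reg shifts} with $Z = X^{\otimes 2}$, matching compatible directed-limit descriptions of $R_\infty, S_\infty$ and invoking the c.a.i.\ properties secured by Proposition \ref{P:technical}; this is less conceptual but avoids the Cuntz-Pimsner comparison.
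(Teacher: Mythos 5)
Your proposal is correct, and your primary route is genuinely different from the paper's: the paper's own proof is precisely the ``hands-on alternative'' you sketch in your final sentence. It never forms $X_\infty^{\otimes 2}$ or touches the Cuntz--Pimsner algebras at all; instead it realizes $S_\infty$, $R_\infty$ and $A_\infty$ as the $X^{\otimes 2}$-dilations of $\K(E\otimes R, F^{\otimes 2})$, $\K(F, E\otimes R)$ and $\K(E\otimes R)$, checks the c.a.i.\ hypothesis via Proposition \ref{P:technical}, and applies Proposition \ref{P:tens reg shifts} to get $S_\infty\otimes_{A_\infty}R_\infty=(\K(F,F^{\otimes 2}),\id_{X^{\otimes 2}})_\infty=F_\infty$, with $R_\infty\otimes_{B_\infty}S_\infty=E_\infty$ by symmetry. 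Your main route, via Theorem \ref{T:main 7} applied to $X^{\otimes 2}\subseteq X_\infty^{\otimes 2}$, does work, but its entire weight rests on the step you flag: $\O_{X_\infty^{\otimes 2}}=\O_{X^{\otimes 2}}$. To carry it out you must show that for a covariant representation $(\pi,t)$ of the regular correspondence $X$ the $\mathbb{Z}/2$-fixed-point algebra $\overline{\Span}\{t^n(\bar\xi)t^m(\bar\eta)^*: n\equiv m \pmod 2\}$ equals $\ca(\pi,t^2|_{X^{\otimes 2}})$ --- the odd--odd words are absorbed by inserting $\pi(a_i)=\psi_t(\phi_X(a_i))\in\overline{\Span}\, t(X)t(X)^*$, which uses covariance and $J_X=A$, hence regularity, in an essential way --- and then invoke gauge-invariant uniqueness (with a square root of the gauge parameter) to identify this subalgebra with the universal $\O_{X^{\otimes 2}}$; the same argument applied to $X_\infty$ then matches the two fixed-point algebras inside $\O_X=\O_{X_\infty}$. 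What each approach buys: yours is more conceptual and recycles the minimality/uniqueness machinery of Section \ref{S:shifts}; the paper's stays entirely at the Hilbert-module level and, as a by-product, yields the identities $(R\otimes_B S)_\infty=R_\infty\otimes_{B_\infty}S_\infty$ and $(S\otimes_A R)_\infty=S_\infty\otimes_{A_\infty}R_\infty$ recorded in the remark after the theorem, which are reused verbatim in the proof of Theorem \ref{T:se infty}, where a lag-$m$ shift equivalence carries no $\mathbb{Z}/2$-symmetry for your fixed-point trick to exploit.
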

\begin{proof}
Assume that $E, F$ are elementary strong shift equivalent via $R, S$. It suffices to prove that the interior tensor product $S_\infty \otimes_{A_\infty} R_\infty$ is (unitarily equivalent to) $F_\infty$. Then, by duality, $R_\infty \otimes_{B_\infty} S_\infty$ is (unitarily equivalent to) $E_\infty$, hence $E_\infty \sse F_\infty$. Towards this end we view $S_\infty$ as the $X^{\otimes 2}$-dilation of $\K(E\otimes R, F^{\otimes 2})$, i.e.,
\begin{align*}
\K\left( E\otimes R, F^{\otimes 2} \right) \stackrel{\otimes \id_{X^{\otimes
2}}}{\longrightarrow} \K\left( E^{\otimes 2}\otimes R, F^{\otimes 3}
\right) \stackrel{\otimes \id_{X^{\otimes 2}}}{\longrightarrow} \cdots,
\end{align*}
and $R_\infty$ as the $X^{\otimes 2}$-dilation of $\K(F, E\otimes R)$, i.e.,
\begin{align*}
\K\left( F , E\otimes R \right) \stackrel{\otimes \id_{X^{\otimes
2}}}{\longrightarrow} \K\left( F^{\otimes 2},  E^{\otimes 2}\otimes
R \right) \stackrel{\otimes \id_{X^{\otimes 2}}}{\longrightarrow} \cdots .
\end{align*}
Note that $A_\infty$ can be written as $(\K(E\otimes R), \id_{X^{\otimes 2}})_\infty$. By Proposition \ref{P:technical}, the ideal $\sca{E^{\otimes k} \otimes R, E^{\otimes k} \otimes R}$ provides a right c.a.i. for $F^{\otimes k+1}$, for every $k\geq 0$. Therefore, $\sca{E\otimes R \otimes (X^{\otimes 2})^{\otimes n},E\otimes R \otimes (X^{\otimes 2})^{\otimes n}}$ provides a right c.a.i. for $F^{\otimes 2} \otimes (X^{\otimes 2})^{\otimes n}$, for every $n\geq 0$. Thus, Proposition \ref{P:tens reg shifts} applies, and we obtain that $S_\infty \otimes_{A_\infty} R_\infty$ is $(\K(F, F^{\otimes 2}), \id_{X^{\otimes 2}})_\infty = F_\infty$.
\end{proof}

\begin{remark}
In the above proof we have actually shown that
\begin{align*}
(R\otimes_B S)_\infty = E_\infty =  R_\infty \otimes_{B_\infty} S_\infty,
\end{align*}
and that
\begin{align*}
(S\otimes_A R)_\infty = F_\infty =  S_\infty \otimes_{A_\infty} R_\infty.
\end{align*}
\end{remark}

An immediate consequence of Theorem \ref{T:main 2} is the following.

\begin{theorem}
Let ${}_A E_A, {}_B F_B$ be regular $\ca$-correspondences. If $E\tsse F$, then $E_\infty \tsse F_\infty$.
\end{theorem}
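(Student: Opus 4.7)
The plan is a direct induction on the length of the chain witnessing the transitive closure, reducing everything to a repeated application of Theorem~\ref{T:main 2}. By the definition of $\tsse$, the hypothesis $E \tsse F$ provides $\ca$-correspondences
\[
E = T_0 \sse T_1 \sse \cdots \sse T_n = F.
\]

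First I would argue that each $T_i$ in the chain may be taken to be regular. The endpoints $T_0 = E$ and $T_n = F$ are regular by hypothesis, and regularity can be propagated through each elementary step using the observation from \cite{MuPasTom08} recorded in the paragraph just before Proposition~\ref{P:technical}: if both endpoints of an elementary strong shift equivalence are regular then the bridging correspondences $R, S$ are themselves regular, and the associated tensor products then inherit regularity as well (injectivity transfers because $\otimes \id$ is isometric on $\L$ by \cite[Prop.~4.7]{Lan95}, and the compactness condition transfers via Lemma~\ref{L:compact ten pr}).

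Once every link of the chain consists of regular correspondences, Theorem~\ref{T:main 2} applies to each consecutive pair $T_i \sse T_{i+1}$ and yields $T_{i,\infty} \sse T_{i+1,\infty}$ for every $0 \le i \le n-1$. Concatenating these elementary strong shift equivalences produces
\[
E_\infty = T_{0,\infty} \sse T_{1,\infty} \sse \cdots \sse T_{n,\infty} = F_\infty,
\]
which is exactly the statement $E_\infty \tsse F_\infty$ under the definition of the transitive closure. The only mildly subtle ingredient is the propagation of regularity through the intermediate links; once that is secured, the remainder of the argument is pure bookkeeping on top of Theorem~\ref{T:main 2}.
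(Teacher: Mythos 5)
Your overall strategy is exactly the paper's: apply Theorem~\ref{T:main 2} to each link $T_i \sse T_{i+1}$ of the chain and concatenate the resulting elementary equivalences $(T_i)_\infty \sse (T_{i+1})_\infty$. The one point where you go beyond the paper --- the claim that regularity of the endpoints propagates through the chain --- is the one point that does not work. The observation you cite from the paragraph before Proposition~\ref{P:technical} runs in the wrong direction: it assumes that \emph{both} correspondences in an elementary strong shift equivalence are regular and concludes that the bridges $R,S$ are regular; it cannot be used to deduce regularity of $T_{i+1}$ from regularity of $T_i$, so your propagation step is circular. It is also false in general: take the graph correspondences of $R=\left[\begin{smallmatrix}1&0\end{smallmatrix}\right]$ and $S=\left[\begin{smallmatrix}1\\0\end{smallmatrix}\right]$; then $R\otimes S$ corresponds to the $1\times 1$ matrix $[1]$, which is regular, while $S\otimes R$ corresponds to $\left[\begin{smallmatrix}1&0\\0&0\end{smallmatrix}\right]$, whose left action is not injective. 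Thus an elementary strong shift equivalence can connect a regular correspondence to a non-regular one.

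In practice this means the argument (yours and the paper's alike) needs the intermediate links $T_1,\dots,T_{n-1}$ to be regular as part of the hypothesis: the section's standing assumption is that all correspondences in play are regular, and the paper applies Theorem~\ref{T:main 2} to each consecutive pair without comment, so this is the intended reading of $\tsse$ here --- one needs each $(T_i)_\infty$ to exist and Theorem~\ref{T:main 2} to be applicable at every link. If you delete the propagation paragraph and instead stipulate that the chain is taken within the class of regular correspondences, your proof coincides with the paper's; if you genuinely want to allow non-regular intermediates, a different argument would be required.
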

\begin{proof}
Assume that $E \tsse F$ via a sequence of $T_i$, for $i=0,\dots,n$. Then $T_i \sse T_{i+1}$, for $i=0,\dots,n$. By Theorem \ref{T:main 2}, we get that $(T_i)_\infty \sse (T_{i+1})_\infty$ for $i=0,\dots,n$. Since $T_0=E$ and $T_n=F$, then $E_\infty \tsse F_\infty$.
\end{proof}

We use Theorem \ref{T:main 2} to understand the passage to dilations for the strong Morita equivalence.

\begin{theorem}\label{T:sme infty}
Let ${}_A E_A, {}_B F_B$ be regular $\ca$-correspondences. If $E\sme F$, then $E_\infty \sme F_\infty$.
\end{theorem}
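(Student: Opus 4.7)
The plan is to reduce Theorem \ref{T:sme infty} to the previously established machinery by combining Theorem \ref{T:main 1} (which exhibits strong Morita equivalence as a particularly well-behaved special case of elementary strong shift equivalence), Theorem \ref{T:main 2} (which passes $\sse$ through to Pimsner dilations), and Proposition \ref{P:sse gives sme} (which upgrades $\sse$ back to $\sme$ in the presence of an imprimitivity bimodule witness). The key bridge between the second and third of these is Remark \ref{R:useful}, which, when the shift equivalence witness is an imprimitivity bimodule, says its dilation is still an imprimitivity bimodule.

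In more detail, I would argue as follows. Starting from $E \sme F$ via an imprimitivity bimodule ${}_A M_B$, the proof of Theorem~\ref{T:main 1} produces a distinguished elementary strong shift equivalence $E \sse F$ via $R := E \otimes_A M$ and $S := M^*$. Since $M$ is an $A$-$B$-imprimitivity bimodule, $S = M^*$ is a $B$-$A$-imprimitivity bimodule. Now apply the bipartite inflation construction of Section~\ref{S:equiv} to $R$ and $S$ and form $R_\infty$, $S_\infty$; by Theorem~\ref{T:main 2} and the remark immediately following it, we obtain
\begin{equation*}
E_\infty \;\approx\; R_\infty \otimes_{B_\infty} S_\infty, \qquad F_\infty \;\approx\; S_\infty \otimes_{A_\infty} R_\infty,
\end{equation*}
so that $E_\infty \sse F_\infty$ via the pair $(R_\infty, S_\infty)$. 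By the second paragraph of Remark~\ref{R:useful}, the fact that $S$ is an imprimitivity bimodule forces $S_\infty$ to be an imprimitivity bimodule as well (the relevant computation $F \otimes_B S \approx S \otimes_A E$ and the non-degeneracy of $E$, guaranteed by $E \sme F$, are exactly what is needed). Finally, Proposition~\ref{P:sse gives sme} applied to the elementary strong shift equivalence $E_\infty \sse F_\infty$ witnessed by $S_\infty$ yields $E_\infty \sme F_\infty$, completing the proof.

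The only place where care is required is the bookkeeping verifying that the particular $S = M^*$ arising from the Morita equivalence is precisely the pairing element to which Remark~\ref{R:useful} applies; that is, one must make sure that the hypotheses ``$S$ is an imprimitivity bimodule'' and ``$E$ is non-degenerate'' are both visible in the setup. Both are automatic here: $M^*$ is an imprimitivity bimodule because $M$ is, and $E \sme F$ forces $E$ (and $F$) to be non-degenerate, as noted immediately after Definition~\ref{D:sme}. Once these two observations are recorded, the rest is a direct application of results already established in the paper, and there is no genuine additional obstacle.
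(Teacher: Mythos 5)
Your proposal is correct and follows essentially the same route as the paper's own proof: pass from $E \sme F$ to the elementary strong shift equivalence via $E\otimes_A M$ and $M^*$, dilate via Theorem~\ref{T:main 2} and the remark following it, invoke Remark~\ref{R:useful} to see that the dilated witness coming from the imprimitivity bimodule is again an imprimitivity bimodule, and conclude with Proposition~\ref{P:sse gives sme}. Your bookkeeping is in fact slightly more careful than the paper's, which in this spot miswrites the witnessing pair as ``$E\otimes_A M^*$ and $M$''.
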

\begin{proof}
Assume that $E \sme F$; then $E$ and $F$ are non-degenerate and $E \sse F$ via $E\otimes_A M^*$ and $M$ (see Theorem \ref{T:main 1}). Therefore, by Theorem \ref{T:main 2}, $E_\infty \sse F_\infty$ via $(E\otimes M^*)_\infty$ and $M_\infty$. Since $M$ is assumed an imprimitivity bimodule, then $M_\infty$ is also an imprimitivity bimodule by Remark \ref{R:useful}. By Proposition \ref{P:sse gives sme} we conclude that $E_\infty \sme F_\infty$.
\end{proof}

\begin{remark}
An alternative way of attacking Theorem \ref{T:sme infty} appears in \cite[Remark 3.6]{MuhSol00} where Muhly and Solel propose that, when $E \sme F$ via an imprimitivity bimodule $X$, then $E_\infty \sme F_\infty$ via the imprimitivity bimodule $\fX= A_\infty \otimes_A X \otimes_B B_\infty$. It may take no effort to show that $E\otimes_A A_\infty = A_\infty \otimes_A E$, as sets, and conclude that $E_\infty \otimes_{A_\infty} \fX = \fX \otimes_{B_\infty} F_\infty$. However there is a problem with the definition of $\fX$. It is easy to define a left and right action of $A$ on $A_\infty$ (simply by multiplication), but in order to get the tensor product $A_\infty \otimes_A X$ (and so $\fX$) an inner product of $A_\infty$ taking values in $A$ is needed. The existence of such an inner product is not obvious to us.
\end{remark}

We have arrived to the last relation to be examined when passing to Pimsner dilations. Our next Theorem is one of the central results in this paper and will enable us to obtain new information even for concrete classes of operator algebras, i.e., Cuntz-Krieger $\ca$-algebras.

\begin{theorem}\label{T:se infty}
Let ${}_A E_A, {}_B F_B$ be regular $\ca$-correspondences. If $E \se F$ with lag $m$, then $E_\infty \se F_\infty$ with lag $m$.
\end{theorem}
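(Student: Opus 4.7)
The plan is to set $R' := R_\infty$ and $S' := S_\infty$, where $R_\infty, S_\infty$ are the subcorrespondences of $X_\infty$ associated, as in the proof of Theorem \ref{T:main 2}, to the bipartite inflation $X = \left[\begin{array}{c|c} 0 & R \\ S & 0\end{array}\right]$ of $S$ by $R$; then I will verify the four conditions of Definition \ref{D:se} at the dilated level.

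Condition (i) of Definition \ref{D:se} gives $E^{\otimes m} \sse F^{\otimes m}$ via $R,S$. Applying Theorem \ref{T:main 2} (and the remark that follows it) yields
\[
(E^{\otimes m})_\infty \approx R_\infty \otimes_{B_\infty} S_\infty, \qquad (F^{\otimes m})_\infty \approx S_\infty \otimes_{A_\infty} R_\infty,
\]
in which the coefficient algebras $A_\infty, B_\infty$ computed from $E^{\otimes m}, F^{\otimes m}$ coincide with the coefficient algebras of $E_\infty, F_\infty$: the directed systems $\K(E^{\otimes mn}) \to \K(E^{\otimes m(n+1)})$ and $\K(F^{\otimes mn}) \to \K(F^{\otimes m(n+1)})$ are cofinal in the directed systems defining the latter.

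Next, I would identify $(E^{\otimes m})_\infty \approx (E_\infty)^{\otimes m}$ and $(F^{\otimes m})_\infty \approx (F_\infty)^{\otimes m}$ by invoking the uniqueness criterion of Theorem \ref{T:main 7}. Indeed, $(E_\infty)^{\otimes m}$ is an essential Hilbert $A_\infty$-bimodule (tensor products over a common coefficient algebra preserve both the Hilbert bimodule and essentiality structures), contains $E^{\otimes m}$ as a subcorrespondence with $(E_\infty)^{\otimes m} = \overline{E^{\otimes m} \cdot A_\infty}$ (using $E_\infty = \overline{E \cdot A_\infty}$), and its Cuntz-Pimsner algebra agrees with $\O_{E^{\otimes m}}$ since both embed into $\O_E \simeq \O_{E_\infty}$ as the fixed-point subalgebra under the $\bbZ/m$-subaction of the gauge circle action. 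This settles the first condition of shift equivalence at the dilated level with lag $m$:
\[
(E_\infty)^{\otimes m} \approx R_\infty \otimes_{B_\infty} S_\infty, \qquad (F_\infty)^{\otimes m} \approx S_\infty \otimes_{A_\infty} R_\infty.
\]

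For the intertwining conditions $S_\infty \otimes_{A_\infty} E_\infty \approx F_\infty \otimes_{B_\infty} S_\infty$ and $E_\infty \otimes_{A_\infty} R_\infty \approx R_\infty \otimes_{B_\infty} F_\infty$, I would iterate the base-level intertwinings from condition (ii) of Definition \ref{D:se} to obtain $S \otimes_A E^{\otimes k} \approx F^{\otimes k} \otimes_B S$ and $E^{\otimes k} \otimes_A R \approx R \otimes_B F^{\otimes k}$ for every $k \geq 0$, and push them through the direct-limit descriptions of $R_\infty, S_\infty, E_\infty, F_\infty$ along the lines of Propositions \ref{P:wz=zw} and \ref{P:tens reg shifts}. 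The main obstacle lies in this last step: constructing termwise unitaries between the directed systems representing the two sides of each intertwining, and checking their compatibility with the structure maps $\otimes \id_X$ so that a single unitary is obtained in the limit. A secondary technical concern is the Cuntz-Pimsner algebra identification $\O_{(E_\infty)^{\otimes m}} \simeq \O_{E^{\otimes m}}$, which requires justification via the standard gauge-invariant subalgebra interpretation of tensor-power Cuntz-Pimsner algebras.
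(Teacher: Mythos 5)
Your choice of $R_\infty$ and $S_\infty$ as the implementing correspondences is the same as the paper's, and your treatment of condition (i) is essentially sound: the paper obtains $(E^{\otimes m})_\infty = (E_\infty)^{\otimes m}$ by a direct computation with Proposition \ref{P:tens reg shifts} (writing $E_\infty$ as $(\K(E^{\otimes k}, E^{\otimes k+1}),\id_E)_\infty$ for shifted $k$ and tensoring), which is cleaner than your detour through the uniqueness criterion of Theorem \ref{T:main 7} --- that route forces you to verify $\O_{(E_\infty)^{\otimes m}} \simeq \O_{E^{\otimes m}}$, a nontrivial side issue the paper never needs to confront.

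The genuine gap is condition (ii). You correctly identify the intertwinings $E_\infty \otimes_{A_\infty} R_\infty \approx R_\infty \otimes_{B_\infty} F_\infty$ (and its dual) as ``the main obstacle,'' but you do not resolve it, and the strategy you sketch --- constructing termwise unitaries between the two directed systems and checking compatibility with the connecting maps $\otimes\id_X$ --- is not what the paper does and is unlikely to go through as stated: the unitaries implementing $E^{\otimes k}\otimes R \approx R \otimes F^{\otimes k}$ at each finite level have no evident compatibility with the structure maps of the limits. The paper instead first uses Proposition \ref{P:tens reg shifts} to realize both sides as single dilations, $E_\infty \otimes R_\infty = (\K(S, E^{\otimes m+1}),\id_{X^{\otimes 2}})_\infty$ and $R_\infty \otimes F_\infty = (\K(F^{\otimes m-1}, E^{\otimes m}\otimes R),\id_{X^{\otimes 2}})_\infty$, then introduces the auxiliary correspondences $U = \left[\begin{array}{c|c} 0 & E\otimes R \\ F\otimes S & 0\end{array}\right]$ and $V = \left[\begin{array}{c|c} 0 & E^{\otimes m-1}\otimes R \\ F^{\otimes m-1}\otimes S & 0\end{array}\right]$, verifies $X\otimes U = U \otimes X$ and $X \otimes V = V \otimes X$ using the base-level intertwinings, and applies Proposition \ref{P:wz=zw} twice to embed each side into the other. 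The final and essential point --- which your proposal does not anticipate --- is that the two embeddings compose to the identity precisely because $U \otimes V = V \otimes U = X^{\otimes 4}$, so that the composed map sends $[q]$ to $[q \otimes \id_{X^{\otimes 4}}] = [q]$ in the $X^{\otimes 2}$-dilation. Without this mutual-embedding-plus-cancellation device, or a worked-out substitute, the proof of the intertwining conditions is missing.
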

\begin{proof}
Let $R, S$ such that
\begin{align*}
&\text{\textup{(i)}}\quad  E^{\otimes m}=R \otimes_A S, \, F^{\otimes
m}=S
\otimes_A R,\\
&\text{\textup{(ii)}}\quad S\otimes_A E= F \otimes_B S, \, E\otimes_A R =
R \otimes_B F.
\end{align*}
Thus $E^{\otimes m} \sse F^{\otimes m}$. Since $E$ is regular, we have
\[
E_\infty= (\K(E,E^{\otimes 2}),\id_E)_\infty =  (\K(E^{\otimes 2}, E^{\otimes 3}),\id_E)_\infty.
\]
Proposition \ref{P:tens reg shifts} implies that
\begin{align*}
(E_\infty)^{\otimes 2} 
& = (\K(E^{\otimes 2},E^{\otimes 3}),\id_E)_\infty \otimes_{A_\infty} (\K(E, E^{\otimes 2}),\id_E)_\infty \\
& = (\K(E, E^{\otimes 3}),\id_E)_\infty \\
& = (E^{\otimes 2})_\infty.
\end{align*}
A repetitive use of this argument shows that $(E^{\otimes m})_\infty= (E^{\otimes m-1})_\infty \otimes E_\infty = (E_\infty)^{\otimes m}$. Thus, by the remark following Theorem \ref{T:main 2}, we obtain
\begin{align*}
(E_{\infty})^{\otimes m}= (E^{\otimes m})_\infty = (R\otimes_B S)_\infty= R_\infty
\otimes_{B_\infty} S_\infty ,
\end{align*}
and in a similar fashion $(F_\infty)^{\otimes m}= S_\infty \otimes_{A_\infty} R_\infty$.

What remains to be proved, in order to complete the proof, is that
\begin{align*}
E_\infty \otimes_{A_\infty} R_\infty = R_\infty
\otimes_{B_\infty} F_\infty.
\end{align*}
Since $E$ is regular, we get that $E_\infty$ coincides with
$(\K(E^{\otimes m}, E^{\otimes m+1}), \id_{E})_\infty= (\K(E^{\otimes m}, E^{\otimes m+1}), \id_{X^{\otimes 2}})_\infty$, so
\begin{align*}
E_\infty = (\K(E^{\otimes m}, E^{\otimes m+1}), \id_{X^{\otimes 2}})_\infty \, ,
R_\infty = (\K(S, E^{\otimes m}), \id_{X^{\otimes 2}})_\infty,
\end{align*}
where $X$ is the bipartite inflation of $R$ and $S$.

Now $E^{\otimes m+ mn}= E^{\otimes m} \otimes (X^{\otimes 2})^n$, $n\geq 0$, and, by Lemma \ref{L:cai ten pr}, the ideal $\sca{E^{\otimes m+ mn}, E^{\otimes m+ mn}}= \sca{E^{\otimes m}\otimes (X^{\otimes 2})^n, E^{\otimes m}\otimes (X^{\otimes 2})^n}$ provides a right c.a.i. for $E \otimes E^{\otimes m+ mn}= E^{\otimes 1+ m+ mn}=  E^{\otimes m+1} \otimes (X^{\otimes 2})^n$. Hence Proposition \ref{P:tens reg shifts} implies that
\begin{align*}
E_\infty \otimes_{A_\infty} R_\infty = (\K(S, E^{\otimes
m+1}), \id_{X^{\otimes 2}})_\infty.
\end{align*}

Similarly, we express
\begin{align*}
R_\infty= (\K(F^{\otimes m}, E^{\otimes m}\otimes R), \id_{X^{\otimes 2}})_\infty \, , F_\infty = (\K(F^{\otimes m-1}, F^{\otimes m}), \id_{X^{\otimes 2}})_\infty;
\end{align*}
then
\begin{align*}
R_\infty \otimes_{B_\infty} F_\infty = (\K(F^{\otimes m-1},
E^{\otimes m} \otimes R), \id_{X^{\otimes 2}})_\infty,
\end{align*}
since $F^{\otimes m+mn}= F^{\otimes m} \otimes (X^{\otimes 2})^n$ and $\sca{F^{\otimes m+mn}, F^{\otimes m+mn}}$ provides a right c.a.i. for $R \otimes F^{\otimes m+mn} = E^{\otimes m} \otimes R \otimes F^{\otimes mn} = E^{\otimes m} \otimes R \otimes (X^{\otimes 2})^n$.

To prove that $E_\infty \otimes R_\infty = R_\infty \otimes F_\infty$ we show that each one is unitarily equivalent to a submodule of the other. First we observe that for $U= \left[ \begin{array}{c|c} 0 & E\otimes R\\ F \otimes S & 0 \end{array} \right]$ we get
\begin{align*}
X \otimes U
& =
\left[ \begin{array}{c|c} 0 &  R\\ S & 0 \end{array} \right] \otimes
\left[ \begin{array}{c|c} 0 & E\otimes R\\ F \otimes S & 0 \end{array} \right]\\
& =
\left[ \begin{array}{c|c} R \otimes F \otimes S & 0\\  0 & S \otimes E \otimes R  \end{array} \right] \\
& =
\left[ \begin{array}{c|c} E \otimes R \otimes S & 0\\  0 & F \otimes S \otimes R  \end{array} \right] \\
& =
U \otimes X,
\end{align*}
Thus $X^{\otimes 2} \otimes U = U \otimes X^{\otimes 2}$ and, by Proposition \ref{P:wz=zw}, we have that the correspondence
\begin{align*}
R_\infty \otimes_{B_\infty} F_\infty= (\K(F^{\otimes m-1}, E^{\otimes m} \otimes R), \id_{X^{\otimes 2}})_\infty
\end{align*}
is unitarily equivalent, via some unitary $u$, to a submodule of
\begin{align*}
& (\K(F^{\otimes m-1}\otimes U, E^{\otimes m} \otimes R\otimes U),
\id_{X^{\otimes 2}})_\infty=\\
& \qquad \qquad \qquad =(\K(F^{\otimes m-1} \otimes F\otimes S, E^{\otimes m+1}\otimes R\otimes S), \id_{X^{\otimes 2}})_\infty\\
& \qquad \qquad \qquad =(\K(S\otimes R\otimes S, E^{\otimes m+1}\otimes R\otimes S), \id_{X^{\otimes 2}})_\infty\\
& \qquad \qquad \qquad =(\K(S\otimes X^{\otimes 2}, E^{\otimes m+1}\otimes X^{\otimes 2}), \id_{X^{\otimes
2}})_\infty\\
& \qquad \qquad \qquad =(\K(S, E^{\otimes m+1}), \id_{X^{\otimes
2}})_\infty\\
& \qquad \qquad \qquad = E_\infty \otimes R_\infty.
\end{align*}
Also define $V= \left[ \begin{array}{c|c} 0 &  E^{\otimes m-1}\otimes R\\ F^{\otimes m-1}\otimes S & 0 \end{array} \right]$ and verify that $X\otimes V= V \otimes X$. Thus, again by Proposition \ref{P:wz=zw}, the correspondence
\begin{align*}
E_\infty \otimes R_\infty= (\K(S, E^{\otimes m+1}), \id_{X^{\otimes
2}})_\infty
\end{align*}
is unitarily equivalent, via some unitary $v$, to a submodule of
\begin{align*}
& (\K(S\otimes V, E^{\otimes m+1}\otimes V), \id_{X^{\otimes 2}})=\\
& \qquad \qquad \qquad =(\K(F^{\otimes 2m-1}, E^{\otimes 2m}\otimes R), \id_{X^{\otimes
2}})_\infty\\
& \qquad \qquad \qquad =(\K(F^{\otimes m-1} \otimes X^{\otimes 2}, E^{\otimes m} \otimes X^{\otimes 2} \otimes R), \id_{X^{\otimes
2}})_\infty\\
& \qquad \qquad \qquad =(\K(F^{\otimes m-1}\otimes X^{\otimes 2}, E^{\otimes m}\otimes R \otimes X^{\otimes 2}), \id_{X^{\otimes
2}})_\infty\\
& \qquad \qquad \qquad= (\K(F^{\otimes m-1}, E^{\otimes m}\otimes R), \id_{X^{\otimes
2}})_\infty\\
& \qquad \qquad \qquad= R_\infty \otimes F_\infty.
\end{align*}
It remains to show that $uv=\id$ and $vu=\id$. Note that $uv$ maps an element $[q]$ of any compact shift considered above (either it is in a $\ca$-algebra, either it is an element of the module) to $[q\otimes \id_{U\otimes V}]= [q\otimes \id_{X^{\otimes 4}}]$, since
\begin{align*}
U\otimes V
& =
\left[ \begin{array}{c|c} 0 & E\otimes R\\ F \otimes S & 0 \end{array} \right]
\otimes
\left[ \begin{array}{c|c} 0 &  E^{\otimes m-1}\otimes R\\ F^{\otimes m-1}\otimes S & 0 \end{array} \right]\\
& =
\left[ \begin{array}{c|c}  E\otimes R \otimes F^{\otimes m-1}\otimes S & 0\\ 0 & F \otimes S\otimes E^{\otimes m-1} \otimes R \end{array} \right]\\
& =
\left[\begin{array}{c|c}  E\otimes E^{\otimes m-1} \otimes R \otimes S & 0\\ 0 & F \otimes F^{\otimes m-1} \otimes S \otimes R \end{array}\right]\\
& =
\left[ \begin{array}{c|c}  E^{\otimes m} \otimes E^{\otimes m} & 0\\ 0 & F^{\otimes m} \otimes F^{\otimes m} \end{array}\right] \\
& = 
X^{\otimes 4}.
\end{align*}
However, throughout the proof we  have used exclusively $X^{\otimes 2}$-dilations and so $[q\otimes \id_{X^{\otimes 4}}]=[q]$. Therefore $uv=\id$. In a similar way $vu=\id$, since $V\otimes U= X^{\otimes 4}$, and the proof is complete.
\end{proof}

As mentioned in the Introduction, we were strongly motivated by \cite[Remark 5.5]{MPT08} and one of our aims was to check whether the alternative way proposed there to prove \cite[Theorem 3.14]{MPT08} could be achieved, i.e., to show that if $E \sse F$ then $E_\infty \sme F_\infty$. There is, though, a delicate point in this approach. In \cite[Remark 5.5]{MPT08} the authors claim that if $E$ is a non-degenerate and regular $\ca$-correspondence, then the Pimsner dilation $E_\infty$ \cite{Pim97} is an imprimitivity bimodule. This is false in general as we show in Remark \ref{R:not imp}. In fact it is true only in the context of \cite{Pim97}, because the $\ca$-correspondences there are always assumed full \cite[Remark 1.2(3)]{Pim97}. This is a consequence of another delicate point in Pimsner's theory, as his version of $\ca$-algebras, unfortunately denoted by the same symbols $\T_E$ and $\O_E$, are not what have eventually become the usual $\ca$-algebras generated by the images of $X$ and $A$, but they are only generated by the image of $X$; hence there is no reason to make a distinction between full and non-full correspondences in Pimsner's theory. Note that when $X$ is regular one can recover $A$ in Pimsner's $\ca$-algebra $\O_E$, but this is not the case for $\T_E$.

Of course, if one adds the full-ness condition, then the scheme described in \cite[Remark 5.5]{MPT08} can be implemented, as we are about to show. Note that the previous discussion and the next result settle Conjecture 1 appearing in the Introduction.

\begin{theorem}\label{T:full}
Let ${}_A E_A$ and ${}_B F_B$ be full, non-degenerate and regular $\ca$- correspondences. Then the following scheme
\begin{align*}
\xymatrix{
E\sme F     \ar@{=>}[d] \ar@{=>}[r]
& E \sse F  \ar@{=>}[d] \ar@{=>}[r]
& E\tsse F  \ar@{=>}[d] \ar@{=>}[r]
& E\se F    \ar@{=>}[d] \\
E_\infty \sme F_\infty     \ar@{<=>}[r]
& E_\infty \sse F_\infty  \ar@{<=>}[r]
& E_\infty \tsse F_\infty  \ar@{<=>}[r]
& E_\infty \se F_\infty }
\end{align*}
holds.
\end{theorem}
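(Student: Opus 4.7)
The plan is to piece together the diagram by invoking results already established in the paper, plus the equivalence of the four relations on the class of imprimitivity bimodules (Theorem~\ref{T:sse=se=sme}).

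First I would dispose of the vertical implications. The leftmost vertical arrow is Theorem~\ref{T:sme infty}; the next is Theorem~\ref{T:main 2}; the $\tsse$ arrow is the theorem immediately following Theorem~\ref{T:main 2}; and the rightmost is Theorem~\ref{T:se infty}. All four apply since $E$ and $F$ are regular. Next, the horizontal $\Rightarrow$ arrows in the bottom row follow from Theorem~\ref{T:main 1} applied to the dilations. So what is genuinely new in this statement is that the horizontal arrows in the bottom row can be reversed.

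To reverse them, the key observation is that $E_\infty$ and $F_\infty$ are imprimitivity bimodules. Non-degeneracy, regularity, and fullness of $E$ (respectively $F$) give this by the argument given right after Theorem~\ref{T:inj}: one verifies inductively that $\sca{E^{\otimes n+1},E^{\otimes n+1}}$ provides a right c.a.i.\ for $E^{\otimes n}$, so Lemma~\ref{L:cai for compact} yields $\overline{\K(E^{\otimes n+1},E^{\otimes n})\K(E^{\otimes n},E^{\otimes n+1})}=\K(E^{\otimes n})$, making $E_\infty$ full on both sides. Combined with Theorem~\ref{T:inj}, $E_\infty$ is thus an essential, full Hilbert bimodule, i.e.\ an imprimitivity bimodule, and similarly for $F_\infty$.

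Once $E_\infty$ and $F_\infty$ are known to be imprimitivity bimodules, Theorem~\ref{T:sse=se=sme} collapses all four equivalences: $E_\infty \se F_\infty \Rightarrow E_\infty \tsse F_\infty \Rightarrow E_\infty \sse F_\infty \Rightarrow E_\infty \sme F_\infty$. This closes every horizontal arrow in the bottom row into a double arrow, completing the scheme. The only step that requires actual thought (rather than citation) is checking fullness of $E_\infty$ on both sides, and this is exactly the inductive c.a.i.\ argument indicated above; I expect this to be the delicate point, since it is precisely where the fullness hypothesis on $E$ and $F$ is used and where the general (non-full) case breaks down, as illustrated by the infinite-tail-graph example in the discussion after Theorem~\ref{T:inj}.
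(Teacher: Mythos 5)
Your proposal is correct and follows essentially the same route as the paper: the authors' proof of this theorem is precisely the observation that fullness and non-degeneracy make $E_\infty$ and $F_\infty$ imprimitivity bimodules (via the inductive c.a.i.\ argument given after Theorem~\ref{T:inj}), after which Theorem~\ref{T:sse=se=sme} collapses the bottom row, with the vertical and remaining horizontal arrows supplied by Theorems~\ref{T:sme infty}, \ref{T:main 2}, \ref{T:se infty} and \ref{T:main 1}. You have correctly identified the one point requiring genuine verification, namely the two-sided fullness of the dilations.
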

\begin{proof}
By assumption $E$ and $F$ are full and non-degenerate. Therefore $E_\infty$ and $F_\infty$ are imprimitivity bimodules, and Theorem \ref{T:sse=se=sme} (that will follow) applies.
\end{proof}

An immediate consequence of Theorem \ref{T:full} and Theorem \ref{T:inj} is the following.

\begin{theorem}\label{T:se gives me}
Let ${}_A E_A$ and ${}_B F_B$ be full, non-degenerate and regular $\ca$- correspondences. If $E \se F$, then the corresponding Cuntz-Pimsner algebras are strong Morita equivalent.
\end{theorem}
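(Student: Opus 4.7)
My plan is to deduce this theorem as a direct corollary of the machinery already built up in the paper, rather than analyzing shift equivalence directly at the level of Cuntz--Pimsner algebras. The key observation is that the hypothesis (full, non-degenerate, regular) ensures we can pass to the Pimsner dilations and there convert shift equivalence into strong Morita equivalence, which is known to be preserved by $\O_{(\cdot)}$.

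The first step is to invoke Theorem~\ref{T:full}. Since $E$ and $F$ are full, non-degenerate and regular, their Pimsner dilations $E_\infty$ and $F_\infty$ are imprimitivity bimodules (as noted in the discussion immediately following Theorem~\ref{T:inj} and formalized in Remark~\ref{R:useful}). The assumption $E \se F$ then gives $E_\infty \se F_\infty$ via Theorem~\ref{T:se infty}, and the scheme in Theorem~\ref{T:full} collapses all four relations on the dilation level to the single relation $\sme$ (using the forthcoming Theorem~\ref{T:sse=se=sme} on imprimitivity bimodules). Hence we obtain $E_\infty \sme F_\infty$.

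The second step is to pass from strong Morita equivalence of the dilations to Morita equivalence of their Cuntz--Pimsner algebras. This is exactly the content of the Abadie--Eilers--Exel/Muhly--Solel theorem quoted in the Introduction: if two $\ca$-correspondences are strong Morita equivalent, then so are their Cuntz--Pimsner algebras. Applying this to $E_\infty$ and $F_\infty$ yields that $\O_{E_\infty}$ and $\O_{F_\infty}$ are strong Morita equivalent $\ca$-algebras.

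The final step is to identify these Cuntz--Pimsner algebras with the original ones. By Theorem~\ref{T:inj}, $\O_{E_\infty} \simeq \O_E$ and $\O_{F_\infty} \simeq \O_F$ as $\ca$-algebras, so the Morita equivalence transfers, giving $\O_E \sim_{\mathrm{SME}} \O_F$ as desired. There is no real obstacle in the argument, as the heavy lifting has already been done: the ``hard'' step was proving Theorem~\ref{T:se infty} (that shift equivalence is inherited by Pimsner dilations) together with the imprimitivity-bimodule rigidity result Theorem~\ref{T:sse=se=sme}. The present theorem simply packages these with Pimsner's dilation identity and the classical Muhly--Solel result.
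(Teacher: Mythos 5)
Your proposal is correct and follows essentially the same route as the paper's own proof: pass to Pimsner dilations, use Theorem~\ref{T:full} (which rests on Theorem~\ref{T:se infty} and Theorem~\ref{T:sse=se=sme}) to get $E_\infty \sme F_\infty$, apply the Muhly--Solel result to obtain $\O_{E_\infty} \sme \O_{F_\infty}$, and conclude via Theorem~\ref{T:inj}. The only minor slip is attributing the fact that $E_\infty$, $F_\infty$ are imprimitivity bimodules to Remark~\ref{R:useful} (which concerns $R_\infty$, $S_\infty$); the correct reference is the discussion following Theorem~\ref{T:inj}, but this does not affect the argument.
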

\begin{proof}
Suppose that $E \se F$. Then by Theorem \ref{T:full} we have that $E_\infty \sme F_\infty$. Therefore \cite[Theorem 3.5]{MuhSol00} implies that $\O_{E_\infty} \sme \O_{F_\infty}$ and the conclusion follows from Theorem \ref{T:inj}.
\end{proof}

In particular we obtain the following result for Cuntz-Krieger $\ca$-algebras mentioned in the Introduction.

\begin{corollary} \label{graph}
Let $\G$ and $\G'$ be finite graphs with no sinks or sources and let $A_{\G}$ and $A_{\G'}$ be their adjacent matrices. If $A_{\G} \se A_{\G'}$, in the sense of Williams, then the Cuntz-Krieger $\ca$-algebras $\O_{\G}$ and $\O_{\G'}$ are strong Morita equivalent.
\end{corollary}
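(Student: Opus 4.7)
The plan is to reduce the statement to Theorem~\ref{T:se gives me} by passing from matrices and graphs to their graph $\ca$-correspondences. First I would attach to each graph $\G$ its standard graph correspondence $X_\G$ over $A_\G^\circ := c(\G^{(0)}) \simeq \bbC^{|\G^{(0)}|}$, spanned by generators $\{\xi_e : e \in \G^{(1)}\}$, with $p_v \cdot \xi_e = \de_{v,s(e)}\xi_e$, $\xi_e \cdot p_v = \de_{v,r(e)}\xi_e$ and $\sca{\xi_e,\xi_f} = \de_{e,f}\, p_{r(e)}$. The absence of sinks makes $\phi_{X_\G}$ injective and $X_\G$ non-degenerate, while the absence of sources makes $\sca{X_\G,X_\G} = A_\G^\circ$, i.e.\ $X_\G$ is full; finiteness of $\G$ forces $\phi_{X_\G}(A_\G^\circ) \subseteq \K(X_\G)$, so $X_\G$ is regular. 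I would then invoke the standard identification $\O_{X_\G} \simeq \O_\G$ between the Cuntz-Pimsner algebra of $X_\G$ and the Cuntz-Krieger algebra of $\G$.

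The second step is to transport Williams' matrix shift equivalence to a correspondence shift equivalence. Write $A_\G \se A_{\G'}$ via non-negative integer matrices $R$ (indexed by $\G^{(0)}\times \G'^{(0)}$) and $S$ (indexed by $\G'^{(0)}\times\G^{(0)}$) satisfying $A_\G^n = RS$, $A_{\G'}^n = SR$, $A_\G R = R A_{\G'}$ and $S A_\G = A_{\G'} S$. Each such matrix encodes a bipartite graph and hence, by exactly the same construction as above, a correspondence: $X_R$ is an $A_\G^\circ$-$A_{\G'}^\circ$-correspondence and $X_S$ is an $A_{\G'}^\circ$-$A_\G^\circ$-correspondence. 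The engine of the reduction is the functorial dictionary
\[
X_{MN} \approx X_M \otimes X_N
\]
for composable non-negative integer matrices $M,N$, realized on basis vectors by $\xi_e \otimes \xi_f \mapsto \xi_{ef}$ on composable pairs of edges. Applying this dictionary to the four matrix identities above yields
\begin{align*}
& X_\G^{\otimes n} \approx X_R \otimes X_S, \qquad X_{\G'}^{\otimes n} \approx X_S \otimes X_R, \\
& X_\G \otimes X_R \approx X_R \otimes X_{\G'}, \qquad X_S \otimes X_\G \approx X_{\G'} \otimes X_S,
\end{align*}
which is exactly $X_\G \se X_{\G'}$ with lag $n$.

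Finally, Theorem~\ref{T:se gives me} applies to the full, non-degenerate, regular correspondences $X_\G$ and $X_{\G'}$ and gives $\O_{X_\G} \sme \O_{X_{\G'}}$; re-reading through the identification $\O_{X_\G} \simeq \O_\G$ this is the desired strong Morita equivalence $\O_\G \sme \O_{\G'}$. The only step demanding real attention is the matrix/tensor dictionary in the middle paragraph; the rest is organizational. That dictionary is routine but requires care so that the interior tensor product, which collapses over the finite-dimensional middle algebra and reads inner products via range projections, reproduces exactly the $\sum_k M_{ik} N_{kj}$ convolution rule for matrix multiplication.
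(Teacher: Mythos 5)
Your proposal is correct and is exactly the reduction the paper intends: the corollary is stated as an immediate consequence of Theorem~\ref{T:se gives me}, obtained by passing to the graph correspondences (full, non-degenerate and regular precisely because $\G$ is finite with no sinks or sources) and using the dictionary $X_{MN}\approx X_M\otimes X_N$ to convert Williams' matrix shift equivalence into shift equivalence of correspondences. Your verification of the tensor/matrix dictionary and of the standing hypotheses supplies the details the paper leaves implicit.
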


There is also an application to unital injective dynamical systems.

\begin{corollary} \label{dyn sys}
Let $(A,\ga)$ and $(B,\be)$ be unital injective dynamical systems. If $X_\ga \se X_\be$, then $X_{\ga_\infty} \se Y_{\be_\infty}$ and the crossed products $A_\infty \rtimes_{\ga_\infty} \bbZ$ and $B\rtimes_{\be_\infty} \bbZ$ are strong Morita equivalent.
\end{corollary}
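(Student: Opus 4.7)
The plan is to combine the identification of Pimsner dilations for dynamical systems with the shift equivalence transfer theorems already established. First I would verify that the correspondences $X_\ga$ and $X_\be$ fall within the hypotheses of Theorems \ref{T:se infty} and \ref{T:se gives me}. Because $A$ is unital, $X_\ga \approx A$ (viewed as a right Hilbert $A$-module with left action $\phi_{X_\ga}(a)\xi = \ga(a)\xi$) is non-degenerate; its inner product satisfies $\sca{1,1}_A = 1$, so $X_\ga$ is full; the injectivity of $\ga$ gives the injectivity of $\phi_{X_\ga}$, and every element $\phi_{X_\ga}(a)$ acts as the rank-one operator $\Theta_{\ga(a),1} \in \K(X_\ga)$, so $X_\ga$ is regular. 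The same verification applies to $X_\be$. Moreover, by Example \ref{Ex:dil dyn}, the Pimsner dilations $(X_\ga)_\infty$ and $(X_\be)_\infty$ coincide with $X_{\ga_\infty}$ and $X_{\be_\infty}$ respectively, where $(A_\infty, \ga_\infty)$ and $(B_\infty, \be_\infty)$ are the direct limit \emph{automorphic} dynamical systems.

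Next, to establish the relation $X_{\ga_\infty} \se X_{\be_\infty}$, I would apply Theorem \ref{T:se infty} directly: since $X_\ga$ and $X_\be$ are regular and $X_\ga \se X_\be$ with some lag $m$, the Pimsner dilations inherit shift equivalence with the same lag $m$, and the identification $(X_\ga)_\infty \approx X_{\ga_\infty}$ (and its analogue for $\be$) completes the argument.

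Finally, for the strong Morita equivalence of crossed products, I would apply Theorem \ref{T:se gives me} to the full, non-degenerate, regular pair $X_\ga, X_\be$, yielding $\O_{X_\ga} \sme \O_{X_\be}$. It remains to identify these Cuntz-Pimsner algebras with the crossed products appearing in the statement. By Theorem \ref{T:inj}, $\O_{X_\ga} \simeq \O_{X_{\ga_\infty}}$, and since $\ga_\infty$ is an automorphism of $A_\infty$, a well-known identification (see \cite{Sta93} and the references therein) shows that the Cuntz-Pimsner algebra of the correspondence attached to an automorphism is $*$-isomorphic to the ordinary $\ca$-crossed product; hence $\O_{X_{\ga_\infty}} \simeq A_\infty \rtimes_{\ga_\infty} \bbZ$, and similarly $\O_{X_\be} \simeq B_\infty \rtimes_{\be_\infty} \bbZ$. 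The only mildly subtle point in the argument is this final identification of the Cuntz-Pimsner algebra of an automorphic correspondence with the crossed product, but this is standard in the $\ca$-dynamical literature, so no real obstacle is expected.
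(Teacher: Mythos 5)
Your proposal is correct and follows exactly the route the paper intends for this corollary (which it states without proof as an immediate consequence of the preceding results): verify that $X_\ga$ and $X_\be$ are full, non-degenerate and regular, invoke Theorem \ref{T:se infty} together with the identification $(X_\ga)_\infty \approx X_{\ga_\infty}$ from Example \ref{Ex:dil dyn}, and then apply Theorem \ref{T:se gives me} plus the standard isomorphism $\O_{X_{\ga_\infty}} \simeq A_\infty \rtimes_{\ga_\infty} \bbZ$ for automorphic correspondences. All the verifications (e.g.\ $\phi_{X_\ga}(a)=\Theta_{\ga(a),1}$, fullness from unitality) are accurate, so no gap remains.
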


We close this section by settling Conjecture 2 of the Introduction. This conjecture asserts that the vertical arrows in Theorem \ref{T:full} are actually equivalences. We will use the following.

\begin{theorem}\label{T:se not inv}
Let ${}_A E_A$ be a full, non-degenerate and regular $\ca$- correspondence. If  $E \se E_\infty$ then $E$ is an imprimitivity bimodule.
\end{theorem}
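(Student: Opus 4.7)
The plan. Assume $E \se E_\infty$ with lag $m$, implemented by an $A$-$A_\infty$-correspondence $R$ and an $A_\infty$-$A$-correspondence $S$ that satisfy
\[
E^{\otimes m} \approx R \otimes_{A_\infty} S, \quad (E_\infty)^{\otimes m} \approx S \otimes_A R,
\]
\[
S \otimes_A E \approx E_\infty \otimes_{A_\infty} S, \quad E \otimes_A R \approx R \otimes_{A_\infty} E_\infty.
\]
Since $E$ is full, non-degenerate, and regular, the discussion following Theorem~\ref{T:inj} shows that $E_\infty$ is an imprimitivity bimodule over $A_\infty$; hence so is $(E_\infty)^{\otimes m} \approx S \otimes R$.

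The plan splits into three movements. In the first, I aim to promote $R$ to an $A$-$A_\infty$-imprimitivity bimodule. Right fullness over $A_\infty$ is immediate: the identity $\sca{s_1\otimes r_1, s_2\otimes r_2}_{A_\infty} = \sca{r_1, \phi_R(\sca{s_1,s_2}_A)\, r_2}_{A_\infty}$ shows that the ideal $\sca{R,R}_{A_\infty}$ contains $\sca{S\otimes R, S\otimes R}_{A_\infty} = A_\infty$, and so $\sca{R,R}_{A_\infty} = A_\infty$. The nontrivial task is to construct a compatible left $A$-valued inner product on $R$; for this I would transfer the left $A_\infty$-inner product of $E_\infty$ through the equivalence $E\otimes_A R \approx R\otimes_{A_\infty} E_\infty$, using the intrinsic $A$-action on $E$ to produce an $A$-valued pairing, and then verify the Hilbert bimodule axiom and fullness.

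Once $R$ is an imprimitivity bimodule, Proposition~\ref{P:sse gives sme} applied to the elementary strong shift equivalence $E^{\otimes m} \sse (E_\infty)^{\otimes m}$ via $R$ and $S$ gives $E^{\otimes m} \sme (E_\infty)^{\otimes m}$. Let ${}_A M_{A_\infty}$ be a linking imprimitivity bimodule, so that $E^{\otimes m} \otimes M \approx M \otimes (E_\infty)^{\otimes m}$. The right-hand side is a tensor product of imprimitivity bimodules, hence itself imprimitivity; therefore $E^{\otimes m} \otimes M$ is imprimitivity. Tensoring on the right with $M^*$ gives $E^{\otimes m} \approx (E^{\otimes m} \otimes M) \otimes M^*$, again a tensor of imprimitivity bimodules, so $E^{\otimes m}$ is an imprimitivity bimodule.

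The final movement is the descent from $E^{\otimes m}$ to $E$. Regularity of $E$ passes to every tensor power, so Lemma~\ref{L:compact ten pr} makes the map $\otimes\id_{E^{\otimes m-1}}\colon \K(E)\to \K(E^{\otimes m})$ isometric and, in particular, injective. Imprimitivity of $E^{\otimes m}$ gives $\phi_{E^{\otimes m}}(A) = \K(E^{\otimes m})$, and the identity $\phi_{E^{\otimes m}}(a) = \phi_E(a)\otimes \id_{E^{\otimes m-1}}$ forces the image of $\K(E)$ inside $\K(E^{\otimes m})$ to exhaust $\K(E^{\otimes m})$. Hence $\otimes\id_{E^{\otimes m-1}}$ is an isomorphism $\K(E)\to\K(E^{\otimes m})$, and the factorization $\phi_{E^{\otimes m}} = (\otimes \id_{E^{\otimes m-1}}) \circ \phi_E$ shows that $\phi_E\colon A\to \K(E)$ is an isomorphism; that is, $E$ is an imprimitivity bimodule.

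I expect the main obstacle to lie in the first movement, namely equipping $R$ with a full, compatible left $A$-valued inner product. This is where the intertwining relation $E \otimes R \approx R \otimes E_\infty$ and the imprimitivity of $E_\infty$ must be combined most carefully; the remaining two movements are then bookkeeping with Proposition~\ref{P:sse gives sme} and the compact-operator calculus of Lemma~\ref{L:compact ten pr}.
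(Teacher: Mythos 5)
Your first movement contains the essential gap, and as sketched it cannot be completed: you picked the wrong factor to promote. The natural way to show that a factor of a shift equivalence is left full is the compact-operator trick: for $k\in\K(R)$ one would tensor with $\id_S$ to land in $\K(R\otimes S)=\K(E^{\otimes m})$ and then invoke imprimitivity of $E^{\otimes m}$ to pull $k\otimes\id_S$ back to $\phi_R(a)\otimes\id_S$ --- but imprimitivity of $E^{\otimes m}$ is precisely what you are trying to prove, so this is circular. The alternative you suggest, transferring the left inner product of $E_\infty$ through $E\otimes_A R\approx R\otimes_{A_\infty}E_\infty$, has no evident implementation: there is no canonical way to strip the factor $E$ (resp.\ $E_\infty$) off that unitary equivalence so as to produce an $A$-valued pairing on $R$ itself. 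Your right-fullness computation for $R$ is correct, but the left side is where all the work lies, and you have flagged it rather than done it.

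The fix --- and it is exactly what the paper does --- is to promote $S$ instead of $R$. Right fullness of $S$ over $A$ follows from fullness of $E^{\otimes m}=R\otimes S$ by the same inner-product computation you used for $R$. For left fullness, take $k\in\K(S)$; then $k\otimes\id_R\in\K(S\otimes R)=\K((E_\infty)^{\otimes m})$, and since $(E_\infty)^{\otimes m}$ \emph{is already known} to be an imprimitivity bimodule, $k\otimes\id_R=\phi_{(E_\infty)^{\otimes m}}(x)=\phi_S(x)\otimes\id_R$ for some $x\in A_\infty$; regularity of $R$ makes $\otimes\id_R$ injective (Lemma~\ref{L:compact ten pr}), so $k=\phi_S(x)$ and $A_\infty\simeq\K(S)$. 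Once $S$ is an imprimitivity bimodule your second and third movements do go through essentially verbatim (Proposition~\ref{P:sse gives sme} only requires one of the two factors to be an imprimitivity bimodule, and your descent from $E^{\otimes m}$ to $E$ via injectivity of $\otimes\id_{E^{\otimes m-1}}$ is sound), but the paper finishes more directly: the intertwining $S\otimes E\approx E_\infty\otimes S$ gives $E\approx S^*\otimes E_\infty\otimes S$, a tensor product of imprimitivity bimodules, so no passage through $E^{\otimes m}$ is needed at all.
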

\begin{proof}
By assumption there exist non-degenerate, regular $\ca$- correspondences ${}_A R_{A_{\infty}}, {}_{A_{\infty}}S_A$ and a positive integer $m$ such that
\[
E^{\otimes m} =  R \otimes S, \,(E_\infty)^{\otimes m}=  (E^{\otimes m})_\infty =  S\otimes R
\]
and $E \otimes R =  R \otimes E_\infty$, $S \otimes E =  E_\infty \otimes S$. Then
\begin{align*}
A=  \sca{E^{\otimes m}, E^{\otimes m}} =  \sca{R\otimes S, R \otimes S} =  \sca{S, \sca{R,R} S} \subseteq \sca{S,S} \subseteq A,
\end{align*}
since $E^{\otimes m}$ is also full. Hence $S$ is full. Now, let $k \in \K(S)$. Then $k\otimes \id_R \in \K((E_\infty)^{\otimes m})$, and since $(E_\infty)^{\otimes m}$ is an imprimitivity bimodule, there is an $x\in A_\infty$ such that $k\otimes \id_R=\phi_{(E_\infty)^{\otimes m}}(x)= \phi_S(x)\otimes \id_R$. Thus $\phi_S(x)=k$, since $R$ is regular; therefore $A_\infty \simeq \K(S)$, hence $S$ is an imprimitivity bimodule. Going back to the definition of $E \se E_\infty$, we see that $S \otimes E = E_\infty \otimes S$, hence $E= S^* \otimes E_\infty \otimes S$. The fact that $S, S^*$ and $E_\infty$ are imprimitivity bimodules implies that $E$ is also an imprimitivity bimodule.
\end{proof}

\begin{remark}\label{R:conj 2}
Let us see now why Conjecture 2 has a negative answer. Let $E$ be any full, non-degenerate and regular $\ca$-correspondence, which is not an imprimitivity bimodule. For example let $E= X_\ga$ be the $\ca$-correspondence associated to an injective $\ca$-dynamical system $(A,\ga)$, with $\ga$ not onto. If Conjecture 2 was true, then $E \se E_\infty$, since both $E$ and $E_\infty$ have unitarily equivalent Pimsner dilations. But then Theorem \ref{T:se not inv} would imply that $E$ is an imprimitivity bimodule, a contradiction.
\end{remark}

\section{Imprimitivity Bimodules and the Shift Equivalence Problem}\label{S:app}

Our work on shift equivalences suggests the following generalization of the Williams Problem in the context of $\ca$-correspondences.

\medskip

\noindent \textbf{The Shift Equivalence Problem for $\ca$-correspondences:} If $E, F$ are non-degenerate and regular $\ca$-correspondences, then does $E \se F$ imply that $E \tsse F$?

\medskip

One might be tempted to say that the work of Kim and Roush \cite{KimRou99} readily shows that the answer to the above conjecture is negative. However, it is not known to us whether two graph $\ca$-correspondences which fail to be strong shift equivalent \textit{via non-negative integral matrices} remain inequivalent if one considers arbitrary $\ca$-correspondences in order to implement the strong shift equivalence. In other words, we do not know the answer to the Williams Problem even for the class of graph $\ca$-correspondences. If it does have a positive answer then it will provide an alternative route for proving our Corollary \ref{graph}, but we conjecture that it doesn't in general.

Nevertheless, in our next result we show that Shift Equivalence Problem  has an affirmative answer for the class of imprimitivity bimodules. Recall that this result is essential for the proof of Theorem \ref{T:full}.

\begin{theorem}\label{T:sse=se=sme}
Strong Morita equivalence, elementary strong shift equivalence, strong shift equivalence and shift equivalence are equivalent for the class of imprimitivity bimodules.
\end{theorem}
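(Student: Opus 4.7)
The plan is to close the cycle of implications. By Theorem \ref{T:main 1}, the chain $\sme \Rightarrow \sse \Rightarrow \tsse \Rightarrow \se$ holds in full generality, so it remains only to prove $\se \Rightarrow \sme$ for imprimitivity bimodules. Let ${}_A E_A$ and ${}_B F_B$ be imprimitivity bimodules with $E \se F$ of lag $m$, via $R$ and $S$. I will show that $R$ is itself an $A$-$B$-imprimitivity bimodule; once this is established, the compatibility $E \otimes_A R \approx R \otimes_B F$ supplied by Definition \ref{D:se}(ii) gives $E \sme F$ via $R$.

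The relations $E^{\otimes m} \approx R \otimes_B S$ and $F^{\otimes m} \approx S \otimes_A R$ from Definition \ref{D:se}(i) exhibit $R, S$ as an elementary strong shift equivalence $E^{\otimes m} \sse F^{\otimes m}$. Both $E^{\otimes m}$ and $F^{\otimes m}$ are imprimitivity bimodules (tensor products of imprimitivity bimodules), hence in particular full. The fullness of $F^{\otimes m}$ yields
\[
B = \sca{F^{\otimes m}, F^{\otimes m}}_B = \sca{S \otimes R, S \otimes R}_B \subseteq \sca{R, R}_B,
\]
so $R$ is right-full, and symmetrically $S$ is right-full. Moreover, since $\phi_{E^{\otimes m}} = \phi_R \otimes \id_S$ is a $*$-isomorphism onto $\K(E^{\otimes m})$, the map $\phi_R$ is injective (and similarly $\phi_S$).

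The heart of the argument is showing that $\phi_R$ surjects onto $\K(R)$. I will invoke Remark \ref{R:useful}, applied to the elementary strong shift equivalence $E^{\otimes m} \sse F^{\otimes m}$ via $R, S$: it gives that the associated $R_\infty$ is a full left Hilbert bimodule over $A_\infty$ and $B_\infty$. Because $E^{\otimes m}, F^{\otimes m}$ are already imprimitivity bimodules, the connecting maps in the directed systems defining $A_\infty, B_\infty$ and $R_\infty$ are all isomorphisms --- for instance, each finite level $\K((F^{\otimes m})^{\otimes n}, (E^{\otimes m})^{\otimes n} \otimes R) \approx R$, via the identification $\K((F^{\otimes m})^{\otimes n}, Y) \approx Y \otimes_B ((F^{\otimes m})^{\otimes n})^*$ (available since $F^{\otimes m}$ is an imprimitivity bimodule) together with the iterated relation $(E^{\otimes m})^{\otimes n} \otimes R \approx R \otimes (F^{\otimes m})^{\otimes n}$ obtained from $E \otimes R \approx R \otimes F$. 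Thus $A_\infty \approx A$, $B_\infty \approx B$ and $R_\infty \approx R$, so $R$ itself is a full left Hilbert bimodule over $A, B$. This means $R$ carries a left $A$-valued inner product satisfying $\phi_R(\lsca{r_1, r_2}) = \Theta^R_{r_1, r_2}$ with $\lsca{R, R}_A = A$; consequently
\[
\phi_R(A) = \phi_R(\lsca{R, R}_A) = \overline{\Span}\{\Theta^R_{r_1, r_2} : r_1, r_2 \in R\} = \K(R),
\]
and combined with the injectivity of $\phi_R$ we conclude $\phi_R: A \to \K(R)$ is a $*$-isomorphism. Together with $\sca{R, R}_B = B$, this proves $R$ is an $A$-$B$-imprimitivity bimodule.

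The principal obstacle will be verifying precisely that the connecting maps $\otimes \id_{X^{\otimes 2}}$ in the directed system defining $R_\infty$ intertwine the finite-level identifications with $R$ sketched above, so that the direct limit collapses to $R$ in this setting. Once that identification is in place, Remark \ref{R:useful} transfers the full left Hilbert bimodule structure directly to $R$, and the rest of the argument is a formal consequence.
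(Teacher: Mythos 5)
Your skeleton coincides with the paper's: reduce via Theorem \ref{T:main 1} to showing $\se\ \Rightarrow\ \sme$, prove that one of the two intertwining correspondences is an imprimitivity bimodule, and conclude from the intertwining relation of Definition \ref{D:se}(ii). Your right-fullness computation is exactly the paper's (the paper works with $S$ rather than $R$, an immaterial difference). Where you genuinely diverge is the left-fullness step, i.e.\ showing $\phi_R$ maps onto $\K(R)$. The paper does this in two lines with no dilation machinery: given $k\in\K(R)$, regularity of $S$ gives $k\otimes\id_S\in\K(R\otimes_B S)=\K(E^{\otimes m})=\phi_{E^{\otimes m}}(A)$ because $E^{\otimes m}$ is an imprimitivity bimodule, so $k\otimes\id_S=\phi_R(a)\otimes\id_S$ for some $a\in A$, and since $\otimes\,\id_S$ is isometric (Lemma \ref{L:compact ten pr}, $S$ regular) one cancels to get $k=\phi_R(a)$. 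Your route through Remark \ref{R:useful} and the collapse of the directed system defining $R_\infty$ can be made to work --- when $E^{\otimes m}$ and $F^{\otimes m}$ are imprimitivity bimodules the connecting maps are indeed surjective isometries, so $A_\infty\approx A$, $B_\infty\approx B$ and $R_\infty\approx R$ --- but the verification you defer as the ``principal obstacle'' (surjectivity of the first map onto $\K(F^{\otimes m},E^{\otimes m}\otimes R)$ and of $\otimes\,\id_{X^{\otimes 2}}$ at every subsequent level) rests precisely on the identity $\K(M\otimes N)=\K(M)\otimes\id_N$ for $N$ an imprimitivity bimodule, which is the same tensor-and-cancel argument the paper applies directly. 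So the detour costs you the entire apparatus of Section \ref{S:equiv} (bipartite inflation, Proposition \ref{P:technical}, Remark \ref{R:useful}) plus a compatibility check you have not actually carried out, and buys nothing in return; I would replace it with the direct argument above, after which your conclusion $E\sme F$ via $R$ is immediate.
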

\begin{proof}
In view of Theorem \ref{T:main 1}, it suffices to show that if ${}_A E_A$ and ${}_B F_B$ are imprimitivity bimodules such that $E \se F$, then $E\sme F$. Assume that $E\se F$ via $R,S$ with lag $m$. Then $E^{\otimes m}=R\otimes S$ and $F^{\otimes m} = S \otimes R$. Since $E,F$ are imprimitivity bimodules, then $E^{\otimes m}$ and $F^{\otimes m}$ are also imprimitivity bimodules. Hence $S$ is regular and, due to $\se$, intertwines $E$ and $F$. It suffices to prove that $S$ is an imprimitivity bimodule. First, it is full right since
\begin{align*}
A = \sca{E^{\otimes m}, E^{\otimes m}}  = \sca{R \otimes S, R \otimes S} = \sca{S, \sca{R,R}\cdot S}
\subseteq \sca{S, S} \subseteq A,
\end{align*}
thus $\sca{S,S}= A$. In order to prove that it is full left, it suffices to prove that $\phi_S\colon B \rightarrow \L(S)$ is onto $\K(S)$ (since $\phi_S(B) \subseteq \K(S)$, by regularity of $S$). To this end, let $k\in \K(S)$. Then, due to regularity of $R$, we obtain that $k\otimes \id_R \in \K(S\otimes R)= \K(F^{\otimes m})$. Since $F^{\otimes m}$ is an imprimitivity bimodule, there is a $b\in B$ such that $\phi_{F^{\otimes m}}(b) = k\otimes \id_R$; in particular $\phi_S(b)\otimes \id_R = k\otimes \id_R$. Thus $\phi_S(b)=k$,
since $R$ is regular.
\end{proof}

Combining \cite[Theorem 3.2, Theorem 3.5]{MuhSol00} with Theorem \ref{T:sse=se=sme} we obtain the following corollary.

\begin{corollary}
If ${}_A E_A, {}_B F_B$ are imprimitivity bimodules and $E \sme F, E\sse F, E\tsse F$ or $E \se F$, then the corresponding Toeplitz-Cuntz-Pimsner algebras and Cuntz-Pimsner algebras are strong Morita equivalent as $\ca$-algebras, and the corresponding tensor algebras are strong Morita equivalent in the sense of \cite{BMP00}.
\end{corollary}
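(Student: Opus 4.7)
My plan is to close the implication cycle by proving $E \se F \Rightarrow E \sme F$ for imprimitivity bimodules ${}_A E_A$ and ${}_B F_B$, since Theorem \ref{T:main 1} already supplies $\sme \Rightarrow \sse \Rightarrow \tsse \Rightarrow \se$ without any hypothesis on $E$ and $F$. Suppose $E \se F$ with lag $m$ via ${}_A R_B$ and ${}_B S_A$, so that $E^{\otimes m} \approx R \otimes_B S$, $F^{\otimes m} \approx S \otimes_A R$, $S \otimes_A E \approx F \otimes_B S$, and $E \otimes_A R \approx R \otimes_B F$. Because $E$ and $F$ are imprimitivity bimodules, so are $E^{\otimes m}$ and $F^{\otimes m}$.

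The entire argument reduces to showing that $S$ itself is a $B$-$A$-imprimitivity bimodule. If this holds, then $S^*$ is an $A$-$B$-imprimitivity bimodule with $S^* \otimes_B S \approx A$ and $S \otimes_A S^* \approx B$ by Lemma \ref{L:imprimitivity}. Tensoring the intertwiner $S \otimes_A E \approx F \otimes_B S$ on the right by $S^*$ then yields $E \otimes_A S^* \approx S^* \otimes_B F$, which is exactly $E \sme F$ via $S^*$.

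To verify that $S$ is an imprimitivity bimodule I would establish right and left fullness separately. The right fullness $\sca{S,S}_A = A$ is immediate, as
\begin{align*}
A = \sca{R \otimes_B S, R \otimes_B S}_A = \sca{S, \phi_S(\sca{R,R}_B) S}_A \subseteq \sca{S,S}_A \subseteq A.
\end{align*}
For left fullness I would show that $\phi_S \colon B \to \L(S)$ is injective with image $\K(S)$. Injectivity is clean: $\phi_{F^{\otimes m}}(b) = \phi_S(b) \otimes \id_R$, and $\phi_{F^{\otimes m}}$ is injective since $F^{\otimes m}$ is imprimitivity. For surjectivity onto $\K(S)$, take $k \in \K(S)$; then $k \otimes \id_R$ lies in $\K(S \otimes_A R) = \K(F^{\otimes m})$, so by imprimitivity of $F^{\otimes m}$ there exists $b \in B$ with $\phi_S(b) \otimes \id_R = k \otimes \id_R$, and the isometric property supplied by Lemma \ref{L:compact ten pr} then forces $\phi_S(b) = k$.

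The main technical obstacle is that both of the last two steps require regularity of $R$ (to apply Lemma \ref{L:compact ten pr} and to identify $k \otimes \id_R$ as a compact operator on $S \otimes_A R$). Regularity of $R$ and of $S$ must be bootstrapped from the fact that $E^{\otimes m} = R \otimes_B S$ and $F^{\otimes m} = S \otimes_A R$ are imprimitivity bimodules: injectivity of $\phi_R$ and $\phi_S$ comes from the injectivity of $\phi_{E^{\otimes m}}$ and $\phi_{F^{\otimes m}}$ as above, while the inclusions $\phi_R(A) \subseteq \K(R)$ and $\phi_S(B) \subseteq \K(S)$ are the delicate point that must be extracted symmetrically from the identities $\phi_{E^{\otimes m}}(a) = \phi_R(a) \otimes \id_S \in \K(R \otimes S)$ and $\phi_{F^{\otimes m}}(b) = \phi_S(b) \otimes \id_R \in \K(S \otimes R)$ using the right fullness of the factors established above. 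With regularity of $R$ and $S$ in hand, the scheme outlined above completes the proof.
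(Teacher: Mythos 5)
Your reduction of all four relations to $E \sme F$ is correct and is essentially the paper's own proof of the immediately preceding Theorem \ref{T:sse=se=sme}: right fullness of $S$ via $A = \sca{R\otimes S, R\otimes S} \subseteq \sca{S,S}$, and left fullness by lifting $k \in \K(S)$ to $k \otimes \id_R \in \K(F^{\otimes m}) = \phi_{F^{\otimes m}}(B)$ and then cancelling $\otimes\, \id_R$ using regularity of $R$. Since that theorem is already available, you could simply have cited it rather than re-proving it. (A small slip along the way: to pass from $S \otimes_A E \approx F \otimes_B S$ to $E \otimes_A S^* \approx S^* \otimes_B F$ you must tensor by $S^*$ on both sides; tensoring only on the right gives $S \otimes_A E \otimes_A S^* \approx F$, which is not yet the intertwining relation you need.)

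The genuine gap is that your argument stops at $E \sme F$ and never reaches the actual conclusion of the corollary, which concerns the associated operator algebras: that $\T_E$ and $\T_F$, and $\O_E$ and $\O_F$, are strong Morita equivalent as $\ca$-algebras, and that the tensor algebras $\T_E^+$ and $\T_F^+$ are strong Morita equivalent in the sense of \cite{BleMuPau00}. Passing from strong Morita equivalence of the correspondences to strong Morita equivalence of these three algebras is itself a substantial theorem --- it is precisely Theorems 3.2 and 3.5 of Muhly and Solel \cite{MuhSol00}, which is the ingredient the paper combines with Theorem \ref{T:sse=se=sme} to obtain the corollary. Without invoking (or proving) that implication, what you have established is Theorem \ref{T:sse=se=sme} again, not the statement in question.
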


\section{Other Applications}

\subsection{Extension of \cite[Theorem 3.14]{MPT08}}

A natural question raised in \cite{MPT08} was whether \cite[Theorem 3.14]{MPT08} is valid without the extra assumption of non-degeneracy. This can be established now with the theory we have developed in Section \ref{S:equiv}.
However, one cannot dispose of regularity, as mentioned explicitly in \cite{MPT08} and \cite[Example 5.4]{Bat02}.

\begin{theorem}\label{T:main 6}
Let ${}_A E_A$ and ${}_B F_B$ be regular $\ca$-correspondences. If $E\tsse F$ then $\O_E \sme \O_F$.
\end{theorem}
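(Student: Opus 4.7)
\emph{Plan.} Since strong Morita equivalence is transitive, by induction on the length of the strong shift equivalence chain $E = T_0 \sse T_1 \sse \cdots \sse T_n = F$, it suffices to establish the conclusion in the elementary case. Assume therefore that $E \sse F$ via correspondences $R, S$, with $E$ and $F$ regular but possibly degenerate. The strategy is to lift the problem to the Pimsner dilations, where non-degeneracy becomes automatic, and then invoke the non-degenerate version of the statement \cite[Theorem 3.14]{MuPasTom08}.

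By Theorem \ref{T:main 2}, elementary strong shift equivalence passes to Pimsner dilations, yielding $E_\infty \sse F_\infty$ via $R_\infty, S_\infty$. The dilations $E_\infty$ and $F_\infty$ are essential Hilbert bimodules by Theorem \ref{T:inj}, and hence non-degenerate: from the Hilbert bimodule identity $\phi_{E_\infty}(\lsca{\xi,\eta})\zeta = \xi \sca{\eta,\zeta}$ and the fact that any approximate identity of $\sca{E_\infty, E_\infty}$ is a right c.a.i.\ for $E_\infty$, one deduces that $\overline{\phi_{E_\infty}(A_\infty) E_\infty} = E_\infty$, and similarly for $F_\infty$. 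Moreover, because $E$ is regular, the direct-limit description of the Pimsner dilation spelled out immediately before Theorem \ref{T:inj} identifies $E_\infty$ with $\varinjlim (\K(E^{\otimes n}, E^{\otimes n+1}), \otimes \id_E)$ as a correspondence over $A_\infty \simeq \varinjlim (\K(E^{\otimes n}), \otimes \id_E)$, with left action given by composition. Since the composition of compact operators is compact, this forces $\phi_{E_\infty}(A_\infty) \subseteq \K(E_\infty)$, so $E_\infty$ is regular; the same analysis applies to $F_\infty$.

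We may therefore invoke the non-degenerate statement \cite[Theorem 3.14]{MuPasTom08} applied to $E_\infty \sse F_\infty$ to conclude that $\O_{E_\infty} \sme \O_{F_\infty}$. Combining this with the isomorphisms $\O_E \simeq \O_{E_\infty}$ and $\O_F \simeq \O_{F_\infty}$ from Theorem \ref{T:inj} then yields $\O_E \sme \O_F$, completing the proof. The main technical step, and the only place where the retained regularity hypothesis is used critically after dropping non-degeneracy, is the verification that the Pimsner dilations are regular; once this is in hand, everything else is a straightforward bookkeeping argument combining Theorems \ref{T:main 2} and \ref{T:inj} with the non-degenerate Muhly--Pask--Tomforde result.
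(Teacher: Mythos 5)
Your proposal is correct and follows essentially the same route as the paper: reduce to the elementary case $E\sse F$ by transitivity of strong Morita equivalence, pass to Pimsner dilations via Theorem \ref{T:main 2}, observe that $E_\infty$ and $F_\infty$ are non-degenerate (and regular) so that \cite[Theorem 3.14]{MuPasTom08} applies, and transfer back through $\O_E\simeq\O_{E_\infty}$, $\O_F\simeq\O_{F_\infty}$ from Theorem \ref{T:inj}. The only difference is that you spell out the non-degeneracy and regularity of the dilations, which the paper simply asserts as known properties of regular Hilbert bimodules.
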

\begin{proof}
First suppose that $E\sse F$. By Theorem \ref{T:main 2} we have $E_\infty \sse F_\infty$. But $E_\infty$ and $F_\infty$ are regular Hilbert bimodules hence they are non-degenerate. Therefore \cite[Theorem 3.14]{MPT08} implies $\O_{E_\infty} \sme \O_{F_{\infty}}$ and Theorem \ref{T:inj} completes the proof in the case of elementary strong shift equivalence.

Let $E\tsse F$ be via a sequence of $T_i$, $i=0,\dots, n$. Then  $\O_{T_i} \sme \O_{T_{i+1}}$, by the previous arguments, for every $i=0,\dots, n-1$. Strong Morita equivalence of $\ca$-algebras is transitive, hence $\O_E =\O_{T_0} \sme \O_{T_n}=\O_F$.
\end{proof}

\subsection{\cite[Proposition 4.2]{MPT08} Revisited}\label{Ss:strict}

The results in \cite{MPT08} and here, concerning strong Morita equivalence of the Cuntz-Pimsner algebras, can be generalized for degenerate correspondences over unital $\ca$- algebras because of \cite[Proposition 4.2]{MPT08}, i.e., if $X$ is a correspondence over a unital $\ca$-algebra $A$, then $\O_{X_{ess}}$ is a full corner of $\O_X$, where $X_{ess}:=\overline{\phi_X(A)X}$. In fact the proofs in \cite[Proposition 4.2]{MPT08} apply in general for strict correspondences $X$. The key observation is that, if $(a_i)$ is a c.a.i. in $A$, then $\phi_X(a_i)$ converges in the s*-topology to a projection, say $p$, in $\L(X)$. As a consequence $J_X= J_{X_{ess}}$ and, if $X$ is regular, then so is $X_{ess}$, in the same way as in \cite[Proposition 4.1]{MPT08}.

\begin{theorem}\label{T:strict}
Let ${}_A X_A$ be a strict $\ca$-correspondence. Then $\O_{X_{ess}}$ is a full corner of $\O_X$.
\end{theorem}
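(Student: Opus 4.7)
The plan is to adapt the argument of \cite[Proposition 4.2]{MuPasTom08} from the unital case to the strict case, with the unit of $A$ replaced by a projection $P \in M(\O_X)$ obtained as the strict limit of $\pi(a_i)$, where $(a_i)$ is a contractive approximate identity of $A$ and $(\pi,t)$ is the universal covariant representation of $X$. The authors' preamble provides the crucial input: $\phi_X(a_i)\to p$ in the s*-topology of $\L(X)$, where $p$ is the orthogonal projection of $X$ onto $X_{ess}$. In particular, $p$ commutes with $\phi_X(A)$, and $\phi_X(A)$ annihilates $\ker p$. A routine check using this convergence shows that $(\pi(a_i))$ is strictly Cauchy in $M(\O_X)$, with limit a projection $P$ satisfying $P\pi(a) = \pi(a)P = \pi(a)$, $t(\xi)P = t(\xi)$ and $Pt(\xi) = t(p\xi)$.

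The second step is to show that $(\pi, t|_{X_{ess}})$ is a covariant representation of the subcorrespondence $X_{ess}$ viewed over $A$. The identity $\phi_X(a) = p\phi_X(a)p$, valid for all $a\in A$, combined with the decomposition $X = X_{ess} \oplus \ker p$, explains the authors' assertion $J_X = J_{X_{ess}}$; to verify covariance, approximate $\phi_X(a)$, $a\in J_X$, by $\sum \Theta_{\xi_i, \eta_i} \in \K(X)$, flank by $p$, and compute
\[
\psi_{t|_{X_{ess}}}(\phi_{X_{ess}}(a)) = \lim t(p\xi_i) t(p\eta_i)^* = P\psi_t(\phi_X(a))P = P\pi(a)P = \pi(a).
\]
Since the gauge action of $\O_X$ restricts to the $\ca$-algebra generated by $\pi(A)$ and $t(X_{ess})$, the gauge-invariant uniqueness theorem produces an injective $*$-homomorphism $\rho\colon \O_{X_{ess}} \to P\O_X P$.

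Surjectivity of $\rho$ onto $P\O_X P$ rests on the balance identity $\xi \otimes \eta = \xi \otimes p\eta$ in $X\otimes_A X$, valid for $\xi \in X_{ess}$ and $\eta\in X$, which follows from $\phi_X(A)\ker p = 0$ since $\nor{\xi \otimes (\eta - p\eta)}^2 = \sca{\eta-p\eta, \phi_X(\sca{\xi,\xi})(\eta-p\eta)} = 0$. Iterating, $p\xi_1 \otimes \xi_2 \otimes \cdots \otimes \xi_n = p\xi_1 \otimes p\xi_2 \otimes \cdots \otimes p\xi_n$ lies in $X_{ess}^{\otimes n}$, so $Pt^n(\bar\xi) t^m(\bar\eta)^* P$ equals $t|_{X_{ess}}^n(p\bar\xi)t|_{X_{ess}}^m(p\bar\eta)^*$ and hence lies in $\rho(\O_{X_{ess}})$; these products span $P\O_X P$. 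Fullness of $P$ is then immediate: the ideal $\overline{\O_X P \O_X}$ contains $\pi(a_i) = \pi(a_i)P$ for every $i$ and so contains $\pi(A)$; it also contains $t(\xi) = t(\xi)P$ and $t(\xi)^* = Pt(\xi)^*$, hence every generator of $\O_X$.

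The main technical obstacle I anticipate is the covariance verification on $J_X$: one must argue that $\phi_{X_{ess}}(a)\in \K(X_{ess})$ for every $a\in J_X$, and that the rank-one approximation of $\phi_X(a)$ in $\K(X)$ transfers, after flanking by $p$, to an approximation of $\phi_{X_{ess}}(a)$ in $\K(X_{ess})$. Both hinge on the identity $\phi_X(a) = p\phi_X(a)p$, which in turn requires $p$ to be an honest adjointable projection on $X$ --- precisely what the strictness hypothesis, via s*-convergence of $\phi_X(a_i)$, provides.
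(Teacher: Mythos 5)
Your proposal is correct and follows essentially the same route as the paper: the paper's proof consists precisely of fixing an injective covariant representation admitting a gauge action, defining $P$ as the strict limit of $\pi(a_i)$ with $Pt(\xi)=t(p\xi)$, and then deferring to the argument of \cite[Proposition 4.2]{MuPasTom08}, which is exactly the argument you spell out (covariance of $(\pi,t|_{X_{ess}})$ via $\phi_X(a)=p\phi_X(a)p$, gauge-invariant uniqueness, the balancing identity $\xi\otimes\eta=\xi\otimes p\eta$ for surjectivity onto $P\O_XP$, and fullness from $t(\xi)P=t(\xi)$). Your write-up simply makes explicit the details the paper outsources to \cite{MuPasTom08}.
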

\begin{proof}
Fix a covariant injective representation $(\pi,t)$ of $X$ (that admits a gauge action); then $(\pi,t|_{X_{ess}})$ is a covariant injective representation of $X_{ess}$ (that admits a gauge action). Let $P$ be the projection in $\M(\O_X)$ that is defined by $\lim_i \pi(a_i)$ for a c.a.i. $(a_i)$ of $A$; for example
\[
Pt(\xi):= \lim_i \pi(a_i) t(\xi)= t(\lim_i \phi(a_i)\xi)=t(p\xi), \foral \xi
\in X.
\]
Then, the rest of the proof goes as the proof of \cite[Proposition 4.2]{MPT08}.
\end{proof}

\begin{acknow}
The first author would like to thank his wife Maritina for her endless support and understanding.
\end{acknow}


\end{document}